\numberwithin{equation}{section}
\newtheorem{theorem}[equation]{Theorem}
\newtheorem{proposition}[equation]{Proposition}
\newtheorem{lemma}[equation]{Lemma}
\newtheorem{corollary}[equation]{Corollary}
\theoremstyle{definition}
\newtheorem{rmk}[equation]{Remark}
\newenvironment{remark}[1][]{\begin{rmk}[#1] \pushQED{\qed}}{\popQED \end{rmk}}
\newtheorem{eg}[equation]{Example}
\newenvironment{example}[1][]{\begin{eg}[#1] \pushQED{\qed}}{\popQED \end{eg}}
\newtheorem{defnaux}[equation]{Definition}
\newenvironment{definition}[1][]{\begin{defnaux}[#1]\pushQED{\qed}}{\popQED \end{defnaux}}
\newcommand{\bA}{\mathbf{A}}
\newcommand{\bC}{\mathbf{C}}
\newcommand{\cC}{\mathcal{C}}
\newcommand{\fC}{\mathfrak{C}}
\newcommand{\cD}{\mathcal{D}}
\newcommand{\fD}{\mathfrak{D}}
\newcommand{\bF}{\mathbf{F}}
\newcommand{\bG}{\mathbf{G}}
\newcommand{\bH}{\mathbf{H}}
\newcommand{\bN}{\mathbf{N}}
\newcommand{\bQ}{\mathbf{Q}}
\newcommand{\bR}{\mathbf{R}}
\newcommand{\fS}{\mathfrak{S}}
\newcommand{\fT}{\mathfrak{T}}
\newcommand{\bV}{\mathbf{V}}
\newcommand{\bZ}{\mathbf{Z}}
\newcommand{\arxiv}[1]{\href{http://arxiv.org/abs/#1}{{\tiny\tt arXiv:#1}}}
\newcommand{\DOI}[1]{\href{http://doi.org/#1}{\color{purple}{\tiny\tt DOI:#1}}}
\newcommand{\myuline}[1]{%
  \uline{\phantom{#1}}%
  \llap{\contour{white}{#1}}%
}
\DeclareMathOperator{\uRep}{\text{\myuline{\rm Rep}}}
\DeclareMathOperator{\uPerm}{\ul{Perm}}
\DeclareMathOperator{\udeg}{\text{\myuline{\rm deg}}}
\let\ul\underline
\let\wt\widetilde
\renewcommand{\phi}{\varphi}
\DeclareMathOperator{\End}{End}
\DeclareMathOperator{\Ind}{Ind}
\DeclareMathOperator{\Aut}{Aut}
\DeclareMathOperator{\Hom}{Hom}
\DeclareMathOperator{\Pic}{Pic}
\DeclareMathOperator{\Mod}{Mod}
\DeclareMathOperator{\Sym}{Sym}
\DeclareMathOperator{\uHom}{\ul{Hom}}
\DeclareMathOperator{\uEnd}{\ul{End}}
\DeclareMathOperator{\udim}{\ul{dim}}
\DeclareMathOperator{\Sp}{\mathbf{Sp}}
\DeclareMathOperator{\Br}{Br}
\DeclareMathOperator{\Rep}{Rep}
\newcommand{\id}{\mathrm{id}}
\newcommand{\GL}{\mathbf{GL}}
\newcommand{\PGL}{\mathbf{PGL}}
\newcommand{\defn}[1]{\emph{#1}}
\newcommand{\bone}{\mathbf{1}}
\newcommand{\op}{\mathrm{op}}
\newcommand{\uotimes}{\mathbin{\ul{\otimes}}}
\title[Interpolation and Azumaya algebras]{Interpolation of the oscillator representation \\ and Azumaya algebras in tensor categories}
\author{Andrew Snowden}
\address{Department of Mathematics, University of Michigan, Ann Arbor, MI, USA}
\email{\href{mailto:asnowden@umich.edu}{asnowden@umich.edu}}
\urladdr{\url{http://www-personal.umich.edu/~asnowden/}}
\thanks{The author was supported by NSF grant DMS-2301871.}
\date{July 31, 2024}
\begin{document}

\begin{abstract}
Let $\fC$ be a symmetric tensor category and let $A$ be an Azumaya algebra in $\fC$. Assuming a certain invariant $\eta(A) \in \Pic(\fC)[2]$ vanishes, and fixing a certain choice of signs, we show that there is a universal tensor functor $\Phi \colon \fC \to \fD$ for which $\Phi(A)$ splits. We apply this when $\fC=\uRep(\Sp_t(\bF_q))$ is the interpolation category of finite symplectic groups and $A$ is a certain twisted group algbera in $\fC$, and we show that the splitting category $\fD$ is S.\ Kriz's interpolation category of the oscillator representation. This construction has a number advantages over previous ones; e.g., it works in non-semisimple cases. It also brings some conceptual clarity to the situation: the existence of Kriz's category is tied to the non-triviality of the Brauer group of $\fC$.
\end{abstract}

\maketitle
\tableofcontents

\section{Introduction}

\subsection{Overview}

Pre-Tannakian categories are a natural class of tensor categories generalizing representation categories of algebraic groups. Constructing new examples is an important and challenging problem. The first examples\footnote{We ignore the Verlinde category and related examples in this discussion.} going beyond the (super) Tannakian world were the interpolation categories $\uRep(\GL_t)$ and $\uRep(\fS_t)$ of Deligne--Milne \cite{DeligneMilne} and Deligne \cite{Deligne}. Knop \cite{Knop1, Knop2} systematized the interpolation idea, and introduced important new examples such as $\uRep(\GL_t(\bF_q))$.

In joint work with Harman \cite{repst}, we defined a tensor category $\uPerm(G, \mu)$ associated to an oligomorphic group $G$ and a measure $\mu$, which sometimes admits a pre-Tannakian envelope. Our construction recovers all previously known interpolation categories of finite groups. It also leads to some fundamentally new examples, such as the Delannoy category \cite{line, circle}. Moreover, we show that every ``discrete'' pre-Tannakian category is obtained from our construction \cite{discrete}; discrete roughly means that there are enough \'etale algebras. The interpolation categories of algebraic groups, like $\uRep(\GL_t)$, are not discrete, and thus do not come from an oligomorphic group; however, we hypothesize that they come from an analogous theory for ``algebraic oligomorphic groups'' \cite{homoten}.

Recently, S.~Kriz \cite{Kriz} introduced a new and very intriguing pre-Tannakian category by interpolating the oscillator representation of finite symplectic groups. This category is not discrete (as she proves), but is very close to being so. It thus the first example we know that cannot come from the envisioned algebraic oligomorphic theory---at least, not directly.

This paper is an attempt to better understand Kriz's category, and where it fits in the broader picture. Our main result is that this category is the ``splitting category'' of a certain Azumaya algebra in the ordinary interpolation category $\uRep(\Sp_t(\bF))$ of finite symplectic groups. This construction of the category has a few advantages over previous ones, and also (in a sense) explains this category: it exists because the Brauer group of $\uRep(\Sp_t(\bF))$ is non-trivial.

\subsection{Splitting categories}

Fix a field $k$, which we assume here to be algebraically closed for simplicity. Let $\fC$ be a symmetric $k$-linear tensor category (Definition~\ref{defn:tencat}) and let $A$ be an Azumaya algebra in $\fC$ (\S \ref{ss:azumaya}). We would like to construct an extension of $\fC$ in which $A$ is split, i.e., of the form $\uEnd(X)$ for some rigid object $X$ of $\fC$; moreover, we would like this extension to be uniquely determined by some reasonably property.

There are two issues. First, there is an invariant $\eta(A) \in \Pic(\fC)[2]$ that must vanish for such an extension to exist. So supose this is the case. The second issue is that we cannot uniquely pin down the object $X$, in general. For example, in the category of super vector spaces, we have $\uEnd(X)=\uEnd(X[1])$, where $[1]$ is the parity shift functor.

We introduce a notion that neatly deals with both of the above issues. An \defn{S-algebra}\footnote{For ``splittable algebra.''} in $\fC$ is an algebra $A$ equipped with a linear map $\epsilon \colon A \to \bone$ and an invariant element $\pi \in A \otimes A$ satisfying some conditions (see Definition~\ref{defn:Salg}). The primary example is $A=\uEnd(X)$, where $\epsilon$ is the trace map and $\pi$ is the symmetry of $X \otimes X$. We show that an algebra $A$ admits an S-structure if and only if it is an Azumaya algebra with $\eta(A)=1$. Moreover, the choice of S-structure on $A$ precisely eliminates the ambiguity of $X$ mentioned above. S-algebras have one other advantage over general Azumaya algebras, in that their definition is more elementary and purely diagrammatic.

Let $A$ be an S-algebra in $\fC$. A \defn{splitting category} for $A$ is a symmetric tensor functor $\Phi \colon \fC \to \fD$ together with an isomorphism of S-algberas $\Phi(A) \to \uEnd(X)$ for some rigid object $X$ of $\fD$. We say a splitting category is \defn{universal} if it is initial in the 2-category of splitting categories. The following is our main theorem on these objects:

\begin{theorem} \label{mainthm}
A separable S-algebra admits a universal splitting category.
\end{theorem}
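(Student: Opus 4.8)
The plan is to construct the universal splitting category explicitly as a category of modules over $A$ in a suitable sense, and then verify the universal property by hand. The basic idea: if $\Phi \colon \fC \to \fD$ splits $A$ as $\uEnd(X)$, then $X$ should correspond to a ``rank-one right $A$-module'' in the sense appropriate to the S-structure. Concretely, I would first consider the category $\fC_A$ of right $A$-modules in $\fC$. This is a module category over $\fC$, but not itself a tensor category. However, the S-structure $(\epsilon, \pi)$ equips $\fC_A$ with extra data: $\pi \in A \otimes A$ induces, for $A$-modules $M, N$, a canonical map relating $M \otimes N$ to $N \otimes M$, and the counit $\epsilon$ lets one define an ``internal tensor product'' $M \otimes_A N$. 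The first key step is to show that $\fC_A$, equipped with $\otimes_A$ and the symmetry coming from $\pi$, is a symmetric tensor category, with $A$ itself as the unit object, and that $A$-as-a-right-module-over-itself becomes $\uEnd(\bone) = \bone$ after this reinterpretation — wait, more precisely, that the free module $A$ plays the role of $X$ with $\uEnd_{\fC_A}(A)$ recovering (the image of) $A$. Separability of $A$ is exactly what guarantees that $\otimes_A$ is exact and that $\fC_A$ inherits enough structure (e.g. that $A$ is projective as a bimodule over itself, so the relevant idempotents exist); this is where that hypothesis gets used.

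The second step is to produce the tensor functor $\Phi \colon \fC \to \fC_A$, which should be the ``free module'' functor $Y \mapsto Y \otimes A$, and to check that it is symmetric monoidal: $(Y \otimes A) \otimes_A (Z \otimes A) \cong (Y \otimes Z) \otimes A$, compatibly with symmetries, using $\pi$ and the algebra structure. Then one checks $\Phi(A) = A \otimes A$ acquires, via the multiplication $A \otimes A \to A$ and the S-data, an isomorphism to $\uEnd_{\fC_A}(X)$ where $X$ is the free rank-one module; unwinding the definition of S-algebra (Definition~\ref{defn:Salg}) should make this essentially formal, since the defining diagrams of an S-algebra are precisely engineered to match the axioms for $\uEnd(X)$ with its trace and symmetry. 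So $\Phi$ is a splitting category.

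The third and most substantial step is the universal property: given any splitting category $\Psi \colon \fC \to \fE$ with $\Psi(A) \cong \uEnd(X)$, I must produce an essentially unique tensor functor $G \colon \fC_A \to \fE$ with $G \circ \Phi \cong \Psi$ compatibly with the S-algebra isomorphisms. The construction of $G$: a right $A$-module $M$ in $\fC$ maps to $\Psi(M)$, which is a right $\Psi(A) = \uEnd(X)$-module in $\fE$; by Morita theory in $\fE$ (an Azumaya, indeed split, algebra $\uEnd(X)$ has module category equivalent to $\fE$ via $N \mapsto N \otimes X$, with quasi-inverse $M \mapsto \uHom_{\uEnd(X)}(X, M)$ or equivalently $M \otimes_{\uEnd(X)} X$), this right module corresponds to an object $G(M) \in \fE$. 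One then checks $G$ is symmetric monoidal for $\otimes_A$ — this uses that the S-structure is carried along — and that it sends the free module $A$ to (an object isomorphic to) $X$, giving $G \circ \Phi \cong \Psi$. Uniqueness of $G$ up to unique isomorphism follows because any such $G$ must send $\Phi(Y) = Y\otimes A$ to $\Psi(Y)$ and send $X \in \fC_A$ to the distinguished splitting object, and every object of $\fC_A$ is a summand of one of the form $\Phi(Y) \otimes_A X$, pinning $G$ down.

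The main obstacle I expect is the first step: verifying that $(\fC_A, \otimes_A)$ really is a symmetric tensor category in the precise sense of Definition~\ref{defn:tencat} — in particular checking rigidity of objects, exactness of $\otimes_A$, and that the symmetry built from $\pi$ satisfies the hexagon and $c \circ c = \id$ — and correctly identifying which objects of $\fC_A$ are rigid (not all of them need be; one likely passes to a subcategory, e.g. the idempotent completion of the full subcategory on summands of free modules, or the rigid objects). Getting the bookkeeping of the S-algebra axioms to line up cleanly with the $\uEnd(X)$-axioms, so that these verifications are genuinely mechanical rather than hiding a subtlety, is the delicate part; this is presumably why the paper isolates the notion of S-algebra in the first place, since its diagrammatic definition should make all of this a (lengthy but) routine string-diagram computation once set up correctly.
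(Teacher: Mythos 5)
Your approach has a structural flaw that already fails in the simplest example. The universal splitting category must carry a $\bZ$-grading with $\Phi$ an equivalence from $\fC$ onto the degree-$0$ piece $\fD_0$, and the splitting object $X$ living in degree~$1$ (this is Theorem~\ref{thm:split}(a)); in particular, $\fD$ is strictly larger than $\fC$. Your $\fC_A$ is not: take the S-algebra $A = \bone$ with $\pi = \epsilon = 1$. Then $\fC_A = \Mod_{\bone} = \fC$ and your $\Phi$ is the identity, giving the splitting category $(\fC, \id, \bone)$. This is \emph{not} universal, since the mapping property would force any invertible object $Y$ of sign $+1$ in any other splitting category to be $\cong \bone$; this fails already in $\fT = \fC^{\bZ}$ with $Y = \bone[1]$. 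The correct universal splitting category here is $\fC^{\bZ}$. Relatedly, your construction conflates the monoidal unit and the splitting object: both are the free rank-one module $A$, so $\uEnd(X) = \uEnd(\bone) \cong \bone$, and the required isomorphism $\uEnd(X) \cong \Phi(A)$ fails whenever $A \ne \bone$ (the categorical degrees already disagree). Your Morita-theoretic universality argument inherits the same defect: chasing through, your $G$ sends $\Phi(Z) = Z \otimes A$ to $\Psi(Z) \otimes Y$ rather than $\Psi(Z)$, so it does not preserve the unit and cannot be the desired tensor functor.

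There is also a problem with $\otimes_A$ itself: for a noncommutative $A$, forming $M \otimes_A N$ for two right $A$-modules requires converting one of them to a left module, and an S-algebra does not in general carry an anti-involution to do this (when it does, $A^{\otimes 2}$ is already S-split, Proposition~\ref{prop:anti-inv}). What the paper actually does is a graded crossed-product style construction: form the $\bN^2$-graded category $\tilde{\fD} = \bigoplus_{n,m} \Mod^{\rm relp}_{A^{\otimes n} \otimes (A^{\op})^{\otimes m}}$, equipped with an explicit symmetric monoidal structure in which the element $\pi$ corrects the failure of the naive symmetry of $\fC$ to be module-linear, then invert and trivialize the canonical degree-$(1,1)$ object $E$ (namely $A$ as an $A$-bimodule) to collapse the bigrading to a $\bZ$-grading. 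The splitting object $X$ is $A$ placed in degree $(0,1)$, its dual is $A$ in degree $(1,0)$, and $\uEnd(X) = X \otimes X^*$ lands in degree~$0$, where one checks directly that it is $\cong A$ as an S-algebra. Universality is then proved not by Morita theory in the target but by functoriality of the whole construction in $\fC$ together with the explicit identification $\fD \cong \fC^{\bZ}$ in the S-split case. Your intuition that $X$ should ``be'' the rank-one free $A$-module is correct, but that module has to be placed in a nontrivial graded piece of a larger category, not taken as the unit of $\Mod_A$ itself.
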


See \S \ref{ss:separable} for the definition of separable; we note that if $\udim(A) \ne 0$ or $\fC$ is semi-simple then separability is automatic. In fact, we prove more precise results. For instance, the universal splitting category $\fD$ is $\bZ$-graded, and its degree~0 piece is $\fC$ itself, and we show that many properties of $\fC$ (such as abelian) are inherited by $\fD$. We also discuss a variant (\S \ref{ss:msplit}) that gives a $\bZ/m$-graded category instead.

We mention one particularly interesting example (see in \S \ref{ss:pgl} for details). Let $\fD$ be the Deligne--Milne interpolation category $\uRep(\GL_t)$ (with $t \ne 0$), and let $\fC=\uRep(\PGL_t)$ be its degree~0 piece. The endomorphism algebra $A$ of the standard representation of $\GL_t$ is an S-algebra in $\fC$, and $\fD$ is its universal splitting category. Now suppose that $A'$ is an S-algebra in an arbitrary tensor category $\fC'$. Then there is a unique tensor functor $\fC \to \fC'$ mapping $A$ to $A'$; this is the universal property of $\fC$. The universal splitting category $\fD'$ of $A'$ is then the push-out of $\fC'$ along $\fC \to \fD$. Thus Theorem~\ref{mainthm} is equivalent to the statement that such push-outs exist in the 2-category of tensor categories.

\subsection{Kriz's category}

Assume now that $k$ has characteristic~0. Suppose $V$ is a finite dimensional symplectic vector space over a finite field $\bF$ of odd characteristic, and let $\psi$ be a non-trivial additive character of $\bF$ valued in $k^{\times}$. We can then create the twisted group algebra of $V$. The underlying vector space is the usual group algebra $k[V]$, but the multiplication is given by
\begin{displaymath}
[v] \cdot [w] = \psi(\langle v, w \rangle) \cdot [v+w].
\end{displaymath}
This algebra carries a natural S-structure (\S \ref{ss:SpS}). In fact, it is split, as it is naturally isomorphic to the endomorphism algebra of the oscillator representation of the finite symplectic.

Now, consider the interpolation category $\fC=\uRep(\Sp_t(\bF))$ for the finite symplectic groups. There are several ways of constructing this category; we discuss the ultraproduct approach in \S \ref{s:ultra}, and the oligomorphic approach in \S \ref{s:oligo}. The S-algebras from the previous paragraph interpolate to an S-algebra $A$ in $\fC$. However, $A$ is not split in the category $\fC$ (at least if $t$ is generic). The following is our main result:

\begin{theorem} \label{mainthm2}
(One version of) Kriz's interpolation category is equivalent to the universal splitting category $\fD$ of $A$.
\end{theorem}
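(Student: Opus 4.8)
The plan is to recognize Kriz's category as a splitting category for $A$ and then to pin it down using the universal property from Theorem~\ref{mainthm}. I will work in the ultraproduct model of \S\ref{s:ultra}. Fix the relevant non-principal ultrafilter and sequence of finite symplectic groups, so that $\fC=\uRep(\Sp_t(\bF))$ is realized as a full tensor subcategory of the ultraproduct $\widehat{\fC}=\prod_{n\to\cU}\Rep(\Sp_{2n}(\bF))$. Let $W_n$ be the oscillator representation of $\Sp_{2n}(\bF)$ attached to $\psi$, and let $W=(W_n)_n$ be the corresponding object of $\widehat{\fC}$. By \S\ref{ss:SpS} the algebra $\uEnd(W_n)$ is the twisted group algebra of the symplectic space $V_n$ with its standard S-structure, and these assemble, under the chosen interpolation, to the S-algebra $A\in\fC$; hence $\uEnd(W)\cong A$ as S-algebras in $\widehat{\fC}$ (an isomorphism of S-algebras being finitely much diagrammatic data, it passes to the ultraproduct). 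Let $\fK$ denote the abelian tensor subcategory of $\widehat{\fC}$ generated by $\fC$ together with $W$; this is (one version of) Kriz's category. Then $\fK$, with the inclusion $\fC\hookrightarrow\fK$ and the isomorphism $\uEnd(W)\cong A$, is a splitting category for $A$, so by Theorem~\ref{mainthm} there is a tensor functor $F\colon\fD\to\fK$ which restricts to the inclusion $\fC=\fD_0\hookrightarrow\fK$ and which carries the tautological splitting object $X\in\fD$ to an object isomorphic to $W$, compatibly with the S-algebra isomorphisms $\uEnd(X)\cong A\cong\uEnd(W)$.

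It remains to prove $F$ is an equivalence. Essential surjectivity is immediate, since $\fK$ is generated by $\fC$ and $W$, both of which lie in the essential image. For full faithfulness I would compare $\Hom$ spaces. The refined form of Theorem~\ref{mainthm} describes $\fD$ explicitly: it is graded with degree-$0$ part $\fC$, is generated over $\fC$ by $X$, and every $\Hom_\fD$ is computed from $\Hom$ spaces in $\fC$ and the bimodule structure of $A$ by a recipe valid in an arbitrary splitting category; applied in $\fK$, this recipe produces maps from the $\Hom_\fD$'s to the $\Hom_\fK$'s, and these are surjective because $\fK$ is generated by $\fC$ and $W$. For injectivity, compute $\Hom_\fK=\prod_{n\to\cU}\Hom_{\Sp_{2n}(\bF)}$ directly inside $\widehat{\fC}$: since $\uEnd(W_n)=A_n$ is a split Azumaya algebra, tensoring with $W_n$ is an equivalence onto $A_n$-modules internal to $\Rep(\Sp_{2n}(\bF))$, which lets one eliminate the copies of $W_n$ from any finite-level $\Hom$ computation and rewrite it through $\Hom$ spaces in $\Rep(\Sp_{2n}(\bF))$ and $A_n$; this rewriting is uniform in $n$, so its ultraproduct is exactly the recipe above, and the two dimension counts agree. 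Hence $F$ is fully faithful, and therefore an equivalence. (In the non-semisimple range one reads ``generated'' in the abelian sense and uses that $\fD$ is abelian, again part of the refined form of Theorem~\ref{mainthm}, together with the fact that the abelian subcategory of $\widehat{\fC}$ generated by $\fC$ and $W$ is the minimal abelian tensor category receiving $\fC$ and a splitting of $A$.)

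The step I expect to be the real obstacle is not the above categorical bookkeeping but the identification of the concrete $\fK$ just described with the category Kriz actually constructs, and in particular the matching of normalizations. The S-structure on the twisted group algebra---hence the isomorphism $\uEnd(W_n)\cong A_n$, and thus $\uEnd(X)\cong A$---involves a choice of signs, morally a normalized square root of a Gauss sum (a Weil index), while Kriz's interpolated oscillator object carries its own normalization through the metaplectic cocycle and its behaviour under $\psi\mapsto\psi^{-1}$. Making these compatible amounts to checking that $X$ and Kriz's generator satisfy the same structural relations; the hedge ``one version of'' in the statement is precisely the acknowledgement that inequivalent conventions here yield categories that are equivalent but not canonically identified. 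A secondary point is that, for special values of $t$, one must make sure that passing to $\fK$, to the abelian envelope of $\fD$, and to Kriz's category all commute, which again follows from the universal property together with the inheritance results accompanying Theorem~\ref{mainthm}.
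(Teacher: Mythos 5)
Your overall plan — identify Kriz's category as a splitting category for $A$ and then appeal to a universal property — is the right shape, and your closing paragraph correctly flags the normalization issues. But there are two substantive gaps, and the second is the real mathematical content of the theorem.

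First, the $\bZ$-graded universal splitting category $\fD$ of Theorem~\ref{thm:split} \emph{cannot} be equivalent to Kriz's category, so your functor $F\colon\fD\to\fK$ is not full, let alone an equivalence. Concretely, in $\fD$ the objects $X^{\otimes 4}$ and $\Phi(A^{\otimes 2})$ sit in degrees $4$ and $0$, so $\Hom_{\fD}(X^{\otimes 4},\Phi(A^{\otimes 2}))=0$; but $F(X^{\otimes 4})\cong W^{\otimes 4}\cong A^{\otimes 2}\cong F(\Phi(A^{\otimes 2}))$ in $\fK$ by \cite[Lemma~15]{Kriz}, so the target $\Hom$ is nonzero. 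What you actually need is the $\bZ/m$-graded variant of \S\ref{ss:msplit} (with $m=4$, or $m=2$ if $-1$ is a square in $\bF$), which requires the extra input that $A^{\otimes 4}$ (resp.\ $A^{\otimes 2}$) is S-split (Corollary~\ref{cor:Asplit}); this $m$-splitting category is the $\fD$ the paper has in mind, and the hedge ``(one version of)'' in the statement is partly accounting for exactly this.

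Second, even after switching to the $\bZ/4$-graded $\fD$, your full-faithfulness argument skips the load-bearing step. Your ``finite-level rewriting'' via the Morita equivalence with $A_n$-modules is a perfectly good way to match $\Hom$ spaces \emph{within} a fixed degree mod $4$, but it says nothing about cross-degree $\Hom$'s. For $F$ to be full you need $\Hom_{\fK}(F(M),F(N))=0$ whenever $M,N$ lie in distinct $\bZ/4$-degrees, and this is a genuine theorem (Theorem~\ref{thm:mod4}, due to Deligne): at each finite level the representations $W_n^{\otimes a}\otimes V_n^{\otimes r}$ are all honest representations with plenty of morphisms between them, and the vanishing only emerges in the ultraproduct/interpolated setting. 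Your proposal never invokes this or anything equivalent, and it is not recoverable from the uniformity of your finite-level computations. The paper's actual argument sidesteps the direct construction of $F$ entirely: it defines $\fD'_a\subset\tilde{\fC}$ as the subcategory built from $W^{\otimes a}\otimes V^{\otimes r}$, uses Deligne's theorem to see that $\fD'=\bigoplus_{a\in\bZ/4}\fD'_a$ is a $\bZ/4$-graded splitting category with $\fD'_0=\fC$, and then applies the intrinsic characterization of the universal $m$-splitting category (the $\bZ/m$-analogue of Corollary~\ref{cor:uni-char}) rather than the bare universal property. That route makes the dependence on Theorem~\ref{thm:mod4} explicit, which your proposal hides.
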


We only prove this theorem for generic $t$, but it certainly holds more generally. The proof rests on a result of Deligne \cite{DeligneLetter1} (Theorem~\ref{thm:mod4}) about the representation theory of finite symplectic groups.

Our construction of the category $\fD$ has some advantages over Kriz's original approach (via her theory of T-algebras):
\begin{enumerate}
\item Our construction of $\fD$ is in a sense relative to the construction of $\fC$: one can use whatever method one prefers (ultraproducts, oligomorphic groups, T-algebras) to build $\fC$, and then $\fD$ pops out with little work.
\item Kriz only dealt with non-singular parameter values of $t$, where $\fC$ is semi-simple, but our approach only requires $t \ne 0$ (and even gives quite a bit at $t=0$). For instance, for any $t \ne 0$, our construction produces a pre-Tannakian category $\fD$.
\item Since our $\fD$ is produced using a universal construction, it comes with a mapping property. This more or less says that giving a tensor functor $\Phi$ out of $\fD$ is equivalent to giving one $\Phi_0$ out of $\fC$ together with a splitting of $\Phi_0(A)$. An explicit mapping property for $\fC$ does not yet exist in the literature, but \cite{EAH} gives a very detailed mapping property in the similar case of $\uRep(\GL_t(\bF))$.
\item Our construction does not require one to ever write down the oscillator representation; one only needs the twisted group algebras discussed above, which are far easier to deal with.
\end{enumerate}
There is one other result we obtain from our perspective: we show that the Brauer group of $\fC$ contains a subgroup of order four. Again, we only prove this for generic $t$, and the proof rests on Deligne's work mentioned above.

\subsection{Related work}

The study of $G$-graded extensions of (not necessarily symmetric) tensor categories has received a fair amount of attention in the literature; see, e.g., \cite{DN, ENO, GMPPS, JFR}. The approach of these papers is homotopy theoretic: $G$-graded extensions of $\fC$ correspond to maps from the classifying space $BG$ to a higher groupoid associated to $\fC$.

We do not expect our results on splitting categories to be surprising to experts in this area. However, we nonetheless feel that there could be value in our approach; we give three reasons. First, our construction of the universal splitting category $\fD$ is elementary and canonical: the associators are ``trivial'' (induced from the given category $\fC$), and symmetry isomorphisms, while not trivial, are given by simple explicit formulas. Second, our results are perhaps easier to apply in concrete situations: the obstructions are explicit conditions in the given category $\fC$, as opposed to cohomology classes in an associated space. And third, we characterize our splitting category $\fD$ by a universal property, which seems to be new.

\subsection{Outline}

In \S \ref{s:tencat}, we review material on tensor categories. In \S \ref{s:azumaya}, we study S-algebras, Azumaya algebras, and how they relate to one another. In \S \ref{s:split}, we prove our main theorem on splitting categories (Theorem~\ref{mainthm}), and in \S \ref{s:split2} we provide some supplementary material to it. In \S \ref{s:ultra}, we prove Theorem~\ref{mainthm2} using the ultraproduct approach, and in \S \ref{s:oligo} we make some comments on the oligomorphic approach.

\subsection{Notation}

We list some important notation:
\begin{description}[align=right,labelwidth=2cm,leftmargin=!]
\item [ $k$ ] the base field
\item [ $\fC$ ] a tensor category (always symmetric)
\item [ $\bone$ ] the unit object of a tensor category
\item [ $\tau$ ] the symmetry of a tensor category
\item [ $\Gamma$ ] the invariants functor, $\Hom_{\fC}(\bone, -)$
\item [ $\udim$ ] categorical dimension
\item [ $V^*$ ] the dual of a rigid object $V$
\item [ $A^{\op}$ ] the opposite algebra of $A$
\item [ $\Mod_A$ ] the category of left $A$-modules
\end{description}

\subsection*{Acknowledgments}

We thank Pierre Deligne, Pavel Etingof, Nate Harman, Sophie Kriz, and Noah Snyder for helpful conversations.

\section{Tensor categories} \label{s:tencat}

\subsection{General definitions}

Fix a field $k$ for the duration of the paper.

\begin{definition} \label{defn:rigid}
Let $\fC$ be a symmetric monoidal category, and let $X$ be an object of $\fC$. A \defn{dual} of $X$ is an object $Y$ equipped with maps
\begin{displaymath}
\alpha \colon \bone \to X \otimes Y, \qquad \beta \colon Y \otimes X \to \bone,
\end{displaymath}
called \defn{co-evaluation} and \defn{evaluation} satisfying the two \defn{zig-zag} identites. Here $\bone$ denotes the monoidal unit. The first zig-zag identity states that the composition
\begin{displaymath}
\xymatrix@C=3em{
\bone \otimes X \ar[r]^-{\alpha \otimes \id} &
X \otimes Y \otimes X \ar[r]^-{\id \otimes \beta} &
X \otimes \bone }
\end{displaymath}
is the identity (after identifying the outside objects with $X$), and the second identity is similar. We say that $X$ is \defn{rigid} if it has a dual; in this case, the dual is unique (up to isomorphism), and we denote it $X^*$. If $X$ has a dual, we say that $X$ is \defn{rigid}. If every object of $\fC$ has a dual, we say that $\fC$ is \defn{rigid}.
\end{definition}

An object in a symmetric monoidal category $\fC$ is \defn{invertible} if it is rigid and the (co-)evaluation maps are isomorphisms. We let $\Pic(\fC)$ be the abelian group of isomorphism classes of invertible objects.

\begin{definition} \label{defn:tencat}
A \defn{tensor category} is an additive $k$-linear category $\fC$ equipped with a $k$-bilinear symmetric monoidal structure such that $\End(\bone)=k$. A tensor category $\fC$ is \defn{pre-Tannakian} if it is rigid, $\Hom$-finite (i.e., all $\Hom$ spaces are finite dimensional), abelian, and all objects have finite length. A \defn{tensor functor} is a $k$-linear symmetric monoidal functor.
\end{definition}

Suppose $X$ is a rigid object in a tensor category. Let $\alpha$ and $\beta$ be the (co-)evaluation maps. Pre-composing $\beta$ with the symmetry, we regard its domain as $X \otimes Y$. The composition $\beta \circ \alpha$ is then an endomorphism of $\bone$, and thus an element of $k$. The \defn{categorical dimension} of $X$, denoted $\udim(X)$, is this scalar.

If $L$ is invertible object in a tensor category then $\End(L)=\End(\bone)=k$. In particular, the symmetry of $L^{\otimes 2}$ is multiplication by $\pm 1$; we refer to this as the \defn{sign} of $L$.

We refer to \cite{EGNO} for general background on tensor categories. We warn the reader that our ``pre-Tannakian category'' corresponds with their ``symmetric tensor category.''

\subsection{Algebras}

Let $\fC$ be a tensor category. An \defn{algebra} in $\fC$ will always mean an associative and unital algebra; typically, our algebras will not be commutative. For an arbitrary object $X$ of $\fC$, we let
\begin{displaymath}
\Gamma(X) = \Hom_{\fC}(\bone, X),
\end{displaymath}
which we call the \defn{invariant space} of $X$. If $A$ is an algebra then so is $\Gamma(A)$. Moreover, if $a \in \Gamma(A)$ then there are induced left and right multiplication morphisms
\begin{displaymath}
\lambda_a, \rho_a \colon A \to A
\end{displaymath}
in $\fC$. These will play an important role in our discussion.

Let $A$ be an algebra. We write $\Mod_A$ for the category of left $A$-modules. If $a \in \Gamma(A)$ and $M$ is an $A$-module then there is a left multiplication morphim $\lambda_a \colon M \to M$. We say that an $A$-module  $M$ is \defn{relatively free} if it has the form $A \otimes X$ for some $X \in \fC$, and \defn{relatively projective} if it is a summand of a relatively free module. We note that if $\fC$ is Karoubian then so is the category of relatively projective $A$-modules.

\subsection{Inverting an object}

Let $\fC$ be a symmetric monoidal category and let $L$ be an object of $\fC$. Define a new monoidal category $\fC[1/L]$, as follows. The objects of $\fC[1/L]$ are pairs $(X, n)$ where $X$ is an object of $\fC$ and $n$ is an integer. Morphisms are defined as follows
\begin{displaymath}
\Hom_{\fC[1/L]}((X,n), (Y,m)) = \varinjlim_r \Hom_{\fC}(L^{\otimes (r+n)} \otimes X, L^{\otimes (r+m)} \otimes Y).
\end{displaymath}
The monoidal operation on $\fC[1/L]$ is defined on objects by
\begin{displaymath}
(X,n) \otimes (Y,m) = (X \otimes Y, n+m),
\end{displaymath}
and in the obvious manner on morphisms. There is a natural monoidal functor $\Phi \colon \fC \to \fC[1/L]$ defined on objects by $\Phi(X)=(X,0)$. We require the following result:

\begin{proposition} \label{prop:invert1}
Suppose the 3-cycle $(1\;2\;3)$ acts trivially on $L^{\otimes 3}$. Then $\fC[1/L]$ is naturally a symmetric monoidal category, $\Phi$ is naturally a symmetric monoidal functor, and $\Phi(L)$ is invertible.
\end{proposition}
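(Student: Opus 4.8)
The plan is to construct on $\fC[1/L]$ a symmetry isomorphism $\sigma_{(X,n),(Y,m)}$ lifting the symmetry $\tau$ of $\fC$, verify the hexagon and symmetry axioms, and then check invertibility of $\Phi(L)=(L,1)$. The natural guess for the symmetry on objects $(X,n)$ and $(Y,m)$ is to take the given symmetry $\tau_{X,Y}\colon X\otimes Y\to Y\otimes X$ of $\fC$ and regard it, via the defining colimit with $r=0$, as a morphism $(X\otimes Y,n+m)\to(Y\otimes X,n+m)$. The catch is that this need not be well-defined: a morphism in $\fC[1/L]$ is an equivalence class, and the structural isomorphisms of $\fC[1/L]$ (in particular whatever will play the role of the associator and the identification of $(X,n)\otimes(Y,m)$ with $(X\otimes Y,n+m)$) implicitly move things around by powers of $L$, so $\tau_{X,Y}$ and $\tau_{X,Y}\otimes\id_{L}$ must agree up to the identifications built into the colimit. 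This is exactly where the hypothesis enters: stabilizing by one more tensor factor of $L$ twists the two candidate maps by the braiding of $L$ past $X\otimes Y$, and these braidings are controlled by the action of the cyclic group on powers of $L$. First I would make precise the monoidal structure on $\fC[1/L]$ at the level of morphisms, spelling out the coherence isomorphism $\iota_{X,n}\colon (L,1)^{\otimes n}\otimes(X,0)\xrightarrow{\sim}(X,n)$ and checking it is natural; this is bookkeeping but needs to be pinned down before anything about symmetry makes sense.

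Next I would define $\sigma_{(X,n),(Y,m)}$ and prove it is well-defined. Concretely, a representative of a morphism $(X,n)\to(Y,m)$ lives in $\Hom_\fC(L^{\otimes(r+n)}\otimes X,\,L^{\otimes(r+m)}\otimes Y)$ for some $r$, and passing from level $r$ to level $r+1$ is given by $\id_L\otimes(-)$ followed by reindexing. The symmetry on $(X,n)\otimes(Y,m)=(X\otimes Y,n+m)$ versus $(Y,m)\otimes(X,n)=(Y\otimes X,n+m)$ should be $\tau_{X,Y}$ at level $r=0$; to see this descends, I need that its image at level $r$, namely $\id_{L^{\otimes r}}\otimes\tau_{X,Y}$, is compatible — and the only discrepancy between the two natural ways of transporting $\tau$ up a level is a braiding of $L^{\otimes 3}$ (coming from moving the three ``new'' $L$-factors introduced on the two sides past each other), which the hypothesis says is trivial. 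I expect that unwinding this is the main obstacle: the actual computation is a diagram in $\fC$ involving several copies of $L$, the given symmetry $\tau$, and the zig-zag/naturality identities, and one has to show the net effect of the extra $L$'s is exactly the image of the $3$-cycle $(1\;2\;3)$ acting on $L^{\otimes 3}$ (or possibly a product of such, reducible to a single $3$-cycle since $3$-cycles generate the relevant action and the sign subtleties cancel). I would isolate this as a lemma: \emph{under the stated hypothesis, for any $a,b,c\ge 0$ the two evident isomorphisms $L^{\otimes a}\otimes L^{\otimes b}\otimes L^{\otimes c}\to L^{\otimes(a+b+c)}$ obtained by composing adjacent braidings agree}, which follows because any such difference is a product of braidings of three grouped factors, each trivial by assumption.

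With well-definedness in hand, the symmetric monoidal axioms for $(\fC[1/L],\sigma)$ reduce to the corresponding axioms for $(\fC,\tau)$ plus the same triviality input: the hexagon identities, naturality in both variables, and $\sigma^2=\id$ all hold at level $r=0$ because they hold for $\tau$ in $\fC$, and passing to other levels only introduces the (now trivial) braidings of powers of $L$. That $\Phi$ is a symmetric monoidal functor is immediate, since $\Phi$ sends $\tau_{X,Y}$ to the level-$0$ representative of $\sigma_{\Phi(X),\Phi(Y)}$ essentially by definition. Finally, invertibility of $\Phi(L)=(L,1)$: its inverse is $(\bone,-1)$, with unit and counit the canonical isomorphisms $(\bone,0)\to(L,1)\otimes(\bone,-1)=(L,0)\to(\bone,0)$ — here $(L,0)\to(\bone,0)$ is the identity of $L$ viewed via the colimit with $n=-1$ absorbed, i.e.\ $\Hom_{\fC[1/L]}((L,0),(\bone,0))=\varinjlim_r\Hom_\fC(L^{\otimes r}\otimes L,L^{\otimes r})$ contains the class of $\id_{L^{\otimes(r+1)}}$ — and one checks the two zig-zag identities, which again are tautological in the colimit. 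I would close by remarking that the sign of the invertible object $\Phi(L)$ is the scalar by which the transposition acts on $L^{\otimes 2}$ in $\fC$, consistent with the hypothesis only constraining the $3$-cycle and not the transposition.
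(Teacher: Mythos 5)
The paper itself gives no proof here; it simply cites Voevodsky and Robalo. So I am comparing your argument against what a correct proof must contain, and there is a genuine gap in a central step.

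You propose $\sigma_{(X,n),(Y,m)}$ to be the class of $\tau_{X,Y}$ at level $r=0$, and you locate the role of the hypothesis in the ``well-definedness'' of this class in the colimit. Both parts of this are off. First, well-definedness is automatic and uses nothing: the transition map $f \mapsto \id_L \otimes f$ carries $\id_{L^{\otimes(r+n+m)}} \otimes \tau_{X,Y}$ to $\id_{L^{\otimes(r+1+n+m)}} \otimes \tau_{X,Y}$ on the nose, so $[\tau_{X,Y}]$ is tautologically a morphism in $\fC[1/L]$. Second, and more seriously, this naive $\sigma$ is simply not natural in general. Consider the canonical isomorphism $\iota \colon (\bone,1) \to (L,0)$ represented by $\id_L$. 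Naturality of $\sigma$ with respect to $\iota \otimes \iota$ forces $\sigma_{(\bone,1),(\bone,1)}$ and $\sigma_{(L,0),(L,0)}$ to agree under the isomorphism $\iota \otimes \iota$; under your definition the former is the class of $\id$ while the latter is the class of $\tau_{L,L}$, so you are implicitly assuming $\tau_{L,L}=\id$. But the $3$-cycle hypothesis does not give this: take $\fC$ to be super vector spaces and $L$ the odd line, where $(1\;2\;3)$ acts trivially on $L^{\otimes 3}$ (it is even) yet $\tau_{L,L} = -\id$. Indeed your own closing remark, that the sign of $\Phi(L)$ can be $-1$, already contradicts your construction, since $(\bone,1) \cong \Phi(L)$ and your $\sigma_{(\bone,1),(\bone,1)}$ has trivial sign. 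The correct symmetry must carry a twist by $\tau_{L^{\otimes n},L^{\otimes m}}$, i.e.\ it should morally be $\tau_{L^{\otimes n}\otimes X,\, L^{\otimes m}\otimes Y}$ written in the normalized form $L^{\otimes(n+m)} \otimes X \otimes Y$; the $3$-cycle hypothesis is exactly what makes the resulting family of twists coherent (since it forces $\fS_k$ to act on $L^{\otimes k}$ through the sign character, so the twist is a well-controlled involution depending only on $nm \bmod 2$). Constructing this twisted braiding and verifying the hexagon and naturality with it is the real content of the proposition; it is absent from your argument.

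Two further points. Your auxiliary lemma, as stated, is false: under the hypothesis only the even permutations act trivially on $L^{\otimes k}$, so the ``two evident isomorphisms $L^{\otimes a}\otimes L^{\otimes b}\otimes L^{\otimes c}\to L^{\otimes(a+b+c)}$'' obtained by composing adjacent braidings need not agree (they differ by a transposition, which can act by $-\id$); only cyclic rearrangements are guaranteed to be the identity. And the monoidal structure on morphisms in $\fC[1/L]$, described by the paper as ``obvious,'' is itself delicate: the formula $f\otimes g$ requires reordering tensor factors by braiding powers of $L$ past other objects, and checking that the result is independent of the chosen representatives already uses the same coherence input. Your invertibility argument for $\Phi(L)$ is fine, but that was never where the difficulty lay.
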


\begin{proof}
This is well-known; see \cite[Theorem~4.3]{Voevodsky} or \cite[\S 4.2.2]{Robalo}.
\end{proof}

\begin{remark}
If $M$ is an invertible object in a symmetric monoidal category then $(1\;2\;3)$ acts trivially on $M^{\otimes 3}$. Thus this condition on $L$ is necessary if we want to invert $L$.
\end{remark}

\subsection{Trivializing an invertible object} \label{ss:triv1}

Let $\fC$ be a tensor category and let $L$ be an invertible object of $\fC$ such that the symmetry of $L^{\otimes 2}$ is trivial. We now define a new symmetric monoidal category $\fC\{L\}$ in which $L$ becomes isomorphic to the monoidal unit. This is sometimes called ``de-equivariantization.''

The objects of $\fC\{L\}$ are the same as those of $\fC$; for notational clarity, if $X$ is an object of $\fC$, we write $[X]$ for the corresponding object of $\fC\{L\}$. Morphisms are defined by
\begin{displaymath}
\Hom_{\fC\{L\}}([X], [Y]) = \bigoplus_{n \in \bZ} \Hom_{\fC}(X, L^{\otimes n} \otimes Y).
\end{displaymath}
Composition is defined in the obvious manner. The tensor product on $\fC\{L\}$ is defined on objects by $[X] \otimes [Y]=[X \otimes Y]$. Suppose that
\begin{displaymath}
[f_1] \colon [X_1] \to [Y_1], \qquad [f_2] \colon [X_2] \to [Y_2]
\end{displaymath}
are morphisms in $\fC\{L\}$, represented by morphisms
\begin{displaymath}
f_1 \colon X_1 \to L^{\otimes n} \otimes Y_1, \qquad f_2 \colon X_2 \to L^{\otimes m} \otimes Y_2
\end{displaymath}
in $\fC$. The tensor product is the morphism in $\fC\{L\}$ represented by the composition
\begin{displaymath}
\xymatrix@C=3em{
X_1 \otimes X_2 \ar[r]^-{f_1 \otimes f_2} \ar[r] &
L^{\otimes n} \otimes Y_1 \otimes L^{\otimes m} \otimes Y_2 \ar@{=}[r] &
L^{\otimes n+m} \otimes Y_1 \otimes Y_2, }
\end{displaymath}
where the second isomorphism is the symmetry isomorphism in $\fC$. This definition is extended linearly to general morphisms.

Finally, the symmetry on $\fC\{L\}$ is simply induced from the one on $\fC$. That is, the isomorphism $[X_1] \otimes [X_2] \to [X_2] \otimes [X_1]$ is simply the corresponding isomorphism from $\fC$. This clearly satisfies the hexagon axioms. The one subtlety here is functoriality. Consider morphisms $[f_1]$ and $[f_2]$ as above. We must show that the square
\begin{displaymath}
\xymatrix@C=3em{
[X_1] \otimes [X_2] \ar[r]^{[f_1] \otimes [f_2]} \ar@{=}[d] &
[Y_1] \otimes [Y_2] \ar@{=}[d] \\
[X_2] \otimes [X_1] \ar[r]^{[f_2] \otimes [f_1]} &
[Y_2] \otimes [Y_1] }
\end{displaymath}
commutes. This diagram comes from the following one in $\fC$
\begin{displaymath}
\xymatrix@C=3em{
X_1 \otimes X_2 \ar[r]^-{f_1 \otimes f_2} \ar@{=}[d] &
L^{\otimes n} \otimes Y_1 \otimes L^{\otimes m} \otimes Y_2 \ar@{=}[d] \ar@{=}[r] &
L^{\otimes (n+m)} \otimes Y_1 \otimes Y_2 \ar@{=}[d] \\
X_2 \otimes X_1 \ar[r]^-{f_2 \otimes f_1} &
L^{\otimes m} \otimes Y_2 \otimes L^{\otimes n} \otimes Y_1 \ar@{=}[r] &
L^{\otimes (m+n)} \otimes Y_2 \otimes Y_1 }
\end{displaymath}
This diagram commutes in $\fC$, provided the rightmost isomorphism is $\tau \otimes \tau'$, where $\tau$ is the symmetry on $L^{\otimes (n+m)}=L^{\otimes n} \otimes L^{\otimes m}$ and $\tau'$ is the symmetry on $Y_1 \otimes Y_2$. However, since the symmetry on $L^{\otimes 2}$ is trivial, it follows that $\fS_n$ acts trivially on $L^{\otimes n}$ for all $n$, and so $\tau$ is the identity morphism of $L^{\otimes (n+m)}$. This implies that the previous diagram in $\fC\{L\}$ commutes.

We have thus constructed a symmetric monoidal structure on $\fC\{L\}$. There is a natural symmetric monoidal functor $\Phi \colon \fC \to \fC\{L\}$ defined by $\Phi(X)=[X]$. The key property of this construction is that we have a natural isomorphism $i \colon \Phi(L) \to \bone$; this is the map $[L] \to [\bone]$ represented by the identity map $L \to L \otimes \bone$. One can show that this construction is universal with respect to this property.

\begin{remark}
Let $R=\bigoplus_{n \in \bZ} L^{\otimes n}$, regarded as an ind-object in $\fC$. This is a commutative algebra in $\Ind(\fC)$; it is essentially a ring of Laurent series (note that for $n \ge 0$ we have $L^{\otimes n}=\Sym^n(L)$ since the symmetry of $L^{\otimes 2}$ is trivial). Our category $\fC\{L\}$ is equivalent to the full subcategory of $\Mod_R$ spanned by objects of the form $R \otimes X$ with $X \in \fC$. Moreover, the tensor product on $\fC\{L\}$ simply corresponds to $\otimes_R$ from this point of view.
\end{remark}

\begin{remark}
We warn the reader that if $L=\bone$ is already trivial then $\fC\{L\}$ is not $\fC$, as the morphism spaces become larger; for example, $\End_{\fC\{L\}}(\bone)=k[t,t^{-1}]$.
\end{remark}

\subsection{Trivializing more general objects} \label{ss:triv2}

Let $\fC$ be a tensor category and let $L$ be an object (not necessarily invertible) for which the symmetry of $L^{\otimes 2}$ is trivial. As noted above, this implies that the symmetric group $\fS_3$ acts trivially on $L^{\otimes 3}$. We can therefore apply the first construction above to obtain a symmetric monoidal functor $\Phi_1 \colon \fC \to \fC[1/L]$ such that $L'=\Phi_1(L)$ is invertible. Since $\Phi_1$ is symmetric monoidal, it follows that the symmetry of $(L')^{\otimes 2}$ is trivial, and so we can apply the second construction to obtain a symmetric monoidal functor $\Phi_2 \colon \fC[1/L] \to (\fC[1/L])\{L'\}$ equipped with an isomorphism $\Phi_2(L') \to \bone$. We put $\fC\{L\}=(\fC[1/L])\{L'\}$ for notational simplicity. We have a symmetric monoidal functor $\Phi \colon \fC \to \fC\{L\}$, and a natural isomorphism $\Phi(L) \to \bone$. Again, one can show that this construction is universal.

\section{S-algebras and Azumaya algebras} \label{s:azumaya}

\subsection{S-algebras}

Fix a tensor category $\fC$ for the duration of \S \ref{s:azumaya}. The following is a central concept in this paper:

\begin{definition} \label{defn:Salg}
An \defn{S-structure} on an algebra $A$ in $\fC$ is a pair $(\pi, \epsilon)$ consisting of an invariant element $\pi \in \Gamma(A \otimes A)$ and a map $\epsilon \colon A \to \bone$ in $\fC$ such that the following identities hold:
\begin{displaymath}
\tau=\lambda_{\pi} \rho_{\pi}, \qquad \mu=(\id \otimes \epsilon) \rho_{\pi}.
\end{displaymath}
An \defn{S-algebra} is an algebra equipped with an S-structure.
\end{definition}

We recall that $\lambda$ and $\rho$ are left and right multiplication operators, $\mu$ is the multiplication map on $A$, and $\tau$ is the symmetry of $A^{\otimes 2}$. We note that if $(\pi, \epsilon)$ is an S-structure on $A$ then so is $(-\pi, -\epsilon)$; in Proposition~\ref{prop:Sclass}, we will see that it is the only other one (under mild hypotheses). The primary examples of S-algebras are endomorphism algebras:

\begin{example}
Let $V$ be a rigid object of $\fC$. Recall that $\uEnd(V)=V \otimes V^*$ is the internal endomorphism algebra of $V$. Let $\epsilon \colon \uEnd(V) \to \bone$ be the trace map, i.e., the evaluation map for $V$. We have a natural identification
\begin{displaymath}
\Gamma(\uEnd(V) \otimes \uEnd(V)) = \End(V \otimes V),
\end{displaymath}
and we let $\pi$ be the element here corresponding to the symmetry automorphism of $V \otimes V$. One readily verifies that $(\pi, \epsilon)$ is an S-structure, and in this way we regard $\uEnd(V)$ as an S-algebra.
\end{example}

We now establish a few basic results about S-algebras.

\begin{proposition}
Let $(\pi, \epsilon)$ be an S-structure on $A$. Then $\pi^2=1$ and $\tau(\pi)=\pi$.
\end{proposition}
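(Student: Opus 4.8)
The plan is to work inside the tensor-product algebra $B := A \otimes A$, of which $\pi \in \Gamma(B)$ is an invariant element, and to use \emph{only} the first S-structure identity $\tau = \lambda_{\pi}\rho_{\pi}$; here $\lambda_{\pi}, \rho_{\pi} \colon B \to B$ are left and right multiplication by $\pi$ in $B$, the evident extension of the notation of \S\ref{s:azumaya} from $A$ to the algebra $B$. On generalized elements these operators read $x \mapsto \pi x$ and $x \mapsto x\pi$, so $\lambda_{\pi}\rho_{\pi}$ is ``conjugation'' $x \mapsto \pi x \pi$; diagrammatically, $\lambda_{\pi} = m_{B}\circ(\pi \otimes \id_{B})$ and $\rho_{\pi} = m_{B}\circ(\id_{B}\otimes \pi)$ after the evident unitor identifications.

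To get $\pi^{2} = 1$, I would precompose the identity $\tau = \lambda_{\pi}\rho_{\pi}$ with the unit map $u_{B}\colon \bone \to B$. On the right, $\rho_{\pi}\circ u_{B} = \pi$ (the unit of $B$ times $\pi$), and then $\lambda_{\pi}\circ \pi = \pi^{2}$ (the square of $\pi$ in the algebra $\Gamma(B)$), so the right-hand side becomes $\pi^{2}$. On the left, $\tau \circ u_{B} = u_{B}$: indeed $u_{B} = u_{A}\otimes u_{A}$ up to unitors, and the symmetry $\tau_{A,A}$ merely interchanges the two copies of $u_{A}$, which is naturality of $\tau$ applied to $u_{A}\colon \bone \to A$ together with the coherence fact $\tau_{\bone,\bone} = \id$. (Equivalently: $\tau_{A,A}$ is an algebra automorphism of $B$, hence fixes the unit.) Therefore $\pi^{2} = u_{B} = 1$ in $\Gamma(A\otimes A)$.

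To get $\tau(\pi) = \pi$, I would precompose the same identity with $\pi\colon \bone \to B$ in place of $u_{B}$. Then $\rho_{\pi}\circ \pi = \pi^{2} = 1 = u_{B}$ by the previous step, and $\lambda_{\pi}\circ u_{B} = \pi$ ($\pi$ times the unit of $B$), so $\tau \circ \pi = \pi$, as desired. I do not expect a genuine obstacle here: the only real content is the observation that $\tau$ fixes the unit of $A \otimes A$, and everything else is bookkeeping with the unitor isomorphisms identifying $B$, $\bone \otimes B$ and $B \otimes \bone$; note in particular that the map $\epsilon$ and the second S-structure identity play no role.
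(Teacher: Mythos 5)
Your proof is correct and takes essentially the same route as the paper: evaluate $\tau=\lambda_\pi\rho_\pi$ first at the unit to get $\pi^2=1$, then at $\pi$ to get $\tau(\pi)=\pi^3=\pi$. The only difference is that you spell out the bookkeeping (that $\tau$ fixes the unit, and the unitor identifications) which the paper takes for granted.
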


\begin{proof}
Evaluating the identity $\tau=\lambda_{\pi} \rho_{\pi}$ at the identity element 1 of $A \otimes A$, and using $\tau(1)=1$, we see that $\pi^2=1$. Evaluating at $\pi$, we see that $\tau(\pi)=\pi^3=\pi$.
\end{proof}

Note that since $\pi^2=1$, the operator $\lambda_{\pi} \rho_{\pi}$ is conjugation by $\pi$. Thus in an S-algebra, the symmetry $\tau$ of $A \otimes A$ is inner.

\begin{proposition}
Let $A$ be an S-algebra. Then the maps $\pi \colon \bone \to A \otimes A$ and $\epsilon \mu \colon A \otimes A \to \bone$ define a symmetric self-duality of $A$. In particular, $A$ is a rigid object of $\fC$.
\end{proposition}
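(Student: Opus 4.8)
The plan is to check the two zig-zag identities directly, using the defining relations $\tau = \lambda_\pi \rho_\pi$ and $\mu = (\id \otimes \epsilon)\rho_\pi$ of the S-structure, together with the already-established facts $\pi^2 = 1$ and $\tau(\pi) = \pi$. The claimed coevaluation is $\pi \colon \bone \to A \otimes A$ and the claimed evaluation is $\epsilon\mu \colon A \otimes A \to \bone$; since $\pi$ is $\tau$-invariant, once the zig-zags hold the duality will automatically be symmetric, giving $A^* \cong A$ and hence rigidity of $A$.

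First I would unwind what the first zig-zag identity asks for. It is the assertion that the composite
\begin{displaymath}
A \xrightarrow{\ \pi \otimes \id\ } A \otimes A \otimes A \xrightarrow{\ \id \otimes (\epsilon\mu)\ } A
\end{displaymath}
is the identity of $A$. Writing $\pi = \sum \pi' \otimes \pi''$ informally (or, better, arguing diagrammatically), the map $\id \otimes (\epsilon\mu)$ applied after $\pi \otimes \id$ sends $a \in A$ to $\sum \pi' \cdot \epsilon(\pi'' a)$. The key observation is that $a \mapsto \sum \pi'\,\epsilon(\pi'' a)$ is, by definition, $(\id \otimes \epsilon)$ applied to the map $a \mapsto \sum \pi' \otimes \pi'' a = \rho_\pi$-type expression — more precisely, it is $(\id \otimes \epsilon)\rho_\pi$ evaluated appropriately, which by the second S-structure identity equals $\mu$ composed in the right way with the unit, i.e.\ it collapses to $a \mapsto a$ once one inserts the unit $1 \in \Gamma(A)$. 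So the identity $\mu = (\id\otimes\epsilon)\rho_\pi$ is exactly what makes the first zig-zag work; the second zig-zag is the mirror-image statement and should follow by the same manipulation after using $\tau(\pi) = \pi$ to swap the two legs of $\pi$, or equivalently from the left-handed analogue $\mu = (\epsilon \otimes \id)\lambda_\pi$ of the S-structure identity (which follows from the given one by applying $\tau$ and $\tau(\pi)=\pi$).

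The main obstacle, and the step I would be most careful about, is purely bookkeeping: translating the element-style identities of Definition~\ref{defn:Salg} — which are phrased in terms of the operators $\lambda_\pi$, $\rho_\pi$ on $A$ and the invariant $\pi \in \Gamma(A\otimes A)$ — into honest composites of morphisms in $\fC$ threading through the correct tensor factors, and making sure the associativity/unit coherence isomorphisms are handled correctly when one inserts the unit $1 \colon \bone \to A$. In particular one must be sure that ``$\rho_\pi$'' in the identity $\mu = (\id\otimes\epsilon)\rho_\pi$ is being read as a morphism $A \to A \otimes A$ (namely $a \mapsto \sum \pi' \otimes a\pi''$ or its variant), so that post-composing with $\id \otimes \epsilon$ lands in $A$; getting the placement of $\pi$ relative to the multiplication right is the only genuinely delicate point. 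Once the conventions are pinned down, both zig-zags are one-line consequences of the two S-structure identities, and symmetry of the duality is immediate from $\tau(\pi) = \pi$.
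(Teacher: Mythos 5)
Your overall plan is the same as the paper's: verify the two zig-zag identities directly from the S-structure axioms, then observe that $\tau(\pi)=\pi$ makes the duality symmetric. The symmetry part is fine. But there is a concrete error at exactly the step you flag as ``delicate,'' and it is not just bookkeeping.

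Tracing through your first zig-zag: starting from $a$, the composite $(\id\otimes\epsilon\mu)\circ(\pi\otimes\id)$ gives (in Sweedler-style notation) $\sum \pi'\,\epsilon(\pi'' a)$. You then assert that $a\mapsto\sum \pi'\otimes\pi'' a$ is ``a $\rho_\pi$-type expression.'' It is not: $\sum \pi'\otimes\pi'' a = \pi\cdot(1\otimes a) = \lambda_\pi(1\otimes a)$, whereas $\rho_\pi(1\otimes a) = (1\otimes a)\cdot\pi = \sum \pi'\otimes a\pi''$. These differ because $A$ is not commutative, so the S-axiom $\mu=(\id\otimes\epsilon)\rho_\pi$ does not apply directly to what you have. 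To close the gap you must invoke the \emph{other} S-axiom, $\tau=\lambda_\pi\rho_\pi$ (together with $\pi^2=1$), to rewrite $\pi(1\otimes a)=(a\otimes 1)\pi$ and hence $(\id\otimes\epsilon)\bigl(\pi(1\otimes a)\bigr)=(\id\otimes\epsilon)\bigl((a\otimes 1)\pi\bigr)=\mu(a\otimes 1)=a$; equivalently, one can first show $\epsilon$ is trace-like ($\epsilon(xy)=\epsilon(yx)$), which also uses $\tau(\pi)=\pi$. Either way, the relation $\tau=\lambda_\pi\rho_\pi$ is genuinely needed in the zig-zag computation, not just $\mu=(\id\otimes\epsilon)\rho_\pi$.

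For comparison, the paper avoids this slip by writing $\epsilon\mu=(\epsilon\otimes\epsilon)\rho_\pi$ and organizing the computation in $A^{\otimes 3}$ with $\pi_1=\pi\otimes 1$ and $\pi_2=1\otimes\pi$; the key intermediate identity is $(\id\otimes\epsilon\otimes\id)(\pi_2\pi_1)=\pi$, obtained by applying $\mu\otimes\id$ to $\pi_2$. This keeps all the $\pi$'s on the correct side throughout and never needs the trace property as a separate observation. Your sketch would be a correct proof once the $\lambda_\pi$-versus-$\rho_\pi$ confusion is repaired as above.
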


\begin{proof}
We must show that the composition
\begin{displaymath}
\xymatrix@C=3em{
A \ar[r]^-{\id \otimes \pi} & A \otimes A \otimes A \ar[r]^-{\epsilon \mu \otimes \id} & A }
\end{displaymath}
is the identity. (This is the first zig-zag identity; the proof of the second is similar.) Call the above map $\phi$. We have
\begin{displaymath}
\phi(x) = (\epsilon \otimes \epsilon \otimes \id)((x \otimes 1 \otimes 1)\pi_2 \pi_1),
\end{displaymath}
where $\pi_1=\pi \otimes 1$ and $\pi_2 = 1 \otimes \pi$. Factor $\epsilon \otimes \epsilon \otimes \id$ as $(\epsilon \otimes \id)(\id \otimes \epsilon \otimes \id)$, and note that $(x \otimes 1 \otimes 1)$ pulls out of the latter map. We thus obtain
\begin{displaymath}
\phi(x) = (\epsilon \otimes \id)\big( (x \otimes 1) \cdot (\id \otimes \epsilon \otimes \id)(\pi_2 \pi_1) \big).
\end{displaymath}
We have
\begin{displaymath}
(\id \otimes \epsilon \otimes \id)(\pi_2 \pi_1) = (\mu \otimes \id)(\pi_2) = \pi.
\end{displaymath}
Thus
\begin{displaymath}
\phi(x) = (\epsilon \otimes \id)((x \otimes 1) \pi) = (\id \otimes \epsilon)((1\otimes x)\pi)=\mu(1 \otimes x)=x.
\end{displaymath}
We have thus shown that $\phi$ is the identity, and so we have a duality. The co-evaluation map $\pi \colon \bone \to A \otimes A$ is symmetric since $\tau(\pi)=\pi$. Similarly, the evaluation map $\epsilon \mu=(\epsilon \otimes \epsilon) \rho_{\pi}$ is symmetric.
\end{proof}

We define the \defn{categorical degree} of an S-algebra, denoted $\udeg(A)$, to be the quantity $\epsilon \circ \eta \in \End(\bone)=k$, where $\eta \colon \bone \to A$ is the unit. We often abbreviate this to $\epsilon(1)$. We note that replacing $(\pi, \epsilon)$ with $(-\pi, -\epsilon)$ negates the categorical degree. We also note that the categorical degree of $\uEnd(V)$ coincides with the categorical dimension of $V$. The following corollary is an analog of the relation between dimension and degree for central simple algebras.

\begin{corollary} \label{cor:dim-deg}
For an S-algebra $A$ we have $\udim(A) = \udeg(A)^2$.
\end{corollary}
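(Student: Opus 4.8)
The plan is to compute $\udim(A)$ directly from the symmetric self-duality of $A$ established in the previous proposition, whose coevaluation and evaluation morphisms are $\pi\colon\bone\to A\otimes A$ and $\epsilon\mu\colon A\otimes A\to\bone$. Since the categorical dimension may be computed using any duality, and since $\tau(\pi)=\pi$, the first step gives
\begin{displaymath}
\udim(A)=(\epsilon\mu)\circ\tau\circ\pi=(\epsilon\mu)\circ\pi=\epsilon\big(\mu(\pi)\big),
\end{displaymath}
where $\mu(\pi):=\mu\circ\pi\in\Gamma(A)$ denotes the image of the invariant element $\pi$ under the multiplication $\mu\colon A\otimes A\to A$. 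So the whole question is reduced to identifying the invariant element $\mu(\pi)$.

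To do that I would use the second defining identity of an S-structure, $\mu=(\id\otimes\epsilon)\rho_\pi$. Reading this as an equality of morphisms $A\otimes A\to A$ and precomposing with $\pi$, one gets $\mu(\pi)=(\id\otimes\epsilon)\big(\rho_\pi(\pi)\big)$. Now $\rho_\pi$ is right multiplication by $\pi$ in the algebra $A\otimes A$, so $\rho_\pi(\pi)=\pi^2$, and $\pi^2=1_A\otimes 1_A$ by the earlier proposition (``$\pi^2=1$''). Since $(\id\otimes\epsilon)(1_A\otimes 1_A)=\epsilon(1_A)\cdot 1_A=\udeg(A)\cdot 1_A$, I conclude $\mu(\pi)=\udeg(A)\cdot 1_A$. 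Substituting into the display above,
\begin{displaymath}
\udim(A)=\epsilon\big(\udeg(A)\cdot 1_A\big)=\udeg(A)\cdot\epsilon(1_A)=\udeg(A)^2,
\end{displaymath}
as desired.

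I do not expect any real obstacle. The only point requiring a little care is the bookkeeping in the step above: one must check that the element-style manipulations ($\rho_\pi(\pi)=\pi^2$, and so on) are legitimate identities of morphisms in $\fC$, but this is exactly the kind of reasoning already carried out in the proof that $(\pi,\epsilon\mu)$ is a duality, so nothing new is needed. The real content of the corollary is that the S-axioms were designed precisely so that $\pi^2=1$ and $\mu=(\id\otimes\epsilon)\rho_\pi$ hold simultaneously.
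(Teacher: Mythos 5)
Your proof is correct and is essentially the paper's argument: both compute $\udim(A)=(\epsilon\mu)(\pi)$ via the symmetric self-duality and then use $\pi^2=1$ together with the S-axiom $\mu=(\id\otimes\epsilon)\rho_\pi$ (equivalently, the derived identity $\epsilon\mu=(\epsilon\otimes\epsilon)\rho_\pi$ that the paper uses directly) to reduce to $\epsilon(1)^2$. The only difference is a cosmetic regrouping of the same composite.
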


\begin{proof}
The categorical dimension of $A$ is the composition of the evaluation and co-evaluation maps. We thus have
\begin{displaymath}
\udim(A) = (\epsilon \mu)(\pi) = (\epsilon \otimes \epsilon)(\pi^2)=\epsilon(1)^2,
\end{displaymath}
since $\pi^2=1=1 \otimes 1$.
\end{proof}

Let $A$ be an S-algebra. The symmetric group $\fS_n$ obviously acts on $A^{\otimes n}$. We now show that there is actually an action of the larger group $\fS_n \times \fS_n$; this is a key feature of S-algebras. To begin, define $\pi_1=\pi \otimes 1$ and $\pi_2=1 \otimes \pi$ in $\Gamma(A^{\otimes 3})$.

\begin{proposition} \label{prop:braid}
The braid equation $(\pi_1 \pi_2)^3=1$ holds.
\end{proposition}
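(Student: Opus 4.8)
The plan is to upgrade the desired identity to the full \emph{braid relation} $\pi_1\pi_2\pi_1=\pi_2\pi_1\pi_2$ in the algebra $\Gamma(A^{\otimes 3})$, and then to deduce $(\pi_1\pi_2)^3=1$ from this together with $\pi_1^2=\pi_2^2=1$. The deduction is a one-line manipulation: assuming the braid relation,
\[
(\pi_1\pi_2)^3=(\pi_1\pi_2\pi_1)(\pi_2\pi_1\pi_2)=(\pi_2\pi_1\pi_2)(\pi_2\pi_1\pi_2)=\pi_2\pi_1\pi_2^2\pi_1\pi_2=\pi_2\pi_1^2\pi_2=\pi_2^2=1,
\]
where $\pi_i^2=1$ follows from $\pi^2=1$ (proved above) by applying the unital algebra map $A^{\otimes 2}\to A^{\otimes 3}$ that inserts the unit into the missing tensor slot.

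So the content is the braid relation. I would prove it by introducing an auxiliary element $\pi_{13}\in\Gamma(A^{\otimes 3})$, the image of $\pi$ under the algebra map $A^{\otimes 2}\to A^{\otimes 3}$ that inserts the unit into the \emph{middle} factor, and establishing
\[
\tau_{12}(\pi_2)=\pi_{13}=\tau_{23}(\pi_1),
\]
where $\tau_{ij}\colon A^{\otimes 3}\to A^{\otimes 3}$ is the monoidal symmetry transposing the $i$th and $j$th tensor factors, acting on $\Gamma(A^{\otimes 3})=\Hom(\bone,A^{\otimes 3})$ by post-composition. Each of the two equalities is a short diagram chase using only naturality of the symmetry $\tau$ with respect to the unit morphism $\bone\to A$ (plus the unit and associativity coherence constraints): since $\pi_1$ is the image of $\pi$ under insertion of the unit into the third slot, naturality of $\tau$ identifies $\tau_{23}$ followed by that insertion with insertion into the second slot, giving $\tau_{23}(\pi_1)=\pi_{13}$; the identity $\tau_{12}(\pi_2)=\pi_{13}$ is symmetric. (Pleasantly, this step does not even require $\tau(\pi)=\pi$.)

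With this established, I bring in the S-structure. Tensoring the defining identity $\tau=\lambda_\pi\rho_\pi$ on $A^{\otimes 2}$ with $\id_A$ on the right yields $\tau_{12}=\lambda_{\pi_1}\rho_{\pi_1}$ on $A^{\otimes 3}$, and tensoring with $\id_A$ on the left yields $\tau_{23}=\lambda_{\pi_2}\rho_{\pi_2}$; here one uses that $\pi\otimes 1$ and $1\otimes\pi$ really are $\pi_1,\pi_2\in\Gamma(A^{\otimes 3})$ and that $\lambda_\pi\rho_\pi$ tensored with $\id_A$ is left–right multiplication by the corresponding element. Applying these operators gives $\tau_{12}(\pi_2)=\pi_1\pi_2\pi_1$ and $\tau_{23}(\pi_1)=\pi_2\pi_1\pi_2$, and combining with the display above gives $\pi_1\pi_2\pi_1=\pi_{13}=\pi_2\pi_1\pi_2$, the braid relation.

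The one step requiring genuine care is the diagram chase that identifies both $\tau_{12}(\pi_2)$ and $\tau_{23}(\pi_1)$ with $\pi_{13}$: one must track the associativity and unit isomorphisms while ``sliding the unit past'' an adjacent tensor factor. This is routine coherence bookkeeping rather than a substantive obstacle; the rest of the argument is formal computation in the algebra $\Gamma(A^{\otimes 3})$.
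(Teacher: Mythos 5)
Your argument is correct and is essentially the paper's own proof presented symmetrically: you show $\tau_{12}(\pi_2)=\pi_{13}=\tau_{23}(\pi_1)$ and combine with $\tau_{12}=\lambda_{\pi_1}\rho_{\pi_1}$, $\tau_{23}=\lambda_{\pi_2}\rho_{\pi_2}$, while the paper chains $\pi_1\pi_2\pi_1=\pi_2\cdot(2\;3)\pi_1\cdot\pi_1=\pi_2\pi_1\cdot(1\;2)(2\;3)\pi_1=\pi_2\pi_1\pi_2$, but the ingredients (conjugation by $\pi_i$ realizes the transposition $(i\;i{+}1)$, and the effect of the monoidal symmetry on the unit-inserted elements $\pi_j$) are identical. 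Your explicit reduction of $(\pi_1\pi_2)^3=1$ to the braid relation via $\pi_i^2=1$, and your observation that $\tau(\pi)=\pi$ is not actually needed, are both correct and slightly more scrupulous than the paper's terse ending.
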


\begin{proof}
Evaluating the identity $\tau=\lambda_{\pi} \rho_{\pi}$ at~1 gives $\pi^2=1$. Note that the equation $\tau=\lambda_{\pi} \rho_{\pi}$ means that $\tau$ is conjugation by $\pi$. We thus have $\tau(\pi)=\pi$. Next,
\begin{displaymath}
\pi_1 \pi_2 \pi_1 = \pi_2 \cdot (2\;3) \pi_1 \cdot \pi_1 = \pi_2 \pi_1 \cdot (1\;2)(2\;3)\pi_1.
\end{displaymath}
Here we are using that conjugation by $\pi_1$ and $\pi_2$ give the actions of the transpositions $(1\;2)$ in $(2\;3)$ on $A^{\otimes 3}$. Since $(1\;2)(2\;3)\pi_1=\pi_2$, we obtain the braid equation.
\end{proof}

Fix $n \ge 2$. For $1 \le i \le n-1$, let $\pi_i \in \Gamma(A^{\otimes n})$ be the element $\pi_i=1^{\otimes (i-1)} \otimes \pi \otimes 1^{\otimes (n-i-1)}$. It follows from the above proposition that these elements also satisfy the braid relation, i.e., $(\pi_i \pi_{i+1})^3=1$ for $1 \le i \le n-2$. We therefore have a group homomorphism
\begin{displaymath}
\pi \colon \fS_n \to \Gamma(A^{\otimes n})^{\times}, \qquad s_i \mapsto \pi_i,
\end{displaymath}
where $\fS_n$ is the symmetric group, the target is the unit group of the algebra $\Gamma(A^{\otimes n})$, and the $s_i$'s are the Coxeter generators of $\fS_n$. We write $\pi(\sigma)$ for the image of $\sigma \in \fS_n$, so $\pi(s_i)=\pi_i$ by definition. This homomorphism allows us to define an action of $\fS_n \times \fS_n$ on $A^{\otimes n}$ by letting $(\sigma, \sigma')$ act by $\lambda_{\pi(\sigma)} \rho_{\pi(\sigma')}^{-1}$; the diagonal $\fS_n$ acts in the usual manner by permuting tensor factors.

The final topic we discuss here is the notion of splitting of S-algebras. Let $A$ be an arbitrary S-algebra in $\fC$. An \defn{S-splitting} of $A$ is an isomorphism of S-algebras $A \cong \uEnd(V)$ for some rigid object $V$; we note then that $\udim(V)=\udeg(A)$. We say that $A$ is \defn{S-split} if such an isomorphism exists. We use the S- prefix here for clarity, as we say that an algebra $A$ is \defn{split} if it is isomorphic to $\uEnd(V)$ for some rigid $V$. We give two simple examples.

\begin{example}
(a) Let $\fC$ be the category of complex vector spaces, and let $A=\bone$. This admits two S-structures $(\pm 1, \pm 1)$. The isomorphism $A \cong \uEnd(\bC)$ is an S-splitting in the $\pi=1$ case. The $\pi=-1$ case is not S-split, as no vector space has dimension $-1$.

(b) Now let $\fD$ be the category of super complex vector spaces. We can regard the algebra $A$ from (a) in $\fD$ as well. The isomorphisms
\begin{displaymath}
A \cong \uEnd(\bC^{1|0}), \qquad A \cong \uEnd(\bC^{0|1})
\end{displaymath}
are S-splittings when $\pi=1$ and $\pi=-1$ respectively. We thus see that while the $\pi=-1$ case is not S-split in $\fC$, it becomes so in the larger category $\fD$; we explore this idea systematically in \S \ref{s:split}.
\end{example}

\subsection{Separable algebras} \label{ss:separable}

Recall that an algebra $A$ is \defn{separable} if the multiplication map $\mu \colon A \otimes A \to A$ has a splitting in the category of $(A,A)$-bimodules; here the bimodule structure on the source is $a(x \otimes y)b=ax \otimes yb$. One important property of separable algebras is that if $M$ is a right $A$-module and $N$ is a left $A$-module then $M \otimes_A N$ is a summand of $M \otimes N$; in particular, the tensor product exists if $\fC$ is Karoubian. We will require the following (well-known) result:

\begin{proposition} \label{prop:sepss}
Let $A$ be a separable algebra. Then every $A$-module is a summand of one of the form $A \otimes X$, with $X \in \fC$. In particular, if $\fC$ is semi-simple then $\Mod_A$ is too.
\end{proposition}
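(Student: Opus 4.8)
The plan is to use the separability splitting directly. Let $s \colon A \to A \otimes A$ be an $(A,A)$-bimodule splitting of the multiplication map $\mu$, so $\mu \circ s = \id_A$ and $s$ is a bimodule map for the bimodule structure $a \cdot (x \otimes y) \cdot b = ax \otimes yb$ on the source $A \otimes A$. Given a left $A$-module $M$, I would consider the composite
\begin{displaymath}
\xymatrix@C=3em{
M \ar[r]^-{s \otimes \id} & A \otimes A \otimes M \ar[r]^-{\id \otimes \mathrm{act}} & A \otimes M, }
\end{displaymath}
where the second arrow uses the $A$-action on $M$, and compare it with the action map $\mathrm{act} \colon A \otimes M \to M$. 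Here $A \otimes M$ is given its relatively free $A$-module structure (action on the left factor $A$). First I would check that the composite $M \to A \otimes M$ is a morphism of left $A$-modules: this is where the \emph{left} $A$-linearity of $s$ is used, i.e.\ that $s(ax) = (a \otimes 1) \cdot s(x) = (\lambda_a \otimes \id)(s(x))$. Second I would check that $\mathrm{act} \circ (\id \otimes \mathrm{act}) \circ (s \otimes \id) = \mu \circ s = \id_M$ on $M$ --- wait, more carefully: the composite $A \otimes A \otimes M \to A \otimes M \to M$ equals the composite $A \otimes A \otimes M \to A \otimes M \to M$ obtained by first applying $\mu \otimes \id$ then the action, by associativity of the $A$-action; and $\mu \circ s = \id_A$, so the total composite $M \to M$ is $\mathrm{act}_M \circ (\eta\text{-ish})$... actually the cleanest bookkeeping: $\mathrm{act}_M \circ (\mathrm{act}_M \text{ on last two}) \circ (s \otimes \id_M) = \mathrm{act}_M \circ (\mu \otimes \id_M) \circ (s \otimes \id_M) = \mathrm{act}_M \circ ((\mu \circ s) \otimes \id_M) = \mathrm{act}_M \circ (\id_A \otimes \id_M)$, and then using the unit axiom for the module $M$ this collapses to $\id_M$ after tensoring $s$ with the unit appropriately. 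I would write this out with one commuting diagram.

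The upshot of these two checks is that $M$ is a retract, in $\Mod_A$, of the relatively free module $A \otimes M$: the map $M \to A \otimes M$ above is an $A$-module section of the $A$-module surjection $\mathrm{act}_M \colon A \otimes M \to M$. Hence every $A$-module is relatively projective, which is the first assertion of the proposition. For the second assertion, suppose $\fC$ is semi-simple; I would first note that semi-simplicity of $\fC$ makes $\fC$ abelian, Karoubian, and with every object a direct sum of simples. Then for any $A$-module $M$, write $M$ as a summand of $A \otimes M$ in $\Mod_A$. Since $\Mod_A$ is abelian (the forgetful functor to $\fC$ is exact and faithful, so kernels and cokernels are computed in $\fC$), it suffices to show every short exact sequence $0 \to M' \to M \to M'' \to 0$ in $\Mod_A$ splits; equivalently, that every $A$-module is projective in $\Mod_A$. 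Since $M$ is a summand of $A \otimes M$, it is enough to show relatively free modules $A \otimes X$ are projective in $\Mod_A$. But $\Hom_{\Mod_A}(A \otimes X, -) = \Hom_{\fC}(X, -)$ by the free-forgetful adjunction, and $\Hom_{\fC}(X, -)$ is exact because $\fC$ is semi-simple. Therefore $A \otimes X$ is projective, hence so is its summand $M$, so every object of $\Mod_A$ is projective, so $\Mod_A$ is semi-simple.

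The main obstacle I anticipate is purely bookkeeping: writing the retraction diagram in a way that correctly tracks which $A$-module structure is in play at each stage (the free structure on $A \otimes M$ versus the ``internal'' one coming from the action on $M$), and invoking the bimodule property of $s$ on the correct side. There is no conceptual difficulty --- this is the standard argument that separable algebras have the property that all modules are relatively projective --- so I would present it compactly with a single commuting diagram for the section property and a one-line adjunction argument for projectivity of relatively free modules. A minor point to address is that $\Mod_A$ is abelian with kernels and cokernels computed in $\fC$; since $\fC$ is assumed abelian in the semi-simple case this is routine and I would state it without belaboring it.
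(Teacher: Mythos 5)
Your proposal is correct and is essentially the paper's argument, just unwound: the paper presents the retraction tersely as $M = A \otimes_A M$ being a summand of $(A\otimes A)\otimes_A M = A\otimes M$, while you write out the section map explicitly. Both are the standard "separability idempotent gives a section of the action map" argument, and the adjunction step $\Hom_A(A\otimes X,-) = \Hom_\fC(X,-)$ for the semi-simple case is identical.

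Two small bookkeeping points you would need to tidy. First, your displayed arrow $M \xrightarrow{s\otimes\id} A\otimes A\otimes M$ is ill-typed: $s\otimes\id$ has domain $A\otimes M$, so you must precompose with $\eta\otimes\id_M\colon M\cong\bone\otimes M\to A\otimes M$ (or equivalently use $m\mapsto s(1)\otimes m$); you acknowledge this parenthetically but the diagram as written does not compile. Second, the claim that only the \emph{left} $A$-linearity of $s$ is needed to make the section $A$-linear is not quite right: what you actually use is that the separability idempotent $s(1)$ commutes with $A$ in the bimodule sense, i.e.\ $(a\otimes 1)\cdot s(1) = s(1)\cdot(1\otimes a)$, and this requires \emph{both} the left and right $A$-linearity of $s$ (via $a\cdot s(1) = s(a) = s(1)\cdot a$). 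With these corrections the argument is complete. The paper also notes in passing that finite length is inherited from $\fC$; this is a harmless omission on your part given the way the proposition is phrased.
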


\begin{proof}
Let $M$ be a left $A$-module. Then $M=A \otimes_A M$ is a summand of $(A \otimes A) \otimes_A M=A \otimes M$, as required. (Note that the module structure on $A \otimes M$ here just comes from $A$.) Suppose $\fC$ is semi-simple. Then $\Hom_A(A \otimes X, -)=\Hom_{\fC}(X, -)$ is exact, and so $A \otimes X$ is a projective $A$-module. Hence every $A$-module is projective. Finite length is inherited from $\fC$.
\end{proof}

We have a useful criterion for separability of S-algebras:

\begin{proposition} \label{prop:separable}
Let $A$ be an S-algebra.
\begin{enumerate}
\item If there exists an element $u \in \Gamma(A)$ with $\epsilon(u)=1$ then $A$ is separable.
\item Such an element $u$ exists if either $\udim(A) \ne 0$, or $\fC$ is semi-simple.
\end{enumerate}
\end{proposition}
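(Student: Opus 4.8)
The plan is to dispose of (b) quickly and then, for (a), to construct an explicit separability idempotent out of $u$ and the invariant $\pi$.

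For (b): if $\udim(A)\neq 0$, then $\udeg(A)=\epsilon(1)$ is a nonzero scalar by Corollary~\ref{cor:dim-deg}, and $u:=\epsilon(1)^{-1}\cdot 1$ satisfies $\epsilon(u)=1$. If instead $\fC$ is semisimple, I would first observe that $\epsilon\neq 0$: otherwise the relation $\mu=(\id\otimes\epsilon)\rho_\pi$ forces $\mu=0$, contradicting $\mu\circ(\eta\otimes\id_A)=\id_A$. Since $\End(\bone)=k$, the unit object is simple in a semisimple category, so the nonzero morphism $\epsilon\colon A\to\bone$ is a split epimorphism; any section $u$ lies in $\Gamma(A)$ and satisfies $\epsilon(u)=1$.

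For (a), I would show that $p:=(\rho_u\otimes\id_A)(\pi)\in\Gamma(A\otimes A)$ --- informally $\sum\pi' u\otimes\pi''$, where $\pi=\sum\pi'\otimes\pi''$ --- is a separability idempotent, i.e.\ that $\mu(p)=1$ and that $p$ is bimodule-central ($(\lambda_a\otimes\id_A)(p)=(\id_A\otimes\rho_a)(p)$ for all $a\in\Gamma(A)$); then $a\mapsto(\lambda_a\otimes\id_A)(p)$ is a section of $\mu\colon A\otimes A\to A$ in $(A,A)$-bimodules, so $A$ is separable. Both properties reduce to two identities about $\pi$, which I would isolate as a preliminary lemma: for all $x,a\in\Gamma(A)$,
\[
\sum\pi' x\pi''=\epsilon(x)\cdot 1
\qquad\text{and}\qquad
\sum a\pi'\otimes\pi''=\sum\pi'\otimes\pi'' a .
\]
Granting these, $\mu(p)=\sum\pi' u\pi''=\epsilon(u)\cdot 1=1$ by the first identity and the hypothesis, while multiplying the second identity on the right by $u\otimes 1$ in $A\otimes A$ yields exactly the centrality of $p$.

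It remains to prove the two identities. For the first, I would evaluate the defining relation $\tau=\lambda_\pi\rho_\pi$ on the invariant $1\otimes x$ of $A\otimes A$: since $\tau(1\otimes x)=x\otimes 1$ and $\rho_\pi(1\otimes x)=\sum\pi'\otimes x\pi''$, left multiplication by a second copy $\sum\sigma'\otimes\sigma''$ of $\pi$ gives $\sum\sigma'\pi'\otimes\sigma'' x\pi''=x\otimes 1$; applying $\epsilon\otimes\id_A$ and collapsing the $\sigma$-sum via the zig-zag relation $\sum\epsilon(\sigma' z)\sigma''=z$ (which follows from the two zig-zag identities of the self-duality together with $\tau(\pi)=\pi$) leaves $\sum\pi' x\pi''=\epsilon(x)\cdot 1$. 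For the second identity, both sides are invariants of the self-dual object $A$, so it suffices to compare their images under the isomorphism $\Gamma(A\otimes A)\xrightarrow{\ \sim\ }\End(A)$, $f\mapsto\big(x\mapsto\sum\epsilon(xf')f''\big)$; using $\sum\epsilon(z\pi')\pi''=z$ one checks that both sides are sent to the morphism $x\mapsto xa$, hence agree. (One can equally verify the centrality of $p$ directly by the same probing, at the cost of also invoking the cyclicity $\epsilon(ab)=\epsilon(ba)$, i.e.\ the symmetry of $\epsilon\mu$.) The only step that is not pure bookkeeping with the zig-zag relations is the first identity, and I expect it to be the main obstacle; the argument above shows it is in fact short once one thinks to feed $1\otimes x$ into the relation $\tau=\lambda_\pi\rho_\pi$.
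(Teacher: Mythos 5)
Your proof is correct, and in fact the bimodule section you end up with is the same as the paper's: the element you call $p=(\rho_u\otimes\id)(\pi)$ coincides with $(1\otimes u)\pi$ (your second identity, applied with $\tau(\pi)=\pi$, gives $(\rho_u\otimes\id)(\pi)=(\id\otimes\lambda_u)(\pi)$), and so your section $x\mapsto(\lambda_x\otimes\id)(p)$ is the paper's $\sigma(x)=(x\otimes u)\pi$. Where you diverge is in the verification. The paper checks the bimodule property directly from $(b\otimes 1)\pi=\pi(1\otimes b)$, and checks $\mu\sigma=\id$ in one line from the defining relations $\mu=(\id\otimes\epsilon)\rho_\pi$ and $\pi^2=1$, namely $\mu((x\otimes u)\pi)=(\id\otimes\epsilon)((x\otimes u)\pi^2)=x\,\epsilon(u)=x$. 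You instead route everything through the notion of a separability idempotent and two auxiliary identities, of which the first, $\sum\pi'x\pi''=\epsilon(x)\cdot 1$, you correctly flag as the only non-trivial step --- but notice that this identity is not needed at all: the short computation above makes $\mu(p)=1$ immediate, so the effort you spend probing $\tau=\lambda_\pi\rho_\pi$ against $1\otimes x$ and collapsing with the zig-zag can be skipped. (Your derivation of that identity is nonetheless correct, as is your use of the symmetry of $\epsilon\mu$ and $\tau(\pi)=\pi$.) Your treatment of (b) matches the paper; your alternative argument that $\epsilon\neq 0$ via $\mu=(\id\otimes\epsilon)\rho_\pi$ and unitality is a fine substitute for the paper's appeal to the perfectness of the pairing $\epsilon\mu$.
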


\begin{proof}
(a) Define $\sigma \colon A \to A \otimes A$ by $\sigma(x) = (x \otimes u) \pi$. We have
\begin{displaymath}
\sigma(axb) = (a \otimes 1)(x \otimes u)(b \otimes 1) \pi = (a \otimes 1)(x \otimes u)\pi(1 \otimes b) = (a \otimes 1) \sigma(x) (1 \otimes b)
\end{displaymath}
which shows that $\sigma$ is a map of bimodules. We have
\begin{displaymath}
\mu(\sigma(x)) = (\id \otimes \epsilon)((x \otimes u)\pi^2) = x.
\end{displaymath}
Thus $\sigma$ is a section of $\mu$, and so $A$ is separable.

(b) If $\udim(A) \ne 0$ then $\epsilon(1) \ne 0$, and we can take $u=\epsilon(1)^{-1}$. If $\fC$ is semi-simple  and $A$ is non-zero then the surjection $\epsilon \colon A \to \bone$ splits, which yields the element $u$. (Note that $\epsilon$ is non-zero since $\epsilon \mu$ is a perfect pairing. Since $\bone$ is simple, $\epsilon$ is surjective.)
\end{proof}

\begin{corollary} \label{cor:azumss}
If $\fC$ is semi-simple and $A$ is an S-algebra then $\Mod_A$ is semi-simple.
\end{corollary}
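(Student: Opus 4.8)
The plan is to deduce Corollary~\ref{cor:azumss} directly from the two results immediately preceding it. First I would observe that the statement is vacuous (or trivial) if $A=0$, so I may assume $A$ is non-zero. Since $\fC$ is semi-simple, part~(b) of Proposition~\ref{prop:separable} guarantees the existence of an element $u \in \Gamma(A)$ with $\epsilon(u)=1$; then part~(a) of the same proposition tells us that $A$ is a separable algebra. At this point I would simply invoke Proposition~\ref{prop:sepss}: for a separable algebra $A$ in a semi-simple category $\fC$, the category $\Mod_A$ is semi-simple. This chain of three citations is the entire argument.

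The only point that deserves a word of care is the case $A=0$. In that case $\Mod_A$ is the zero category, which is (trivially) semi-simple, so the conclusion still holds; alternatively one notes that the hypothesis ``$\fC$ semi-simple and $A$ an S-algebra'' is usually applied with $A$ non-zero, and Proposition~\ref{prop:separable}(b) as stated already builds in the non-vanishing of $A$ implicitly through the splitting of $\epsilon$. I would phrase the proof so as to sidestep this: ``If $A=0$ the claim is trivial; otherwise Proposition~\ref{prop:separable} shows $A$ is separable, and then Proposition~\ref{prop:sepss} shows $\Mod_A$ is semi-simple.''

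There is no real obstacle here — the corollary is a formal consequence of machinery already in place. If anything, the ``hard part'' was already done in proving Propositions~\ref{prop:sepss} and~\ref{prop:separable}; the corollary is just the convenient packaging of those two facts for the S-algebra setting, and the proof is a one-liner.

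\begin{proof}
If $A=0$ the claim is trivial, so assume $A \ne 0$. By Proposition~\ref{prop:separable}(b), since $\fC$ is semi-simple there is an element $u \in \Gamma(A)$ with $\epsilon(u)=1$, and then Proposition~\ref{prop:separable}(a) shows that $A$ is separable. The semi-simplicity of $\Mod_A$ now follows from Proposition~\ref{prop:sepss}.
\end{proof}
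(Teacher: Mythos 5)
Your proof is correct and is exactly the argument the paper has in mind: the corollary is placed immediately after Proposition~\ref{prop:separable} precisely so that part~(b) gives the element $u$, part~(a) gives separability, and Proposition~\ref{prop:sepss} finishes. Your extra remark about $A=0$ is a sensible bit of bookkeeping (the paper's proof of Proposition~\ref{prop:separable}(b) does quietly assume $A\ne 0$), but otherwise there is nothing to add.
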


\subsection{Azumaya algebras} \label{ss:azumaya}

An \defn{Azumaya algebra} in $\fC$ is a rigid algebra $A$ for which the natural map
\begin{displaymath}
A \otimes A^{\rm op} \to \uEnd(A), \qquad a \otimes b \mapsto (x \mapsto axb)
\end{displaymath}
is an isomorphism. If $A$ is an Azumaya algebra then so is the opposite algebra $A^{\op}$. If $B$ is a second Azumaya algebra then $A \otimes B$ is also an Azumaya algebra. If $V$ is a rigid object of $\fC$ then $A=\uEnd(V)$ is an Azumaya algebra; we say that algebras of this form are \defn{split}. See \cite{Pareigis4} for these statements.

Let $A$ be an Azumaya algebra, and let ${}_A\Mod_A$ be the category of $(A,A)$-bimodules. In what follows, we assume that $\otimes_A$ exists and is well-behaved on this category; this is the case if, e.g., $\fC$ is abelian, or $\fC$ is Karoubian and $A$ is separable. The functor
\begin{displaymath}
\fC \to {}_A\Mod_A, \qquad X \mapsto X \otimes A,
\end{displaymath}
is an equivalence; in fact, this property characterizes Azumaya algebras \cite[Proposition~1]{Pareigis4}. This equivalence is in fact one of $\fC$-module categories. It follows that the category of $\fC$-linear endofunctors of ${}_A\Mod_A$ is equivalent to $\fC$; moreover, this equivalence is monoidal, using composition of endofunctors and the tensor product on $\fC$.

Now, let $B=A \otimes A$. This algebra carries a natural symmetry isomorphism $\tau \colon B \to B$. Given a $B$-module $M$, define $M^{\dag}$ to be the the $B$-bimodule with underlying object $M$, but where the right action of $B$ is twisted by the symmetry $\tau$. Then $M \mapsto M^{\dag}$ is a $\fC$-linear autoequivalence of ${}_B\Mod_B$. By the above remarks, we have a functorial isomorphism $M^{\dag} \cong L \otimes M$ for some object $L$ of $\fC$, which is unique up to isomorphism. Moreover, since the square of  $(-)^{\dag}$ is canonically the identity functor, there is a natural isomorphism $i \colon L^{\otimes 2} \to \bone$. We let $\eta(A)$ be the class of $L$ in $\Pic(\fC)[2]$. This is an important invariant of $A$.

In fact, there is a more refined invariant we can attach to $A$; we thank Deligne for this remark. Define $\wt{\Pic}(\fC)[2]$ to be the group of isomorphism classes of pairs $(M, j)$, where $M$ is an invertible object of $\fC$ and $j \colon M^{\otimes 2} \to \bone$ is an isomorphism\footnote{We note that $\wt{\Pic}(-)[2]$ is one symbol; it is not the 2-torsion in a group $\wt{\Pic}(-)$.}. We have a natural short exact sequence
\begin{displaymath}
1 \to k^{\times}/(k{^\times})^2 \to \wt{\Pic}(\fC)[2] \to \Pic(\fC)[2] \to 1
\end{displaymath}
We define $\eta(A)$ to be the class of $(L,i)$ in $\wt{\Pic}(\fC)[2]$. We note that if $k$ is algebraically closed (or simply quadratically closed) then $\eta(A)$ and $\tilde{\eta}(A)$ carry the same information, but in general $\tilde{\eta}(A)$ carries more information.

The condition $\eta(A)=1$ means that there is an isomorphism $i \colon B \to B^{\dag}$ of $B$-bimodules; the condition $\tilde{\eta}(A)=1$ means that there is such an isomorphism satisfying $i^2=\id$. Thus both conditions make sense even in settings where ${}_A\Mod_A$ lacks a tensor structure.

\begin{example}
We give a few examples of the above invariants.
\begin{enumerate}
\item If $A=\uEnd(V)$ is split then $\tilde{\eta}(A)=1$. Indeed, we have
\begin{displaymath}
B = V \otimes V^* \otimes V \otimes V^*.
\end{displaymath}
Let $i \colon B \to B^{\dag}$ be the map switching the two $V^*$ factors and fixing the two $V$ factors. Then $i$ is an isomorphism of $B$-bimodules and $i^2=\id$.
\item Let $\fC$ be the category of complex super vector spaces. Let $A=\bC[x]/(x^2=1)$ where $x$ is odd, which is an Azumaya algebra in $\fC$. Then we have an isomorphism of $B$-modules $B \cong \bC^{0|1} \otimes B^{\dag}$, and so $\eta(A)$ is the non-trivial class in $\Pic(\fC) \cong \bZ/2$. Note that $\eta(A)$ has sign $-1$.
\item Let $\fC$ be the category of real super vector spaces. Let $A=\bR[x]/(x^2=1)$ be the real form of the above algebra. Then $\eta(A^{\otimes 2})$ is trivial, but $\tilde{\eta}(A^{\otimes 2})$ is the non-trivial square class in $\bR^{\times}$. We thank Deligne for this example.
\item Let $\fC$ be the category of complex $\bZ/2 \times \bZ/2$ graded vector spaces, where the symmetry uses the super sign rule with respect to total degree. Let $A=\bC[x,y]/(x^2=y^2=1)$, where $x$ has degree $(1,0)$ and $y$ has degree $(0,1)$. Then $\eta(A)$ is the class of the one-dimensional vector space placed in bi-degree $(1,1)$. In particular, $\eta(A)$ is non-trivial but has sign $+1$. We thank S.~Kriz for this example.
\qedhere
\end{enumerate}
\end{example}

\begin{remark}
Suppose that $\fC$ is pre-Tannkian and $k$ is algebraically closed. Let $A$ be an Azumaya algebra in $\fC$ such that $\udim(A) \ne 0$ and $\eta(A)$ has sign $+1$ (which follows from the first hypothesis in characteristic $\ne 2$). Deligne \cite{DeligneLetter3} has then shown that $\eta(A)=1$.
\end{remark}

\subsection{The comparison theorem}

The following theorem explains the precise relationship between S-algebras and Azumaya algebras.

\begin{theorem} \label{thm:azu-T}
Let $A$ be an algebra in $\fC$.
\begin{enumerate}
\item If $A$ is Azumaya with $\tilde{\eta}(A)=1$ and ${}_A\Mod_A \cong \fC$, then $A$ admits an S-structure.
\item If $A$ admits an S-structure then $A$ is Azumaya with $\tilde{\eta}(A)=1$.
\end{enumerate}
\end{theorem}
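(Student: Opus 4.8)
The plan is to treat the two directions separately, with direction (b) being the more structural one and direction (a) the more hands-on one. For direction (b): given an S-structure $(\pi,\epsilon)$ on $A$, I have already shown (Proposition~\ref{prop:braid} and the discussion following it) that there is an $\fS_n\times\fS_n$-action on $A^{\otimes n}$, where the two copies act by $\lambda_{\pi(\sigma)}$ and $\rho_{\pi(\sigma')}^{-1}$. The key point is the case $n=2$: the element $\pi\in\Gamma(A\otimes A)$ gives an invariant element of $B=A\otimes A$ with $\pi^2=1$, and conjugation by $\pi$ realizes the symmetry $\tau$ of $B$. First I would use this to write down an explicit isomorphism $i\colon B\to B^{\dag}$ of $B$-bimodules, namely left multiplication by $\pi$ (or right multiplication by $\pi^{-1}=\pi$): because $\tau=\lambda_\pi\rho_\pi$, twisting the right action by $\tau$ is undone precisely by conjugating by $\pi$, so $\lambda_\pi$ intertwines the untwisted and twisted bimodule structures; and $\lambda_\pi^2=\lambda_{\pi^2}=\id$. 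This shows $\tilde\eta(A)=1$ once we know $A$ is Azumaya. To see $A$ is Azumaya: I already proved an S-algebra is rigid and self-dual, with coevaluation $\pi$ and evaluation $\epsilon\mu$; so I must show the canonical map $A\otimes A^{\op}\to\uEnd(A)$ is an isomorphism. I would identify $\uEnd(A)=A\otimes A^*$ with $A\otimes A$ via the self-duality, check that under this identification the map $a\otimes b\mapsto(x\mapsto axb)$ becomes $a\otimes b\mapsto (a\otimes 1)\cdot\lambda_\pi\rho_\pi\cdots$ — more precisely it becomes the composite $A\otimes A \xrightarrow{\id\otimes\lambda_\pi\text{-type}} A\otimes A$ built from $\pi$ — and exhibit an explicit two-sided inverse using $\pi$ and $\epsilon$ again, in the same style as the separability computation in Proposition~\ref{prop:separable}(a). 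The self-duality plus the existence of such an inverse is exactly what ``Azumaya'' requires here.

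For direction (a): I am given that $A$ is Azumaya, $\tilde\eta(A)=1$, and ${}_A\Mod_A\cong\fC$ (as $\fC$-module categories, via $X\mapsto X\otimes A$). The hypothesis $\tilde\eta(A)=1$ supplies an isomorphism $i\colon B\to B^{\dag}$ of $B$-bimodules with $i^2=\id$, where $B=A\otimes A$; equivalently, an invariant element $\pi\in\Gamma(B)=\Gamma(A\otimes A)$ (the image of $1$ under $i$, viewed appropriately) such that conjugation by $\pi$ implements $\tau$ and $\pi^2=1$. This is the candidate for the first half of the S-structure, and the relation $\tau=\lambda_\pi\rho_\pi$ is then forced. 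To get $\epsilon$: because ${}_A\Mod_A\cong\fC$, the unit object of this module-category-with-tensor corresponds to $A$ itself, and I want the evaluation/trace map $A\otimes A\to\bone$; concretely, since $A$ is Azumaya the rank-one piece gives a canonical ``reduced trace'' $\epsilon\colon A\to\bone$, characterized by $\epsilon\mu$ being the evaluation for a self-duality of $A$ induced by $\pi$. I would define $\epsilon$ so that $\epsilon\mu$ is the evaluation map dual to the coevaluation $\pi$ (the self-duality already exists abstractly because $A$ is rigid and the Azumaya condition forces $A\cong A^*$), normalized by the ${}_A\Mod_A\cong\fC$ identification so that $\epsilon$ is the genuine trace rather than a scalar multiple. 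Then the two S-structure identities $\tau=\lambda_\pi\rho_\pi$ and $\mu=(\id\otimes\epsilon)\rho_\pi$ must be verified: the first is exactly the statement that $i$ implements $\tau$, and the second is a compatibility between $\epsilon$, $\mu$, and $\pi$ that I expect to follow from the zig-zag identity for the $(\pi,\epsilon\mu)$-duality together with the Azumaya isomorphism $A\otimes A^{\op}\cong\uEnd(A)$.

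The main obstacle I anticipate is pinning down $\epsilon$ correctly in direction (a), i.e.\ the normalization. The pair $(i,i^2=\id)$ only gives $\pi$ up to sign (consistent with the remark that $(\pi,\epsilon)$ and $(-\pi,-\epsilon)$ are the only S-structures), and $\epsilon$ has to be matched to $\pi$ with the right sign and the right scalar so that $\mu=(\id\otimes\epsilon)\rho_\pi$ holds on the nose — not merely up to an invertible scalar in $k^\times$. This is where the hypothesis ${}_A\Mod_A\cong\fC$ (as opposed to merely $\eta(A)=1$, which would only give the unrefined statement) does real work: it rigidifies the Brauer-trivial Azumaya algebra to an actual $\uEnd$, and I would extract $\epsilon$ as the trace coming from that splitting in the module-category sense. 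Verifying that $\mu=(\id\otimes\epsilon)\rho_\pi$ then becomes a diagram chase with the zig-zag identities; I would also double-check the $i^2=\id$ hypothesis is genuinely needed (it is what forces $\pi^2=1$ rather than $\pi^2$ being a central unit, and hence what makes $\tilde\eta$ rather than $\eta$ the right invariant). All the remaining verifications — bimodule-map property of $\lambda_\pi$, the explicit inverse to $A\otimes A^{\op}\to\uEnd(A)$, symmetry of the duality maps — are routine string-diagram manipulations of the kind already carried out in this section, so I would state them and leave the calculations to the reader.
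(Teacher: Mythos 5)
Your overall plan is sound and close to the paper's, but there is one concrete error and one place where the paper's route is both cleaner and avoids exactly the normalization worry you flag.

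The error: you propose $i=\lambda_\pi$ as the $B$-bimodule isomorphism $B\to B^\dag$. This does not work. For $i$ to be a bimodule map $B\to B^\dag$ you need $i(bx)=b\,i(x)$ (the left action is untwisted on both sides) and $i(xb)=i(x)\,\tau(b)$ (the right action is twisted by $\tau$ on the target). The map $\lambda_\pi$ fails the left condition: $\lambda_\pi(bx)=\pi b x$ while $b\,\lambda_\pi(x)=b\pi x$, and these differ unless $\pi$ is central. Your alternative $\rho_\pi$ is the correct choice: $\rho_\pi(bx)=bx\pi=b\,\rho_\pi(x)$ for the left action, and $\rho_\pi(xb)=xb\pi=x\pi\tau(b)=\rho_\pi(x)\,\tau(b)$ for the right action, using $b\pi=\pi\tau(b)$, which is a reformulation of $\tau=\lambda_\pi\rho_\pi$ and $\pi^2=1$. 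The slogan ``conjugation by $\pi$ undoes the twist'' is right, but it is the right-multiplication half of the conjugation, not the left, that gives the bimodule map; $\lambda_\pi$ alone is neither left- nor right-linear.

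On direction (a), your plan to first extract $\epsilon$ from an abstract self-duality of $A$ (forced by the Azumaya condition) and then verify $\mu=(\id\otimes\epsilon)\rho_\pi$ by a diagram chase is workable in principle but invites exactly the normalization headache you anticipate. The paper's route is more economical and dissolves the issue: it considers $\phi=\mu\circ\rho_\pi\colon A\otimes A\to A$, checks directly that $\phi$ is a map of $A$-bimodules where the bimodule structure on $A\otimes A$ comes from the first tensor factor (a one-line computation using $a\otimes 1$ and $1\otimes b$ commuting past $\pi$ appropriately), and then invokes the equivalence ${}_A\Mod_A\simeq\fC$ to conclude that $\phi$ must have the form $\id\otimes\epsilon$ for a unique $\epsilon\colon A\to\bone$. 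This simultaneously \emph{defines} $\epsilon$ and \emph{proves} $\mu=(\id\otimes\epsilon)\rho_\pi$ (since $\rho_\pi^2=\id$), with no separate normalization step or trace construction needed. In direction (b), the paper likewise avoids an ``exhibit an inverse'' step: after identifying $A^*$ with $A$ via the self-duality $(\pi,\epsilon\mu)$, the canonical map $A\otimes A^{\op}\to\uEnd(A)=A\otimes A^*$ is computed to be exactly $\rho_\pi$, which is visibly an isomorphism (its own inverse), and then $\rho_\pi$ is re-read as the bimodule isomorphism $B\to B^\dag$ squaring to the identity, giving $\tilde\eta(A)=1$. You would do well to replace your inversion plan with this direct computation, and to correct the $\lambda_\pi$ claim.
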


\begin{proof}
(a) Since $\tilde{\eta}(A)=1$, we have an isomorphism
\begin{displaymath}
i \colon A \otimes A \to (A \otimes A)^{\dag}
\end{displaymath}
of $A \otimes A$ bimodules that squares to the identity. Since the left module structure of the target is the usual one, we have $i=\rho_{\pi}$ for some $\pi \in \Gamma(A \otimes A)$ satisfying $\pi^2=1$. Since $\rho_{\pi}$ is twisted right linear, we have
\begin{displaymath}
\rho_{\pi}(x \otimes y)=\rho_{\pi}(1) (y \otimes x) = \lambda_{\pi} \tau(x \otimes y),
\end{displaymath}
and so $\rho_{\pi} =\lambda_{\pi} \tau$. Since $\pi^2=1$, we have $\tau=\lambda_{\pi} \rho_{\pi}$.

Consider the map $\phi \colon A \otimes A \to A$ defined by $\phi = \mu \circ \rho_{\pi}$. We have
\begin{displaymath}
\phi(axb \otimes y) = \mu((axb \otimes y)\pi) = \mu((a \otimes 1)(x \otimes y) \pi (1 \otimes b))=a \mu((x \otimes y)\pi) b = a \phi(x \otimes y) b.
\end{displaymath}
We thus see that $\phi$ is a map of $A$-bimodules, where the module structure on $A \otimes A$ comes from the first factor. From the equivalence ${}_A\Mod_A=\fC$, we see that $\phi=\id \otimes \epsilon$ for some map $\epsilon \colon A \to \bone$. Thus $\mu \rho_{\pi}=\id \otimes \epsilon$, and so $\mu=(\id \otimes \epsilon) \rho_{\pi}$. Thus $(\pi, \epsilon)$ is a S-structure.

(b) Now suppose that $(\pi, \epsilon)$ is a S-structure on $A$. To show that $A$ is Azumaya, we must show that the map
\begin{displaymath}
\phi \colon A \otimes A \to \uHom(A) = A \otimes A^*
\end{displaymath}
is an isomorphism. We identify $A^*$ with $A$ using the self-duality of $A$ provided by $\epsilon \mu$. With this identification, we compute $\phi$, and find it is given by
\begin{displaymath}
\phi(x \otimes y) = (\id \otimes \epsilon \otimes \id \otimes \epsilon)((x \otimes y \otimes 1 \otimes 1) \pi_1 \pi_2 \pi_3).
\end{displaymath}
To compute the above map, we first apply $\id \otimes \id \otimes \id \otimes \epsilon$. We note that $x \otimes y \otimes 1 \otimes 1$ and $\pi_1$ pull out of this map. We have
\begin{displaymath}
(\id \otimes \id \otimes \id \otimes \epsilon)(\pi_2 \pi_3) = (\id \otimes \id \otimes \mu)(\pi_2) = \pi_2.
\end{displaymath}
We thus find
\begin{align*}
\phi(x \otimes y)
&= (\id \otimes \epsilon \otimes \id)((x \otimes y \otimes 1)\pi_1 \pi_2) \\
&= (\id \otimes \id \otimes \epsilon)((x \otimes 1 \otimes y)\pi_2 \pi_1) \\
&= (x \otimes 1) \cdot \big( (\id \otimes \id \otimes \epsilon)((1 \otimes 1 \otimes y)\pi_2) \big) \cdot \pi \\
&= (x \otimes y) \pi.
\end{align*}
In the third step we used the formula $(\id \otimes \epsilon)\rho_{\pi} = \mu$. We thus find $\phi=\rho_{\pi}$, which is indeed an isomorphism, and so $A$ is Azumaya. The map $\rho_{\pi}$ provides an isomorphism of bimodules $A \otimes A \to (A \otimes A)^{\dag}$, which squares to the identity, and so $\tilde{\eta}(A)=1$.
\end{proof}

We now use general properties of Azumaya algebras to easily prove two results on S-algebras. For the first result, we must assume that the functor $\fC \to {}_A\Mod_A$ is an equivalence.

\begin{proposition} \label{prop:Sclass}
If $(\pi, \epsilon)$ is an S-structure on $A$ then $(-\pi, -\epsilon)$ is the only other one.
\end{proposition}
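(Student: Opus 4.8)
The plan is to use the comparison theorem to reduce the statement to a computation of bimodule automorphisms over the Azumaya algebra $B=A\otimes A$. Write $(\pi,\epsilon)$ for the given S-structure and let $(\pi',\epsilon')$ be an arbitrary S-structure on $A$; the goal is to show $(\pi',\epsilon')\in\{(\pi,\epsilon),(-\pi,-\epsilon)\}$. The argument splits into two steps: first that $\epsilon$ is recoverable from $\pi$, and second that $\pi$ is determined up to a sign.

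For the first step, recall that for an S-algebra the maps $\pi\colon\bone\to A\otimes A$ and $\epsilon\mu\colon A\otimes A\to\bone$ form a symmetric self-duality of the object $A$. The evaluation map of a duality is determined by its co-evaluation, and rescaling the co-evaluation by $c\in k^{\times}$ forces the evaluation to be rescaled by $c^{-1}$; since moreover $\epsilon=(\epsilon\mu)\circ(\id_A\otimes\eta)$, it follows that whenever $\pi'=c\,\pi$ we must have $\epsilon'=c^{-1}\epsilon$. Thus the whole S-structure is pinned down by $\pi$ up to this scalar, and it suffices to prove $\pi'=\pm\pi$.

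For the second step, Theorem~\ref{thm:azu-T} tells us that for any S-structure the right-multiplication operator $\rho_\pi$ is an isomorphism of $B$-bimodules $B\to B^{\dag}$, and $\rho_\pi^2=\rho_{\pi^2}=\id_B$ because $\pi^2=1$. Hence $\rho_\pi^{-1}\circ\rho_{\pi'}=\rho_\pi\circ\rho_{\pi'}=\rho_{\pi'\pi}$ is an automorphism of the \emph{regular} $B$-bimodule $B$. Now $B=A\otimes A$ is again Azumaya (tensor products of Azumaya algebras are Azumaya; see \S\ref{ss:azumaya} and \cite{Pareigis4}), so the endomorphisms of its regular bimodule are exactly its center, namely $\bone$; therefore $\rho_{\pi'\pi}$ is a nonzero scalar $c$. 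Since $\gamma\mapsto\rho_\gamma$ is injective on $\Gamma(B)$ (evaluate at the unit of $B$), this gives $\pi'\pi=c\cdot 1$, i.e.\ $\pi'=c\,\pi$, and then $1=(\pi')^2=c^2$ forces $c=\pm1$. Combining with the first step yields $(\pi',\epsilon')\in\{(\pi,\epsilon),(-\pi,-\epsilon)\}$. (In characteristic $2$ these two coincide and there is no genuinely different S-structure, which is still consistent with the statement.)

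The step I expect to be the main obstacle is the input used in the second paragraph: that $B=A\otimes A$ is Azumaya with trivial center, so that its regular bimodule has only scalar automorphisms. Closure of Azumaya algebras under $\otimes$ is standard, and the triviality of the center of an Azumaya algebra follows from the defining isomorphism $B\otimes B^{\op}\cong\uEnd(B)$ together with the Morita-triviality of $\uEnd$ of a rigid object --- this is the one place where the Azumaya hypothesis, and with it the running assumption that $\fC\to{}_A\Mod_A$ is an equivalence, genuinely enters. All the other ingredients --- $\pi^2=1$, the self-duality $(\pi,\epsilon\mu)$, and the comparison theorem --- are already in hand.
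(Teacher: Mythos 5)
Your proof is correct and follows essentially the same route as the paper: both arguments observe that $\rho_\pi\rho_{\pi'}=\rho_{\pi'\pi}$ is a $B$-bimodule automorphism of the regular bimodule $B=A\otimes A$ (hence a scalar, via the equivalence $\fC\cong{}_B\Mod_B$), and then use $\pi^2=(\pi')^2=1$ to pin the scalar to $\pm1$. The only difference is cosmetic: the paper leaves the recovery of $\epsilon'$ from $\pi'$ as an assertion, whereas you supply it via uniqueness of the evaluation in a duality pair, which is a clean way to fill that gap.
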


\begin{proof}
Suppose $(\pi', \epsilon')$ is an arbitrary S-structure. Then $\rho_{\pi} \rho_{\pi'}$ is an automorphism of $A \otimes A$ as an $A \otimes A$ bimodule, and is thus a scalar via the equivalence $\fC \cong {}_A\Mod_A$. Hence $\pi'= \alpha \pi$ for some scalar $\alpha$. Since $(\pi')^2=\pi^2=1$, it follows that $\alpha=\pm 1$. Clearly, $\epsilon'=\alpha \epsilon$ as well.
\end{proof}

\begin{proposition} \label{prop:anti-inv}
Let $A$ be an S-algebra and let $i \colon A \to A$ be an anti-involution preserving $\epsilon$. Then $A^{\otimes 2}$ is S-split.
\end{proposition}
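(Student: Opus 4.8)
The plan is to exhibit $A^{\otimes 2}$ as the internal endomorphism algebra $\uEnd(A)$ of $A$ itself, viewed as a rigid object via the symmetric self-duality $(\pi,\epsilon\mu)$ constructed above, using the Azumaya property of $A$ together with the anti-involution $i$.

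First I would note that $A$, carrying an S-structure, is Azumaya (Theorem~\ref{thm:azu-T}(b)), so the natural map $\phi\colon A\otimes A^{\op}\to\uEnd(A)$, $a\otimes b\mapsto(x\mapsto axb)$, is an algebra isomorphism. Since $i$ is an anti-involution it is an isomorphism of algebras $A\xrightarrow{\,\sim\,}A^{\op}$, so $\id_A\otimes i\colon A^{\otimes 2}\to A\otimes A^{\op}$ is one as well; composing gives an algebra isomorphism
\[
\Psi=\phi\circ(\id_A\otimes i)\colon A^{\otimes 2}\xrightarrow{\,\sim\,}\uEnd(A),\qquad \Psi(a\otimes b)=\bigl(x\mapsto a\,x\,i(b)\bigr).
\]
In particular $A^{\otimes 2}$ is already split as an algebra, with underlying rigid object $V=A$ (consistent with $\udim A=\udeg(A)^2=\udeg(A^{\otimes 2})$, as in Corollary~\ref{cor:dim-deg}). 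The substance of the proof is then to upgrade $\Psi$ to an isomorphism of \emph{S}-algebras: an algebra isomorphism need not respect S-structures, so I must check that $\Psi$ carries the product S-structure $(\pi_{A^{\otimes 2}},\epsilon\otimes\epsilon)$ on $A^{\otimes 2}$ to the canonical one on $\uEnd(A)$.

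I would match the $\epsilon$-components first, since this is the one place the hypothesis $\epsilon\circ i=\epsilon$ is used. Using the identity $\mu\circ\rho_\pi=\id\otimes\epsilon$ (which follows from $(\id\otimes\epsilon)\rho_\pi=\mu$ and $\rho_\pi^2=\id$), and writing the canonical trace on $\uEnd(A)$ as $(\epsilon\mu)\circ\tau\circ\bigl((-)\otimes\id_A\bigr)\circ\pi$, a short diagram chase — using that conjugation by $\pi$ realizes $\tau$, so $(\lambda_a\rho_b\otimes\id)(\pi)=(a\otimes b)\pi$, then $\tau(\pi)=\pi$, then left $A$-linearity of $\mu$ — gives, for $a,b\in\Gamma(A)$,
\[
\epsilon_{\uEnd(A)}(\lambda_a\rho_b)=(\epsilon\mu)\bigl(\tau((a\otimes b)\pi)\bigr)=(\epsilon\mu)\bigl((b\otimes a)\pi\bigr)=\epsilon(a)\,\epsilon(b).
\]
Hence $\epsilon_{\uEnd(A)}\circ\phi=\epsilon\otimes\epsilon$, so $\epsilon_{\uEnd(A)}\circ\Psi=(\epsilon\otimes\epsilon)\circ(\id_A\otimes i)=\epsilon\otimes\epsilon$ by $\epsilon\circ i=\epsilon$. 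For the $\pi$-component I would transport the canonical S-structure of $\uEnd(A)$ back along $\Psi^{-1}$: this is an S-structure on $A^{\otimes 2}$ whose $\epsilon$-part is exactly $\epsilon\otimes\epsilon$ by what was just shown. Since $A^{\otimes 2}$, a tensor product of Azumaya algebras, is again Azumaya, Proposition~\ref{prop:Sclass} forces this transported S-structure to equal $\pm(\pi_{A^{\otimes 2}},\epsilon\otimes\epsilon)$; because the $\epsilon$-components agree on the nose and not merely up to sign (using $\epsilon\ne 0$, as $\epsilon\mu$ is a perfect pairing), the sign is $+1$. Thus $\Psi$ is an isomorphism of S-algebras and $A^{\otimes 2}$ is S-split. (If one prefers to avoid Proposition~\ref{prop:Sclass}, one can instead compute $\Psi^{\otimes 2}(\pi_{A^{\otimes 2}})$ directly, for which it is convenient to first record the auxiliary fact $(i\otimes i)(\pi)=\pi$, itself a consequence of $\epsilon\circ i=\epsilon$.)

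I do not anticipate a deep obstacle: the work is entirely the S-structure bookkeeping, the single essential use of the hypothesis being the trace identity above combined with $\epsilon\circ i=\epsilon$. The step that demands the most care is pinning the \emph{sign} of the transported S-structure, and the safeguard against a sign slip is precisely to match the $\epsilon$-components as morphisms, not merely up to scalar.
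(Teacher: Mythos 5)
Your argument follows the paper's proof exactly: compose the Azumaya isomorphism $A \otimes A^{\op} \to \uEnd(A)$ with $\id_A \otimes i$ and check that the composite respects the S-structures, a verification the paper leaves to the reader and you carry out. The one caution is that your primary route through Proposition~\ref{prop:Sclass} implicitly imports its standing hypothesis that $\fC \to {}_{A^{\otimes 2}}\Mod_{A^{\otimes 2}}$ is an equivalence, which is not assumed in the present proposition; your parenthetical fallback --- verifying $(i \otimes i)(\pi) = \pi$ directly, which holds because $i$ preserves the evaluation $\epsilon\mu$ and the zig-zag identities force the co-evaluation $\pi$ to be determined by $\epsilon\mu$ --- avoids this hypothesis and is precisely what the paper's phrase ``necessarily preserves the S-structure since it preserves $\epsilon$'' is alluding to.
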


\begin{proof}
Since $A$ is an Azumaya algebra, the natural map $A \otimes A^{\op} \to \uEnd(A)$ is an isomorphism, and one easily verifies that it is in fact an S-splitting. The anti-involution $i$ defines an algebra isomorphism $A \to A^{\op}$, which necessarily preserves the S-structure since it preserves $\epsilon$. We thus have an S-isomorphism $A^{\otimes 2} \to \uEnd(A)$, which completes the proof.
\end{proof}

\section{Splitting categories} \label{s:split}

\subsection{Statement of results}

Fix a Karoubian tensor category $\fC$ and a separable S-algebra $A$ in $\fC$ for the duration of \S \ref{s:split}. We recall (Proposition~\ref{prop:separable}) that separability is automatic if $\fC$ is semi-simple or $\udim(A) \ne 0$. Much of what we do goes through without separability; see \S \ref{ss:insep} for details. Our aim is to enlarge $\fC$ in a way to make $A$ be S-split. To precisely discuss this idea, we introduce the following notion.

\begin{definition}
A \defn{splitting category} for $A$ is a tuple $(\fD, \Phi, X, i)$, where $\fD$ is a Karoubian tensor category, $\Phi \colon \fC \to \fD$ is a tensor functor, $X$ is a rigid object of $\fD$, and $i \colon \Phi(A) \to \uEnd(X)$ is an isomorphism of S-algebras.
\end{definition}

We note that in any splitting category, we have $\udim(X)=\udeg(A)$; indeed, the tensor functor $\Phi$ preserves categorical degree, and the categorical degree of $\uEnd(X)$ is $\udim(X)$. There may be many splitting categories for $A$; indeed if $A$ is already split in $\fC$ then any tensor functor $\Phi \colon \fC \to \fD$ is naturally a splitting category. We therefore aim to single out a best one. The following definition gives the relevant notion.

\begin{definition} \label{defn:usplit}
A splitting category $(\fD, \Phi, X, i)$ is \defn{universal} if it is an initial object of the (1-truncation of the) 2-category of splitting categories, in the following sense. Suppose that $(\fT, \Psi, Y, j)$ is an arbitrary splitting category. Then there must be a tensor functor $\Theta \colon \fD \to \fT$ and isomorphisms $a \colon \Psi \to \Theta \circ \Phi$ and $b \colon Y \to \Theta(X)$ such that the diagram
\begin{displaymath}
\xymatrix@C=3em{
\Psi(A) \ar[r]^-j \ar[d]_a  & \uEnd(Y) \ar[d]^b \\
\Theta(\Phi(A)) \ar[r]^-{\Theta(i)} & \uEnd(\Theta(X)) }
\end{displaymath}
commutes. Moreover, $(\Theta, a, b)$ must be unique up to isomorphism.
\end{definition}

A universal splitting category is unique up to equivalence, if it exists. The following is one of the main results of this paper.

\begin{theorem} \label{thm:split}
A universal splitting category $(\fD, \Phi, X, i)$ for $A$ exists. Additionally:
\begin{enumerate}
\item The category $\fD$ carries a natural $\bZ$-grading, with $X \in \fD_1$. The functor $\Phi$ is an equivalence between $\fC$ and $\fD_0$; in particular, $\Phi$ is fully faithful.
\item If $A=\uEnd(V)$ is S-split then $\fD=\fC^{\bZ}$ is the category of $\bZ$-graded objects in $\fC$ and $X=V[1]$; see \S \ref{ss:split-case} for details.
\item If $\fC$ is rigid (resp.\ $\Hom$-finite, abelian, semi-simple, pre-Tannakian), so is $\fD$.
\end{enumerate}
\end{theorem}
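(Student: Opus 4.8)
The plan is to construct $\fD$ directly as an explicit category and then verify the universal property by hand. Given the structure established in \S\ref{ss:triv2}, the natural candidate is to build $\fD$ from $\Mod_A$ (or the relatively projective $A$-modules) by formally trivializing a suitable invertible object. Concretely, since $A$ is separable and $\fC$ is Karoubian, the category $\cM$ of relatively projective left $A$-modules is Karoubian, and the functor $X \mapsto A \otimes X$ from $\fC$ lands in $\cM$; moreover $A$ itself is a distinguished object of $\cM$ whose internal endomorphism algebra (computed in an appropriate monoidal structure on $\cM$) recovers $A$. The $\fS_n \times \fS_n$-action on $A^{\otimes n}$ constructed via $\pi \colon \fS_n \to \Gamma(A^{\otimes n})^\times$ (Proposition~\ref{prop:braid} and the discussion after it) is precisely the data needed to equip $\cM$, or rather a bimodule version of it, with a symmetric monoidal structure in which $A$ becomes $\uEnd$ of a rigid object. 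The $\bZ$-grading then arises because $\fD$ is assembled from ``powers of $A$'': an object in degree $n$ is (a summand of) something built from $A^{\otimes n}$ tensored with an object of $\fC$, and the degree-$0$ part, consisting of bimodules of the form $X \otimes A$, is equivalent to $\fC$ by the Azumaya property (Pareigis, cited in \S\ref{ss:azumaya}).

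Here is the sequence of steps I would carry out. First, fix the construction: define $\fD$ to have objects pairs $(X,n)$ with $X \in \fC$ and $n \in \bZ$, with $\Hom_{\fD}((X,n),(Y,m))$ a suitable colimit/direct sum of $\Hom_{\fC}$-spaces involving $A^{\otimes(r+n)}$ and $A^{\otimes(r+m)}$, mirroring the two constructions of \S\ref{ss:triv1}--\S\ref{ss:triv2} but with $L$ replaced by $A$ and with the braiding twisted by the $\pi(\sigma)$'s. Second, verify that the associativity and symmetry constraints are well-defined: the associators are inherited from $\fC$ (hence ``trivial''), while the symmetry isomorphism $(X,n)\otimes(Y,m) \to (Y,m)\otimes(X,n)$ is built from $\tau_{\fC}$ together with the element $\pi(\sigma)$ for the appropriate block permutation $\sigma \in \fS_{n+m}$; the braid relation $(\pi_i\pi_{i+1})^3 = 1$ is exactly what makes this assignment a group homomorphism and hence the hexagon axioms hold. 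Third, exhibit the rigid object $X = (\bone, 1)$ (or rather the object of $\fD_1$ corresponding to the rank-one free module $A$) together with an isomorphism $\Phi(A) \cong \uEnd(X)$ of S-algebras, using that $\epsilon$ and $\pi$ are by definition the trace and symmetry data of this $\uEnd$. Fourth, prove the universal property: given another splitting category $(\fT, \Psi, Y, j)$, define $\Theta(X,n) = \Psi(X) \otimes Y^{\otimes n}$ on objects (interpreting negative powers via the dual $Y^*$), check this extends to a tensor functor using $j$ to translate the $\pi$-twisted braiding in $\fD$ into the genuine braiding on powers of $Y$ in $\fT$, and verify uniqueness up to isomorphism by noting that any such $\Theta$ is forced on $\fD_0 \cong \fC$ by $a$ and on $X$ by $b$, and $\fD$ is generated by these.

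For part~(b), when $A = \uEnd(V)$ is already S-split, the element $\pi$ is literally the symmetry of $V\otimes V$, so the $\pi$-twisted braiding on powers of $A$ untwists: one checks $\fD \simeq \fC^{\bZ}$ with $X$ corresponding to $V$ placed in degree $1$ (i.e.\ $V[1]$), the sign conventions working out because $\uEnd(V) = \uEnd(V[1])$ as noted in the introduction. For part~(c), the inheritance statements are essentially formal once the construction is in hand: rigidity of $\fD$ follows because every object $(X,n)$ has dual $(X^*, -n)$ (using rigidity of $\fC$ and that $X=(\bone,1)$ is invertible in $\fD$ with inverse $(\bone,-1)$, since $\uEnd(X)$ being Azumaya forces $X$ to be ``$\otimes$-invertible up to $\fC$''); $\Hom$-finiteness holds because each $\Hom_{\fD}$ is a finite direct sum (the colimit stabilizes) of finite-dimensional $\Hom_{\fC}$-spaces; abelian-ness is inherited because $\fD$ is, up to the grading shift, built from a module category over $\fC$ and kernels/cokernels are computed gradewise in $\fC$; semisimplicity follows from Corollary~\ref{cor:azumss} (so $\Mod_A$ is semisimple) plus the fact that trivializing an object preserves semisimplicity; and pre-Tannakian is the conjunction of the preceding together with $\End_{\fD}(\bone) = \End_{\fC}(\bone) = k$, which holds because the degree-$0$ endomorphisms of the unit are just $\End_{\fC}(\bone)$ and there are no nonzero morphisms $\bone \to (\bone, n)$ for $n \neq 0$ by a degree/dimension count (here $\udeg(A) \neq 0$ is used, or more precisely separability).

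The main obstacle I expect is making the symmetry isomorphism on $\fD$ genuinely well-defined and verifying the hexagon axioms --- that is, checking that the $\pi(\sigma)$-twist interacts correctly with the inherited associators and with tensoring of morphisms. The subtlety is that a morphism $(X,n)\to(Y,m)$ is represented by a morphism in $\fC$ only after stabilizing in $r$, and one must check that the braiding formula is independent of the representative and compatible with composition; this is where the $\fS_n\times\fS_n$-action (not merely the diagonal $\fS_n$) genuinely enters, since moving an $A^{\otimes r}$ ``stabilization block'' past a tensor factor uses the right action $\rho_{\pi}^{-1}$ while permuting the visible factors uses the left action $\lambda_{\pi}$. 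Getting the bookkeeping of these two commuting actions exactly right --- and confirming that the resulting symmetry squares to the identity so that $\fD$ is symmetric and not merely braided --- is the technical heart of the argument; everything else is either a formal consequence of the Azumaya/separable hypotheses or a routine transcription of the de-equivariantization constructions of \S\ref{ss:triv1}--\S\ref{ss:triv2}.
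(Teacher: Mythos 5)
Your proposal has the right high-level picture — objects in degree $n$ should be built from $A^{\otimes n}$, the symmetry should be twisted by the $\pi(\sigma)$'s, the braid relation of Proposition~\ref{prop:braid} is what makes this coherent, and universality should come from generation by $\fC$ together with one rigid object in degree $1$ — but the concrete construction you fix in your ``first step'' is not viable, and this undermines the rest.

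You propose defining $\fD$ with objects $(X,n)$, $X \in \fC$, $n \in \bZ$, and $\Hom$-spaces given by a colimit ``mirroring the constructions of \S\ref{ss:triv1}--\S\ref{ss:triv2} but with $L$ replaced by $A$.'' This fails for two reasons. First, $A$ is not invertible, so the transition maps $f \mapsto \id_A \otimes f$ in the colimit $\varinjlim_r \Hom_{\fC}(A^{\otimes(r+n)}\otimes X, A^{\otimes(r+m)}\otimes Y)$ are only injections, not isomorphisms; the colimit does not stabilize (contrary to what you assert when arguing $\Hom$-finiteness), and even if it did it would not give the correct answer. Second, and more fundamentally, the degree-$n$ piece of $\fD$ is \emph{not} equivalent to $\fC$: it is the category of relatively projective $(A^{\op})^{\otimes n}$-modules, and $[A^{\op}]^n$ is a nontrivial Brauer class when $A$ is not split, so $\Mod_{(A^{\op})^{\otimes n}}$ really is a different category from $\fC$. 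The correct $\Hom$-space between $(X,n)$ and $(Y,n)$ — viewing $(X,n)$ as the free module $(A^{\op})^{\otimes n} \otimes X$ — is $\Hom_{\fC}(X, (A^{\op})^{\otimes n}\otimes Y)$ by the free/forget adjunction, not a colimit with growing $A$-powers. The paper's route is essentially forced: it first builds the $\bN^2$-graded category $\tilde{\fD}$ whose $(n,m)$-piece is relatively projective $A^{\otimes n}\otimes(A^{\op})^{\otimes m}$-modules, and only \emph{then} inverts and trivializes a specific object $E$ — namely $A$ regarded as an $A \otimes A^{\op}$-module in degree $(1,1)$ — which \emph{is} invertible in $\tilde{\fD}$ precisely because $A$ is Azumaya. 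You are conflating $\fC$ with $\tilde{\fD}$ and conflating $A$ with $E$.

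Two smaller points. You claim $X = (\bone,1)$ is ``invertible up to $\fC$'' with inverse $(\bone,-1)$; in fact $X \otimes X^* \cong \Phi(A) \neq \bone$, so $X$ is rigid but genuinely not invertible, and rigidity must be proved by exhibiting explicit (co)evaluation maps built from $\pi$ and $\epsilon$ (the paper's Lemma~\ref{lem:Xrigid}). And the statement that ruling out morphisms $\bone \to (\bone,n)$ for $n \neq 0$ needs $\udeg(A)\neq 0$ is off: these vanish because the category is $\bZ$-graded by construction, not by any dimension count. Finally, you correctly flag the hexagon axiom as the technical heart but underestimate it: the paper explicitly says verifying the hexagon directly for $\otimes$ ``is somewhat complicated'' and introduces an auxiliary tensor product $\boxtimes$, defined by summing over isomorphisms of bisets and taking invariants, precisely to make symmetry manifest; this coherence argument needs to be done, not merely asserted to follow from the braid relation.
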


As a corollary of the theorem, we can describe the splitting behavior of Azumaya algebras.

\begin{corollary}
Let $B$ be an Azumaya algebra in $\fC$. Then the following are equivalent:
\begin{enumerate}
\item There exists a fully faithful tensor functor $\Phi \colon \fC \to \fD$, for some tensor category $\cD$, such that $\Phi(B)$ is split.
\item We have $\tilde{\eta}(B)=1$.
\end{enumerate}
\end{corollary}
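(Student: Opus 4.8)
The plan is to deduce the corollary from the two structural results already in hand: the comparison Theorem~\ref{thm:azu-T} between Azumaya algebras and S-algebras, and the existence Theorem~\ref{thm:split} of the universal splitting category.

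\emph{(b)$\Rightarrow$(a).} Assume $\tilde{\eta}(B)=1$. Since $B$ is Azumaya, the functor $\fC\to{}_B\Mod_B$, $X\mapsto X\otimes B$, is an equivalence whenever $\otimes_B$ is well behaved on bimodules — in particular when $B$ is separable, and in general via the analysis of \S\ref{ss:insep} — so Theorem~\ref{thm:azu-T}(a) applies and equips $B$ with an S-structure $(\pi,\epsilon)$. Apply Theorem~\ref{thm:split} to the S-algebra $B$: it produces a universal splitting category $(\fD,\Phi,X,i)$. Here $\fD$ is a tensor category, $\Phi\colon\fC\to\fD$ is a tensor functor which is fully faithful by part~(1) of that theorem, and $i\colon\Phi(B)\to\uEnd(X)$ is an isomorphism of S-algebras — in particular of algebras — with $X$ rigid, so $\Phi(B)$ is split. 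This is exactly~(a).

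\emph{(a)$\Rightarrow$(b).} Suppose $\Phi\colon\fC\to\fD$ is a fully faithful tensor functor and $i\colon\Phi(B)\xrightarrow{\sim}\uEnd(X)$ for some rigid $X\in\fD$. Any split algebra has trivial $\tilde{\eta}$: as recorded in the Example following the definition of $\tilde{\eta}$, for $\uEnd(X)\otimes\uEnd(X)=X\otimes X^*\otimes X\otimes X^*$ the map interchanging the two $X^*$-factors is an isomorphism of bimodules onto the $\dag$-twist which squares to $\id$. Transporting it along $i$ gives such an isomorphism $j$ for $\Phi(B)\otimes\Phi(B)$. Now I would show that a fully faithful tensor functor reflects the condition ``$\tilde{\eta}=1$'': the underlying object of $(\Phi(B)\otimes\Phi(B))^{\dag}$ is $\Phi(B\otimes B)$, and the two bimodule structures and the $\dag$-twist are built from the images under $\Phi$ of $\mu_B$ and of $\tau_{B\otimes B}$; hence full faithfulness writes $j=\Phi(i_0)$ for a unique $i_0\colon B\otimes B\to B\otimes B$ in $\fC$, and faithfulness forces $i_0$ to be a bimodule isomorphism with $i_0^2=\id$, since each of these is an equality of morphisms in $\fC$ whose $\Phi$-image holds in $\fD$. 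Thus $\tilde{\eta}(B)=1$.

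\emph{Main obstacle.} The delicate point lies entirely in (b)$\Rightarrow$(a): both the use of Theorem~\ref{thm:azu-T}(a) — stated under the hypothesis ${}_B\Mod_B\cong\fC$ — and the use of Theorem~\ref{thm:split} — stated for \emph{separable} S-algebras — require that $B$ be separable (or $\fC$ abelian), or else the non-separable machinery of \S\ref{ss:insep}; note that Proposition~\ref{prop:separable} does not rescue the general case because $\udim(B)$ may vanish. Everything else — the appeal to Theorem~\ref{thm:split}(1) for full faithfulness of $\Phi$, and the verification in (a)$\Rightarrow$(b) that ``bimodule map'' and ``squares to $\id$'' descend along the faithful monoidal functor $\Phi$ — is routine.
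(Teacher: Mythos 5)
Your proof is correct and follows essentially the same route as the paper: (b)$\Rightarrow$(a) via Theorem~\ref{thm:azu-T} plus the universal splitting category of Theorem~\ref{thm:split} (with the same caveat about the inseparable case), and (a)$\Rightarrow$(b) by the fact that fully faithful tensor functors reflect the triviality of $\tilde{\eta}$. The only difference is that you unpack the paper's terse appeal to ``$\Phi$ is injective on $\wt{\Pic}(-)[2]$'' into the explicit descent of the bimodule isomorphism $j$ to an $i_0$ in $\fC$ with $i_0^2=\id$, which is a harmless elaboration of the same argument.
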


\begin{proof}
If (b) holds then we can choose an S-structure on $B$ (Theorem~\ref{thm:azu-T}), and take $\fD$ to be the universal splitting category. (If $B$ is not separable, the proof of Theorem~\ref{thm:split} still produces a $\fD$ that works.) Now suppose (a) holds. Since $\Phi(B)$ is S-split, we have
\begin{displaymath}
\Phi(\tilde{\eta}(B)) = \tilde{\eta}(\Phi(B)) = 1.
\end{displaymath}
Since $\Phi$ is fully faithful, it is injective on $\wt{\Pic}(-)[2]$, and so $\tilde{\eta}(B)=1$.
\end{proof}

We can also use the theorem to give a useful intrinsic characterization of the universal splitting category. In what follows, for a rigid object $X$ and an integer $r \le 0$, we let $X^{\otimes r}$ denote $(X^*)^{\otimes (-r)}$.

\begin{corollary} \label{cor:uni-char}
Let $(\fT, \Psi, Y, j)$ be a splitting category. Suppose that:
\begin{enumerate}
\item $\fT$ is $\bZ$-graded, and $Y \in \fT_1$.
\item $\Psi \colon \fC \to \fT_0$ is an equivalence.
\item Every object of $\fT$ is a summand of $\Psi(M) \otimes Y^{\otimes r}$ for some $M \in \fC$ and $r \in \bZ$.
\end{enumerate}
Then $(\fT, \Psi, Y, j)$ is the universal splitting category.
\end{corollary}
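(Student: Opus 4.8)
The plan is to compare $(\fT,\Psi,Y,j)$ with the universal splitting category supplied by Theorem~\ref{thm:split} and show the comparison functor is an equivalence. Write $(\fD,\Phi,X,i)$ for the universal splitting category of $A$, and apply its defining property (Definition~\ref{defn:usplit}) to the splitting category $(\fT,\Psi,Y,j)$: this yields a tensor functor $\Theta\colon\fD\to\fT$ together with isomorphisms $a\colon\Psi\xrightarrow{\sim}\Theta\circ\Phi$ and $b\colon Y\xrightarrow{\sim}\Theta(X)$ making the square of Definition~\ref{defn:usplit} commute. Since an object isomorphic to an initial object is itself initial, it suffices to prove that $\Theta$ is an equivalence of tensor categories: the triple $(\Theta,a,b)$ then exhibits $(\fT,\Psi,Y,j)$ as isomorphic, in the $2$-category of splitting categories, to the initial object $(\fD,\Phi,X,i)$, hence as universal.

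First I would check that $\Theta$ respects gradings. By Theorem~\ref{thm:split}(a) the category $\fD$ is $\bZ$-graded (with vanishing Hom's between distinct degrees), $\Phi$ is an equivalence onto $\fD_0$, and $X\in\fD_1$; moreover, by the construction of $\fD$, every object of $\fD$ is a retract of a finite direct sum of \emph{standard objects} $\Phi(M)\otimes X^{\otimes r}$ (with $M\in\fC$, $r\in\bZ$, and $X^{\otimes r}=(X^*)^{\otimes(-r)}$ for $r<0$), each of which lies in $\fD_r$. Because $\Theta\circ\Phi\cong\Psi$ lands in $\fT_0$ by hypothesis (b), because $\Theta(X)\cong Y\in\fT_1$ by hypothesis (a) via $b$, and because $\Theta$ is monoidal and preserves duals of rigid objects, $\Theta$ carries $\Phi(M)\otimes X^{\otimes r}$ into $\fT_r$, so $\Theta(\fD_r)\subseteq\fT_r$ for all $r$.

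Next, full faithfulness. Since $\fD$ and $\fT$ are Karoubian and every object of $\fD$ is a retract of a sum of standard objects, the usual retract argument (if $P=e(G)$ and $Q=e'(G')$ with $G,G'$ standard, then $\Hom_\fD(P,Q)$ and $\Hom_\fT(\Theta P,\Theta Q)$ are corresponding retracts of $\Hom_\fD(G,G')$ and $\Hom_\fT(\Theta G,\Theta G')$) reduces the claim to showing that $\Theta$ induces an isomorphism $\Hom_\fD(\Phi(M)\otimes X^{\otimes r},\Phi(N)\otimes X^{\otimes s})\to\Hom_\fT(\Psi(M)\otimes Y^{\otimes r},\Psi(N)\otimes Y^{\otimes s})$. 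Both sides vanish unless $r=s$, by faithfulness of the two gradings. For $r=s$, using rigidity of the standard objects together with the isomorphisms $X^*\otimes X\cong\uEnd(X)\cong\Phi(A)$ (via $\tau$ and $i$) and $Y^*\otimes Y\cong\uEnd(Y)\cong\Psi(A)$ (via $\tau$ and $j$), one identifies \emph{both} Hom-spaces canonically with $\Hom_\fC(M,N\otimes A^{\otimes r})$; the content is then that $\Theta$ intertwines these two identifications, which follows from $\Theta$ being monoidal, from its compatibility with $a$ and $b$, and from the commuting square relating $i$ and $j$. Hence $\Theta$ is fully faithful. Essential surjectivity is then immediate: hypothesis (c) says every object of $\fT$ is a retract of $\Psi(M)\otimes Y^{\otimes r}\cong\Theta(\Phi(M)\otimes X^{\otimes r})$, and a fully faithful functor out of a Karoubian category has essential image closed under retracts (an idempotent on $\Theta(W)$ is $\Theta$ of a unique idempotent on $W$, which splits in $\fD$). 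Therefore $\Theta$ is an equivalence.

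The main obstacle is the bookkeeping in the full faithfulness step: verifying that the two descriptions of the Hom-spaces as $\Hom_\fC(M,N\otimes A^{\otimes r})$ are genuinely intertwined by $\Theta$, which amounts to tracing the evaluation and coevaluation maps of $X$ and $Y$ and the S-structures of $\Phi(A)$ and $\Psi(A)$ through $\Theta$ using the data $(a,b)$. A secondary point to pin down is the assertion that every object of $\fD$ is a retract of a standard object; this should be recorded from the proof of Theorem~\ref{thm:split} — it holds, for instance, because each graded piece $\fD_r$ is a category of relatively projective modules, every one of which is a retract of a relatively free module of the form $\Phi(M)\otimes X^{\otimes r}$.
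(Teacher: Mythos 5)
Your proof is correct and follows essentially the same route as the paper's: take the comparison functor $\Theta\colon\fD\to\fT$ afforded by universality of $(\fD,\Phi,X,i)$, verify it respects gradings, prove full faithfulness by identifying Hom spaces between standard objects $\Phi(M)\otimes X^{\otimes r}$ (resp.\ $\Psi(M)\otimes Y^{\otimes r}$) with Hom spaces in $\fC$ on both sides and observing that both vanish when $r\ne s$ by the grading, then get essential surjectivity from hypothesis~(c). The only differences are cosmetic: you spell out the retract argument and explicitly flag the intertwining check that the paper compresses into the assertion that $\Theta$ is compatible with the previous identifications.
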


\begin{proof}
Let $(\fD, \Phi, X, i)$ be the universal splitting category, and let $\Theta \colon \fD \to \fT$ be the tensor functor afforded by universality. It is clear that $\Theta$ is compatible with the $\bZ$-gradings. Let $M,N \in \fC$, and let $r,s \in \bZ$. We have identifications
\begin{align*}
\Hom_{\fD}(\Phi(M) \otimes X^{\otimes r}, \Phi(N) \otimes X^{\otimes r})
&= \Hom_{\fD}(\Phi(M) \otimes \Phi(A)^{\otimes r}, \Phi(N)) \\
&= \Hom_{\fC}(M \otimes A^{\otimes r}, N),
\end{align*}
where in the first step we used the rigidity of $X$, and in the second that $\Phi$ is fully faithful. A similar computation holds in $\fT$. We thus see that the natural map
\begin{displaymath}
\Theta \colon \Hom_{\fD}(\Phi(M) \otimes X^{\otimes r}, \Phi(N) \otimes X^{\otimes s}) \to
\Hom_{\fT}(\Psi(M) \otimes Y^{\otimes r}, \Psi(N) \otimes Y^{\otimes s}).
\end{displaymath}
is an isomorphism. Indeed, if $r=s$ then $\Theta$ is compatible with the previous identifications, and if $r \ne s$ then both $\Hom$ spaces vanish due to the grading. Since all objects are summands of ones of the above kind, we see that $\Theta$ is fully faithful. Assumption~(c) implies that $\Theta$ is essentially surjective, and so the result follows.
\end{proof}

\subsection{Preliminaries} \label{ss:prelim}

We now begin working on the proof of Theorem~\ref{thm:split}, which will take the remainder of \S \ref{s:split}. For $n,m \in \bN$, put
\begin{displaymath}
A_{n,m} = A^{\otimes n} \otimes (A^{\op})^{\otimes m}.
\end{displaymath}
For some of our constructions, it will be clearer to work with finite sets than natural numbers. A \defn{biset} is a finite set $x$ with a given decomposition $x=x_+ \sqcup x_-$. Given a biset $x$, we put
\begin{displaymath}
A_x = A^{\otimes x_+} \otimes (A^{\op})^{\otimes x_-}.
\end{displaymath}
The \defn{standard biset} $[n,m]$ is $[n] \amalg [m]$, where $n,m \in \bN^2$. Of course, if $x=[n,m]$ then $A_x=A_{n,m}$. An \defn{isomorphism} of bisets $\sigma \colon x \to y$ is a bijection respecting the decompositions. Such a map induces an algebra isomorphism $\sigma_* \colon A_x \to A_y$. In particular, if $M$ is an $A_y$-module then we can define an $A_x$-module structure on $M$ via $\sigma_*$; we denote this by $\sigma^*(M)$.

Write $\Aut(x)$ for the automorphism group of the biset $x$. We have a natural group homomorphism
\begin{displaymath}
\pi \colon \Aut(x) \to \Gamma(A_x)^{\times},
\end{displaymath}
just as in the discussion following Proposition~\ref{prop:braid}. We note that for $\sigma, \sigma' \in \Aut(x)$, we have $\sigma_*(\pi(\sigma'))=\pi(\sigma \sigma' \sigma^{-1})$.

\subsection{The category $\tilde{\fD}$} \label{ss:tildeD}

As a first step towards constructing $\fD$, we construct an auxiliary tensor category $\tilde{\fD}$. The underlying category is simple enough:
\begin{displaymath}
\tilde{\fD} = \bigoplus_{n,m \in \bN^2} \tilde{\fD}_{n,m}, \qquad \tilde{\fD}_{n,m} = \Mod^{\rm relp}_{A_{n,m}}.
\end{displaymath}
Here $\Mod^{\rm relp}$ is the category of relatively projective modules. Thus an object $M$ of $\tilde{\fD}$ is a collection $(M_{n,m})_{n,m}$ where $M_{n,m}$ is a relatively projective $A_{n,m}$-module, and $M_{n,m}=0$ for all but finitely many pairs $(n,m)$. We will find it convenient to sometimes index $M$ by standard bisets; that is, for $x=[n,m]$, we write $M_x$ for $M_{n,m}$.

The tensor product $\otimes$ on $\tilde{\fD}$ is also easy to describe:
\begin{displaymath}
(M \otimes N)_{n,m} = \bigoplus_{0 \le i \le n, 0 \le j \le m} M_{i,j} \otimes N_{n-i,m-j}.
\end{displaymath}
Here $M_{i,j} \otimes N_{n-i,m-j}$ is naturally a module for $A_{i,j} \otimes A_{n-i,m-j}$, which we identify with $A_{n,m}$. This clearly defines a monoidal structure on $\tilde{\fD}$. However, the symmetry of $\otimes$ is not obvious.

We look at a simple case to illustrate the subtlety. Let $M$ and $N$ be $A$-modules, regarded as objects of $\tilde{\fD}$ in degree $(1,0)$. We must construct an isomorphism
\begin{displaymath}
\tilde{\tau} \colon M \otimes N \to N \otimes M
\end{displaymath}
of objects of $\tilde{\fD}$, i.e., of $A^{\otimes 2}$-modules. We have an isomorphism $\tau=\tau_{M,N}$ coming from the symmetry of the tensor product in $\fC$. However, $\tau$ is not $A^{\otimes 2}$-linear; it satisfies the equation
\begin{displaymath}
\tau((a \otimes b)(m \otimes n))=(b \otimes a) \tau(m \otimes n)
\end{displaymath}
for $a,b \in A$ and $m \in M$ and $n \in N$. We can fix this issue by multiplying by $\pi$; that is, we define $\tilde{\tau} =\lambda_{\pi} \circ \tau$. This gives the required isomorphism.

To describe the symmetry in general, let $\sigma_{n,m}$ be the element of the symmetry group $\fS_{n+m}$ inducing the bijection $[n] \amalg [m] \to [m] \amalg [n]$. If $M \in \tilde{\fD}_{m,m'}$ and $N \in \tilde{\fD}_{n,n'}$ then the symmetry isomorphism is
\begin{displaymath}
\tilde{\tau}_{M,N} = \rho_{\pi(\sigma_{m',n'})} \lambda_{\pi(\sigma_{m,n})} \tau_{M,N},
\end{displaymath}
where here we write $\rho$ for the $(A^{\op})^{\otimes (n'+m')}$-module structure, and $\pi(-)$ is defined after Proposition~\ref{prop:braid}. Verifying the hexagon axioms from this definition is somewhat complicated. We therefore take a different approach: we define a new tensor product $\boxtimes$ on $\tilde{\fD}$ that ends up being equivalent to $\otimes$, but for which the symmetry will be more evident.

Before getting to $\boxtimes$, we make one more observation. Suppose $M$ is an object of $\fC$. Then there is an associated object $\tilde{M}$ of $\tilde{\fD}$ defined by $\tilde{M}_{0,0}=M$ and $\tilde{M}_{n,m}=0$ for $(n,m) \ne (0,0)$. This construction defines a symmetric monoidal functor $\tilde{\Phi} \colon \fC \to \tilde{\fD}$.

\subsection{A second tensor product}

Suppose $M$ and $N$ are objects of $\tilde{\fD}$. For a biset $x$, consider the following direct sum
\begin{displaymath}
Q = \bigoplus_{\gamma \colon x \to y \amalg z} \gamma^*(M_y \otimes N_z).
\end{displaymath}
Here $y$ and $z$ vary over all standard bisets, $\gamma$ varies over all isomorphisms of bisets, and the tensor product is taken in $\fC$. The tensor product $M_y \otimes N_z$ is naturally a module for $A_y \otimes A_z=A(y \amalg z)$, and so $\gamma^*(M_y \otimes N_z)$ is an $A_x$-module. Thus the entire direct sum above is an $A_x$-module. Concretely, if $a \in A_x$ and $q \in Q$ then $(aq)_{\gamma}=\gamma_*(a) q_{\gamma}$; here $q_{\gamma}$ is the $\gamma$ component of $q$. Although we are using elemental notation here, there are no actual elements; this is simply a convenient way of specifying the action map.

For $\sigma \in \Aut(x)$ we define a map $\sigma \colon Q \to Q$ by
\begin{displaymath}
(\sigma q)_{\gamma} = \gamma_*(\pi(\sigma)^{-1}) \cdot q_{\gamma \sigma}.
\end{displaymath}
Here $\pi(\sigma)^{-1}$ belongs to $\Gamma(A_x)$, and applying $\gamma_*$ to it moves it to $A(y \amalg z)$. Suppose $\sigma'$ is a second automorphism of $x$. Then
\begin{align*}
(\sigma (\sigma' q))_{\gamma}
&= \gamma_*(\pi(\sigma)^{-1}) (\sigma' q)_{\gamma \sigma}
= \gamma_*(\pi(\sigma)^{-1}) (\gamma_* \sigma_*)(\pi(\sigma')^{-1}) q_{\gamma \sigma \sigma'} \\
&= \gamma_*(\pi(\sigma)^{-1} \pi(\sigma \sigma' \sigma^{-1})^{-1}) q_{\gamma \sigma \sigma'}
= \gamma_*(\pi(\sigma \sigma')^{-1}) q_{\gamma \sigma \sigma'} = ((\sigma \sigma') q)_{\gamma}
\end{align*}
We thus see that we have an action of the group $\Aut(x)$. For $a \in A_x$, we have
\begin{align*}
(\sigma (aq))_{\gamma}
&= \gamma_*(\pi(\sigma)^{-1}) (aq)_{\gamma \sigma}
= \gamma_*(\pi(\sigma)^{-1}) (\gamma \sigma)_*(a) q_{\gamma \sigma}
= \gamma_*(\pi(\sigma)^{-1} \sigma_*(a)) q_{\gamma \sigma} \\
&= \gamma_*(a \pi(\sigma)^{-1}) q_{\gamma \sigma}
= \gamma_*(a) (\sigma q)_{\gamma}
= (a (\sigma q))_{\gamma}
\end{align*}
Here we used the formula $\sigma_*(a)=\pi(\sigma) a \pi(\sigma^{-1})$. We thus see that $\Aut(\sigma)$ acts on $Q$ by $A_x$-module automorphisms.

We now define an object $M \boxtimes N$ of $\tilde{\fD}$ by
\begin{displaymath}
(M \boxtimes N)_x = \big( \bigoplus_{\gamma \colon x \to y \amalg z} \gamma^*(M_y \otimes N_z) \big)^{\Aut(x)},
\end{displaymath}
where here $x$ is a standard biset, the superscript denotes invariants, and other notation is as above. Since $\Aut(x)$ freely permutes the summands, the invariant space exists: it is isomorphic to the direct sum of orbit representatives of summands. Since $\Aut(x)$ acts by $A_x$-module homomorphisms, $(M \boxtimes N)_x$ is an $A_x$-module. It is clear that $\boxtimes$ is functorial.

As we said above, $(M \boxtimes N)_x$ is isomorphic to the direct sum of the summands as $\gamma$ varies over orbit representatives. There is a convenient set of representatives to use, namely, those for which $\gamma$ is order-preserving on the $+$ and $-$ pieces of each biset. Here we use the standard order on $[n]$, and order $[n] \amalg [m]$ by putting $[n]$ before $[m]$. In this way, we obtain an isomorphism
\begin{displaymath}
M \boxtimes N \to M \otimes N.
\end{displaymath}
This map simply projects onto the components where $\gamma$ is order-preserving. This isomorphism is natural in the technical sense, i.e., it is an isomorphism of functors. However, it is unnatural in a colloquial sense, as it relies on a preferred order on the set $[n] \amalg [m]$, and this is why the symmetry is harder to see on $\otimes$.

We have a natural isomorphism
\begin{displaymath}
\tilde{\tau} \colon M \boxtimes N \to N \boxtimes M
\end{displaymath}
by simply permuting the summands around. Precisely, write $(M \boxtimes N)_{x,\gamma}$ for the $\gamma$ summand of the tensor product. For an isomorphism $\gamma \colon x \to y \amalg z$, let $\gamma' \colon x \to z \amalg y$ be the induced isomorphism. Then the symmetry $\tau$ in $\fC$ gives an isomorphism
\begin{displaymath}
\tau \colon (M \boxtimes N)_{x, \gamma} \to (N \boxtimes M)_{x, \gamma'}.
\end{displaymath}
The isomorphism $\tilde{\tau}$ is induced from these isomorphisms. One easily sees that it is $A_x$-linear.

We have just defined the binary product $\boxtimes$. More generally, for a finite set $I$, we define a tensor product
\begin{displaymath}
\boxtimes \colon \tilde{\fD}^I \to \tilde{\fD}
\end{displaymath}
by
\begin{displaymath}
(\boxtimes_{i \in I} M_i)_x = \big( \bigoplus_{\gamma \colon x \to \amalg_{i \in I} y_i} \gamma^*(\bigotimes_{i \in I} M_{i,y_i}) \big)^{\Aut(x)}.
\end{displaymath}
This construction is naturally equivariant for the symmetric group $\Aut(I)$. Moreover, if $f \colon I \to J$ is a surjection of finite sets and $\{M_i\}_{i \in I}$ is a family of objects in $\tilde{\fD}$, then there is a natural isomorphism
\begin{displaymath}
\boxtimes_{i \in I} M_i \cong \boxtimes_{j \in J} \boxtimes_{i \in f^{-1}(j)} M_i.
\end{displaymath}
The above isomorphisms satisfy the expected compatibilities. This shows that our binary tensor product $\boxtimes$ does indeed define a symmetric monoidal structure. Therefore $\otimes$ does as well.

\subsection{The category $\fD$}

Let $E$ be the object of $\tilde{\fD}$ defined by $E_{1,1}=A$ (with its natural $A_{1,1}$ structure) and $E_{n,m}=0$ for $(n,m) \ne (1,1)$. We require the following observation:

\begin{lemma}
The symmetry $\tilde{\tau}$ on $E^{\otimes 2}$ is trivial.
\end{lemma}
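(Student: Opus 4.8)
The plan is to compute the symmetry $\tilde{\tau}$ on $E^{\otimes 2}$ explicitly, using the general formula for the symmetry on $\tilde{\fD}$ in terms of $\rho_{\pi(-)}$, $\lambda_{\pi(-)}$ and $\tau$, and then to recognize that the relevant permutation elements $\pi(\sigma)$ cancel exactly the ``wrong-way'' action of $\tau$ on the $A_{n,m}$-module structures. First I would identify the nonzero piece of $E^{\otimes 2}$: since $E$ is supported in bidegree $(1,1)$, the object $E^{\otimes 2}$ is supported in bidegree $(2,2)$, and $(E^{\otimes 2})_{2,2}$ is, as an $A_{2,2}$-module, a sum of copies of $A \otimes A$ (more precisely $A^{\otimes 2} \otimes (A^{\op})^{\otimes 2}$ after reindexing the factors). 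Concretely, via the $\boxtimes$ description, $(E \boxtimes E)_{[2,2]}$ is the $\Aut([2,2])$-invariants of $\bigoplus_{\gamma} \gamma^*(E_y \otimes E_z)$, and the only way to get a nonzero summand is $y = z = [1,1]$, so the summands are indexed by the isomorphisms $\gamma \colon [2,2] \to [1,1] \amalg [1,1]$.

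Then I would track what $\tilde{\tau}$ does. On $\boxtimes$ the symmetry is \emph{defined} by permuting these summands: $\gamma \mapsto \gamma'$, where $\gamma'$ swaps the two copies of $[1,1]$, together with the symmetry $\tau$ of $\fC$ on each summand. So the claim ``$\tilde{\tau}$ is trivial on $E^{\otimes 2}$'' is really the claim that, after passing to $\Aut([2,2])$-invariants, this summand-swap becomes the identity. The key point is that swapping the two copies of $[1,1]$ is induced by an element of $\Aut([2,2])$ (the permutation that exchanges the two $+$-points and exchanges the two $-$-points), and the $\Aut([2,2])$-action on $Q$ used to form the invariants is precisely designed so that a group element $\sigma$ acts on the $\gamma$-summand by $\gamma_*(\pi(\sigma)^{-1})$ composed with the reindexing $\gamma \mapsto \gamma\sigma$. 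Hence on the invariant subspace, the reindexing that defines $\tilde{\tau}$ is absorbed into the group action, and I would check that the compensating factor $\pi(\sigma)$ for this particular $\sigma$ acts on $A^{\otimes 2}$ by the identity: indeed $\pi$ of a product of two disjoint transpositions is $\pi_1 \pi_3$ (in the notation $\pi_i$ for the copy of $\pi$ in slots $i,i+1$), and the relation $\tau = \lambda_\pi \rho_\pi$ together with $\pi^2 = 1$ shows that the resulting conjugation-type operator cancels the factor-swap $\tau$. Alternatively, and perhaps more cleanly, I would argue directly on the $\otimes$-model: unwinding the formula $\tilde{\tau}_{E,E} = \rho_{\pi(\sigma_{1,1})}\lambda_{\pi(\sigma_{1,1})}\tau_{E,E}$ with $\sigma_{1,1}$ the transposition, and using that $E_{1,1} = A$ with $\pi \in \Gamma(A \otimes A)$ its own S-element, one computes that $\tilde{\tau}$ acts on $A \otimes A$ as $x \otimes y \mapsto \pi \cdot \tau(x \otimes y) \cdot \pi$ (appropriately interpreted on the left and $\op$ factors), which equals $\lambda_\pi \rho_\pi \tau = \tau^2 = \id$ by the defining S-identity.

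I expect the main obstacle to be purely bookkeeping: getting the indexing of the four tensor factors straight (two ``$A$'' factors from $x_+$ and two ``$A^{\op}$'' factors from $x_-$), and making sure the $\pi(\sigma)$ appearing in the definition of $\tilde{\tau}$ on $E \otimes E$ is exactly the one whose left-times-right action is conjugation implementing the swap $\tau$, so that $\tilde{\tau} = \lambda_\pi \rho_\pi \tau = \tau \circ \tau = \id$. Once the identity $\tau = \lambda_\pi\rho_\pi$ is invoked in the right slots, the triviality is immediate; there is no deep content, only the need to be careful that the ``$A^{\op}$'' factors contribute $\rho$-type (rather than $\lambda$-type) multiplications and that $\tau(\pi) = \pi$ and $\pi^2 = 1$ are used to kill the leftover signs/factors. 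I would therefore present the computation in the $\boxtimes$-model, where the summand-permutation description makes it manifest that the swap is realized by an element of $\Aut([2,2])$ and hence is trivial on invariants, and relegate the compensating-factor verification to a one-line application of Definition~\ref{defn:Salg}.
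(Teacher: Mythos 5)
Your proposal is correct, and the second ("alternatively, and perhaps more cleanly") route you give is precisely the paper's proof: specialize the general formula $\tilde{\tau}_{M,N} = \rho_{\pi(\sigma_{m',n'})}\lambda_{\pi(\sigma_{m,n})}\tau_{M,N}$ to $M=N=E \in \tilde{\fD}_{1,1}$ to get $\tilde{\tau} = \rho_\pi\lambda_\pi\tau$ on $A\otimes A$, and then invoke the defining S-identity $\tau = \lambda_\pi\rho_\pi$ (with $\lambda,\rho$ commuting) to conclude $\tilde{\tau} = \tau^2 = \id$. The preliminary discussion via the $\boxtimes$-model and the $\Aut([2,2])$-invariants is a consistent but longer way of seeing the same cancellation, and the paper does not bother with it; you could safely drop it.
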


\begin{proof}
If $M$ and $N$ are objects of $\tilde{\fD}_{1,1}$ then the symmetry
\begin{displaymath}
\tilde{\tau} \colon M \otimes N \to N \otimes M
\end{displaymath}
is $\rho_{\pi} \lambda_{\pi} \tau$, where $\tau$ is the symmetry from $\fC$, and $\rho_{\pi}$ denotes the $A^{\op}$-module structure. We specialize this to the case $M=N=E$. Since $\rho_{\pi} \lambda_{\pi} \tau$ is trivial on $A$, by the definition of S-structure, the result follows.
\end{proof}

We now define $\fD=\tilde{\fD}\{E\}$ to be the trivialization of $E$, as defined in \S \ref{ss:triv2}. Recall that there is a canonical functor $\tilde{\fD} \to \fD$, whose defining property is that $E$ is sent to the monoidal unit $\bone$. We have a natural symmetric monoidal functor $\Phi \colon \fC \to \fD$ by composing $\tilde{\Phi}$ with the canonical functor $\tilde{\fD} \to \fD$.

The category $\tilde{\fD}$ is bi-graded; we coarsen this to a $\bZ$-grading by placing $\tilde{\fD}_{n,m}$ in degree $m-n$. Note the sign here: using $n-m$ would be a more obvious choice, but our sign convention is necessary to ensure that the splitting object $X$ has degree $+1$. The object $E$ has degree~0, and so the $\bZ$-grading on $\tilde{\fD}$ induces one on $\fD$.

\begin{lemma} \label{lem:D-equiv}
The natural functor $\tilde{\fD}_{n,m} \to \fD_{m-n}$ is an equivalence.
\end{lemma}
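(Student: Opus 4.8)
The plan is to exhibit the functor $\tilde{\fD}_{n,m} \to \fD_{m-n}$ as fully faithful and essentially surjective, reducing everything to understanding $\Hom$-spaces in the trivialization $\fD = \tilde{\fD}\{E\}$. Recall from \S\ref{ss:triv2} that $\fD$ is built in two stages, $\tilde{\fD}[1/E]$ followed by de-equivariantization by the image of $E$; in both stages morphisms between objects of $\tilde{\fD}$ are described as direct limits (resp.\ direct sums) of morphisms in $\tilde{\fD}$ twisted by powers of $E$. Concretely, for $M \in \tilde{\fD}_{n,m}$ and $N \in \tilde{\fD}_{n',m'}$ the relevant $\Hom$-space in $\fD$ should work out to
\begin{displaymath}
\Hom_{\fD}([M],[N]) = \bigoplus_{r \in \bZ} \varinjlim_{s} \Hom_{\tilde{\fD}}(E^{\otimes (s+r_+)} \otimes M, E^{\otimes (s+r_-)} \otimes N),
\end{displaymath}
where $r_\pm$ are chosen so that the bidegrees match; since $E$ has bidegree $(1,1)$, tensoring by $E$ shifts $(n,m)$ by $(1,1)$ and preserves the coarsened $\bZ$-grading.

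First I would show essential surjectivity. An object of $\fD_{m-n}$ is (the image of) some object of $\tilde{\fD}$ supported in bidegrees $(n',m')$ with $m'-n' = m-n$; tensoring with a suitable power of $E$ moves such a bidegree to the ``diagonal-minimal'' representative, and since $E$ becomes invertible in $\fD$ this does not change the isomorphism class. A small point to check is that I can choose the representative with, say, $n' = n$ and $m' = m$ simultaneously for all the bigraded pieces that appear, but this is immediate once I observe that $E^{\otimes k}$ acts as a bidegree shift by $(k,k)$. So every object of $\fD_{m-n}$ is isomorphic to one coming from $\tilde{\fD}_{n,m}$.

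Next, full faithfulness. The key computation is that the only summand in the direct sum over $r$ that can contribute a nonzero morphism between objects concentrated in bidegrees $(n,m)$ and $(n',m')$ with $m-n = m'-n'$ is the one forcing the bidegrees to agree after the $E$-shifts; the grading kills all others. So I reduce to showing that the natural map
\begin{displaymath}
\Hom_{\tilde{\fD}_{n,m}}(M,N) \to \varinjlim_s \Hom_{\tilde{\fD}}(E^{\otimes s} \otimes M, E^{\otimes s} \otimes N)
\end{displaymath}
is a bijection. Here the essential input is that $- \otimes E \colon \tilde{\fD}_{n,m} \to \tilde{\fD}_{n+1,m+1}$ is fully faithful: since $E_{1,1} = A$ and $A \otimes A^{\op} \cong \uEnd(A)$ is Azumaya (Theorem~\ref{thm:azu-T}(b) applies because $A$ is an S-algebra, hence Morita-trivial over $\fC$), tensoring a relatively projective $A_{n,m}$-module by $A$ over the middle, together with the $\boxtimes$-description of the tensor product, is an equivalence onto $\tilde{\fD}_{n+1,m+1}$. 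I would make this precise using the $\boxtimes$ formula from \S4.4 and the fact that $A_{n+1,m+1} = A_{n,m} \otimes A_{1,1}$ together with the self-duality of $A$ as an S-algebra. Once $-\otimes E$ is fully faithful, all the transition maps in the direct limit are isomorphisms, so the limit equals $\Hom_{\tilde{\fD}_{n,m}}(M,N)$ and we are done.

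The main obstacle I expect is the bookkeeping in identifying the $\Hom$-space in $\fD$ with the displayed graded direct-limit expression: one has to unwind both the $\fC[1/L]$ construction and the de-equivariantization $\fC\{L\}$ of \S\ref{ss:triv1}--\S\ref{ss:triv2}, keep track of which power of $E$ lands in which tensor slot, and verify that the grading on $\fD$ really does split the direct sum over $r \in \bZ$ cleanly. The representation-theoretic content --- that $-\otimes E$ is an equivalence of relatively projective module categories --- is essentially a restatement of the Azumaya/Morita property already proved, so I anticipate it going through without incident; it is the compatibility of all the symmetry twists ($\rho_\pi$, $\lambda_\pi$, $\pi(\sigma)$) with the $E$-shift that will need the most care.
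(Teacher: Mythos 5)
Your proposal follows essentially the same route as the paper's proof: both unwind the two-stage construction $\tilde{\fD} \to \tilde{\fD}[1/E] \to \fD$, use the $\bN^2$-grading on $\tilde{\fD}$ to show only the bidegree-matching summand of the $\Hom$-formula survives, and identify the colimit transition maps with $-\otimes E$, which is an equivalence $\tilde{\fD}_{n,m} \to \tilde{\fD}_{n+1,m+1}$ by Morita theory since $A$ is Azumaya. (One phrasing quibble: it is $A$, not $A \otimes A^{\op}$, whose Azumaya property is the input; the isomorphism $A \otimes A^{\op} \cong \uEnd(A)$ is precisely what ``Azumaya'' means, and it yields $\Mod_{A_{n+1,m+1}} \simeq \Mod_{A_{n,m}}$.)
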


\begin{proof}
Write $\Pi \colon \tilde{\fD} \to \fD$ for the natural functor. By definition, this factors as $\Pi_2 \circ \Pi_1$, where
\begin{displaymath}
\Pi_1 \colon \tilde{\fD} \to \tilde{\fD}[1/E], \qquad
\Pi_2 \colon \tilde{\fD}[1/E] \to \tilde{\fD}\{E\} = \fD
\end{displaymath}
are the natural functors. Let $M$ and $N$ be objects of $\tilde{\fD}_{n,m}$ and $\tilde{\fD}_{a,b}$. As $\Pi_1(M)=(M,0)$, and similarly for $N$, we have
\begin{displaymath}
\Hom_{\tilde{\fD}[1/E]}(\Pi_1(M), \Pi_1(N)) = \varinjlim \Hom_{\tilde{\fD}}(E^{\otimes r} \otimes M, E^{\otimes r} \otimes N).
\end{displaymath}
If $(n,m) \ne (a,b)$ then $E^{\otimes r} \otimes M$ and $E^{\otimes r} \otimes N$ belong to different homogeneous pieces of $\tilde{\fD}$, and so all $\Hom$ spaces on the right vanish; thus the left side vanishes as well. Now suppose $(n,m)=(a,b)$. The functor
\begin{displaymath}
- \otimes E \colon \tilde{\fD}_{n,m} \to \tilde{\fD}_{n+1,m+1}
\end{displaymath}
is an equivalence, since $A$ is an Azumaya algebra, and in particular fully faithful. Thus all transition maps in the direct limit are isomorphisms, and so we see that the natural map
\begin{displaymath}
\Hom_{\tilde{\fD}}(M, N) \to \Hom_{\tilde{\fD}[1/E]}(\Pi_1(M), \Pi_1(N))
\end{displaymath}
is an isomorphism.

Maintain the assumption $(n,m)=(a,b)$. We have
\begin{displaymath}
\Hom_{\fD}(\Pi(M), \Pi(N)) = \bigoplus_{\ell \in \bZ} \Hom_{\tilde{\fD}[1/E]}(\Pi_1(M), \Pi_1(E)^{\otimes \ell} \otimes \Pi_1(N)).
\end{displaymath}
Since $\Pi_1$ is a tensor functor, the second argument is $\Pi_1(E^{\otimes \ell} \otimes N)$. As $E^{\otimes \ell} \otimes N$ has degree $(\ell+n, \ell+m)$, the above $\Hom$ space vanishes unless $\ell=0$. Thus, combined with the previous paragraph, we see that the natural map
\begin{displaymath}
\Hom_{\tilde{\fD}}(M, N) \to \Hom_{\fD}(\Pi(M), \Pi(N))
\end{displaymath}
is an isomorphism. Thus $\Pi \colon \tilde{\fD}_{n,m} \to \fD_{m-n}$ is fully faithful.

Finally, we must show essential surjectivity. Every object of $\tilde{\fD}[1/E]$ is isomorphic to $\Pi_1(M) \otimes \Pi_1(E)^{\otimes \ell}$ for some object $M$ of $\tilde{\fD}$ and some $\ell \in \bZ$. Also, $\Pi_2$ is bijective on objects. Thus every object of $\fD$ has the form $\Pi(M) \otimes \Pi(E)^{\otimes \ell}$. Since $\Pi(E) \cong \bone$, it follows that every object of $\fD$ is isomorphic to one of the form $\Pi(M)$ with $M$ in $\tilde{\fD}$. Clearly, any object of $\fD_a$ is isomorphic to some $\Pi(M)$ with $M \in \tilde{\fD}_{n,m}$ and $m-n=a$. It thus suffices to show that if $n-m=a-b$ then the essential images of $\tilde{\fD}_{n,m}$ and $\tilde{\fD}_{a,b}$ in $\fD$ are the same. We immediately reduce to the case $(a,b)=(n+1,m+1)$. The result now follows since these two categories are equivalent via $- \otimes E$.
\end{proof}

For an integer $n$, put $A_n=A_{0,n}$ if $n \ge 0$ and $A_n=A_{-n,0}$ if $n \le 0$. Again, we emphasize that $A=A_{-1}$ and $A^{\op}=A_1$, which is a bit counterintuitive. From the above lemma, we see that $\fD_n$ is equivalent to the category of relatively projective $A_n$-modules.

We now briefly describe how the tensor product in $\fD$ interacts with the above identifications. For simplicity, we just look at one representative case. Suppose $M$ is an $A_2$-module, regarded in either $\tilde{\fD}_{0,2}$ or $\fD_2$, and $N$ is an $A_{-1}$-module, regarded in either $\fD_{1,0}$ or $\fD_{-1}$. The tensor product $M \otimes N$ in $\tilde{\fD}$ is just the tensor product in $\fC$, equipped with its natural $A_{1,2}$-structure. This defines an object in $\fD_1$, which we have identified with the category of $A_1$-modules. To go from $M \otimes N$ to an $A_1$-module, we apply the Morita equivalence $\Mod_{A_{1,2}}=\Mod_{A_1}$.

\subsection{The splitting structure}

We have so far constructed a tensor functor $\Phi \colon \fC \to \fD$. We now show that $\fD$ admits the structure of a splitting category for $A$. For an integer $n$, let $X_n=A_n$, regarded as an object of $\fD_n$, or as an object of $\tilde{\fD}_{0,n}$ (for $n \ge 0$) or $\tilde{\fD}_{-n,0}$ (for $n \le 0$). We also let $X=X_1$ and $Y=X_{-1}$. We note that for $n \ge 0$ we have $X_n \cong X^{\otimes n}$ and $X_{-n} = Y^{\otimes n}$, in either $\tilde{\fD}$ or $\fD$.

\begin{lemma} \label{lem:Xrigid}
$X$ is a rigid object of $\fD$ with dual $Y$.
\end{lemma}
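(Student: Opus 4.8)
The plan is to verify rigidity straight from Definition~\ref{defn:rigid}: produce a coevaluation $\alpha\colon\bone\to X\otimes Y$ and an evaluation $\beta\colon Y\otimes X\to\bone$ in $\fD$, and check the two zig-zag identities. Everything is done by lifting to $\tilde{\fD}$, where the objects involved are honest modules and the maps are honest module maps. First I would identify the objects. Since $Y=A_{-1}$ and $X=A_{1}$ are by construction the regular modules over $A_{1,0}=A$ and $A_{0,1}=A^{\op}$, placed in $\tilde{\fD}_{1,0}$ and $\tilde{\fD}_{0,1}$, the product $Y\otimes X$ lives in $\tilde{\fD}_{1,1}$ and is the free rank-one module $A\otimes A^{\op}=A_{1,1}$; while $X\otimes Y$ lives in $\tilde{\fD}_{1,1}$ with underlying $\fC$-object $A^{\op}\otimes A$ and $A_{1,1}$-action $(a\otimes b)\cdot(x\otimes y)=(xb)\otimes(ay)$ — the factor-reordering built into the tensor product of $\tilde{\fD}$ is what makes this action "crossed.'' Finally $\bone_{\fD}$ is the image of $E=A\in\tilde{\fD}_{1,1}$ (with its standard bimodule structure $(a\otimes b)\cdot z=azb$) under $\tilde{\fD}\to\fD=\tilde{\fD}\{E\}$, and by the $\Hom$ computations in the proof of Lemma~\ref{lem:D-equiv}, morphisms in $\fD$ between objects coming from $\tilde{\fD}_{1,1}$ are exactly $A_{1,1}$-module maps. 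So it suffices to exhibit $\beta\in\Hom_{A_{1,1}}(Y\otimes X,E)$ and $\alpha\in\Hom_{A_{1,1}}(E,X\otimes Y)$.

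For the evaluation take $\beta$ to be the multiplication $A\otimes A^{\op}\to A$, $y\otimes x\mapsto yx$; one checks $\beta((a\otimes b)\cdot(y\otimes x))=ayxb=(a\otimes b)\cdot\beta(y\otimes x)$, so $\beta$ is $A_{1,1}$-linear. For the coevaluation, use the invariant element $\pi\in\Gamma(A\otimes A)$ of the S-structure and set $\alpha(z)=(1\otimes z)\cdot\pi$, the product taken in the algebra $A\otimes A$ and the result viewed in $X\otimes Y=A^{\op}\otimes A$ via the identity on underlying objects. Expanding, the $A_{1,1}$-linearity of $\alpha$ reduces to the identity $(1\otimes b)\pi=\pi(b\otimes 1)$, which is a special case of $m\pi=\pi\tau(m)$, i.e.\ of $\tau=\lambda_{\pi}\rho_{\pi}$ together with $\pi^{2}=1$ (cf.\ the proof of Theorem~\ref{thm:azu-T}). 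Thus $\alpha$ and $\beta$ are well-defined morphisms of $\fD$.

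It then remains to verify the two zig-zag identities in $\fD$. Here one must unwind the identifications carefully: the maps $\alpha\otimes\id_{X}$ and $\id_{X}\otimes\beta$ interact with the unit isomorphism $\Pi(E)\cong\bone_{\fD}$ of the trivialization, with the symmetry isomorphisms $\tilde{\tau}$ of $\tilde{\fD}$ (several of which are nontrivially $\pi$-twisted), and with the equivalences $-\otimes E\colon\tilde{\fD}_{n,m}\to\tilde{\fD}_{n+1,m+1}$ underlying Lemma~\ref{lem:D-equiv}. (A naive element computation that ignores these twists gives, e.g., $x\mapsto\mu(\pi)\cdot x$ for the first zig-zag, which is \emph{not} the identity in general — so the twists genuinely matter.) After the unwinding, both zig-zags become diagrammatic identities in the invariant algebras $\Gamma(A^{\otimes\bullet})$ that are consequences of the S-algebra axioms $\pi^{2}=1$, $\tau=\lambda_{\pi}\rho_{\pi}$, $\mu=(\id\otimes\epsilon)\rho_{\pi}$ — essentially the same computation used to prove that $(\pi,\epsilon\mu)$ is a self-duality of $A$. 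Only the first zig-zag needs to be treated, the second being entirely symmetric. As a by-product the same bookkeeping identifies $\uEnd(X)=X\otimes Y$ with $\Phi(A)$, which is what the subsequent construction of the splitting structure will need.

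The main obstacle is precisely this last bookkeeping: keeping straight the three "coordinate systems'' in play — $\tilde{\fD}$ versus $\fD$, the bidegree-$(n,m)$ description versus the single-algebra $A_{n}$-module description afforded by Lemma~\ref{lem:D-equiv}, and the $\otimes$ versus $\boxtimes$ presentations of the tensor product — and tracking exactly which $\pi$-twists are introduced by the symmetry and Morita isomorphisms along the way. Once the translations are fixed, the zig-zag identities are formal consequences of the S-structure axioms, and I do not expect any genuinely new difficulty beyond the self-duality computation for $A$ already carried out in \S\ref{s:azumaya}.
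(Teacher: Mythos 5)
Your setup coincides with the paper's: the same coevaluation $\alpha(z)=(1\otimes z)\pi$ and evaluation $\beta=\mu$, the same strategy of lifting everything to $\tilde{\fD}$, and your verification that $\alpha$ and $\beta$ are $A_{1,1}$-linear is correct (the identity $(1\otimes b)\pi=\pi(b\otimes 1)$ does follow from $\tau=\lambda_{\pi}\rho_{\pi}$ and $\pi^{2}=1$; the paper leaves this to the reader). You also correctly identify the pitfall: a naive zig-zag computation gives multiplication by $\mu(\pi)=\epsilon(1)=\udeg(A)$, not the identity, so the $\pi$-twists in the structure of $\tilde{\fD}$ and $\fD$ are essential.

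The gap is that the zig-zag verification itself is left as an assertion. You write that ``after the unwinding, both zig-zags become diagrammatic identities... I do not expect any genuinely new difficulty,'' but the unwinding is the proof. The paper handles this by a clean change of target: instead of trying to chase the unit isomorphisms of $\fD=\tilde{\fD}\{E\}$ and the equivalences $-\otimes E$ and track which twist lands where (which is what your proposal tacitly requires), it shows directly in $\tilde{\fD}$ that
\begin{displaymath}
E\otimes X \xrightarrow{\;\alpha\otimes\id\;} X\otimes Y\otimes X \xrightarrow{\;\id\otimes\beta\;} X\otimes E
\end{displaymath}
equals the symmetry isomorphism $\tilde{\tau}_{E,X}$, via the short computation
\begin{displaymath}
(\id\otimes\beta)(\alpha\otimes\id)(a\otimes b)=(\id\otimes\id\otimes\epsilon)\bigl((1\otimes a\otimes b)\pi_1\pi_2\bigr)=\pi(a\otimes b)=(b\otimes a)\pi.
\end{displaymath}
Once this is in hand, the zig-zag identity in $\fD$ is automatic, because $E$ becomes $\bone$ there and the symmetry $\bone\otimes X\to X\otimes\bone$ is the identity under the unit identifications. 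This reformulation is the step your proposal is missing: without it, you still face exactly the bookkeeping you flag as the ``main obstacle,'' and the proof is not complete until it is carried out.
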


\begin{proof}
We work in $\tilde{\fD}$ in this paragraph. Define maps
\begin{align*}
\alpha \colon E &\to X \otimes Y, &
\beta \colon Y \otimes X &\to E \\
x &\mapsto (1 \otimes x)\pi, &
x \otimes y &\mapsto (\id \otimes \epsilon)((x \otimes y)\pi).
\end{align*}
Note that $\beta$ is simply the multiplication map $\mu$. One readily verifies that these are maps in $\tilde{\fD}$. We claim that the composition
\begin{displaymath}
\xymatrix{
E \otimes X \ar[r]^-{\alpha \otimes \id} &
X \otimes Y \otimes X \ar[r]^-{\id \otimes \beta} &
X \otimes E }
\end{displaymath}
is the symmetry isomorphism. Indeed, we have
\begin{align*}
(\id \otimes \beta)(\alpha \otimes \id)(a \otimes b)
&= (\id \otimes \id \otimes \epsilon)((1 \otimes a \otimes b) \pi_1 \pi_2) \\
&= (\id \otimes \id \otimes \epsilon)(\pi_1 (a \otimes 1 \otimes b) \pi_2) \\
&= \pi (a \otimes b) = (b \otimes a) \pi.
\end{align*}
Here $\pi_1$ and $\pi_2$ are as before Proposition~\ref{prop:braid}. In the first step, we simply applied the definitions; in the second, we used that conjugation by $\pi_1$ transposes the first two tensor factors; in the third, we use the relationship between $\mu$ and $(\pi, \epsilon)$, and the fact that $\pi_1$ pulls out of the map $\id \otimes \id \otimes \epsilon$; in the final step, we again used that conjugation by $\pi$ transposes tensor factors. This computation verifies the claim, as the final expression is indeed the explicit formula for the symmetry isomorphism in this case.

We now apply the natural functor $\tilde{\fD} \to \fD$. We see that (the images of) $\alpha$ and $\beta$ define co-evaluation and evaluation maps between $X$ and $Y$. The above computation verifies the first zig-zag axiom (see Definition~\ref{defn:rigid}), and the second one follows similarly.
\end{proof}

\begin{corollary}
$X_n$ is a rigid object of $\fD$ with dual $X_{-n}$, for any $n \in \bZ$.
\end{corollary}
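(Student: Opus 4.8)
The corollary states that $X_n$ is rigid with dual $X_{-n}$ for all $n \in \bZ$. The plan is to leverage the previous lemma (which handles $n = 1$) together with the standard fact that tensor products of rigid objects are rigid, with dual given by the tensor product of the duals in reversed order.

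First I would recall that $X_n \cong X^{\otimes n}$ for $n \ge 0$ and $X_{-n} \cong Y^{\otimes n}$ for $n \ge 0$, as already noted in the paragraph preceding Lemma~\ref{lem:Xrigid}, and that $X = X_1$ has dual $Y = X_{-1}$ by that lemma. Next I would invoke the general principle that in any symmetric monoidal category, if $X$ is rigid with dual $Y$, then $X^{\otimes n}$ is rigid with dual $Y^{\otimes n}$: the (co)evaluation maps are built by composing $n$ copies of $\alpha$ and $\beta$ in the standard nested fashion, and the zig-zag identities for the tensor power follow formally from those for $X$ (one peels off the outermost $\alpha$/$\beta$ pair and applies induction). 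Since $\bone$ is rigid (self-dual) as well, this covers $n = 0$ trivially. Thus for $n \ge 0$, $X_n = X^{\otimes n}$ is rigid with dual $Y^{\otimes n} = X_{-n}$, and for $n \le 0$, applying the same to $Y$ in place of $X$ (noting $Y$ is rigid with dual $X$ since duality is symmetric in a symmetric monoidal category) gives that $X_{-n} = Y^{\otimes n}$ is rigid with dual $X^{\otimes n} = X_n$; equivalently $X_n$ is rigid with dual $X_{-n}$.

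There is essentially no obstacle here: the corollary is a formal consequence of Lemma~\ref{lem:Xrigid} and elementary properties of rigidity in symmetric monoidal categories. The only point requiring a word of care is the identification $X_n \cong X^{\otimes n}$ and $X_{-n} \cong Y^{\otimes n}$, which was already recorded in the text, so the proof can simply cite it and then appeal to the closure of the class of rigid objects under tensor products. If one wanted to be fully explicit, one could write out the iterated (co)evaluation maps $\alpha_n \colon \bone \to X^{\otimes n} \otimes Y^{\otimes n}$ and $\beta_n \colon Y^{\otimes n} \otimes X^{\otimes n} \to \bone$ as the evident composites and check the zig-zags by induction on $n$, but this is entirely routine.
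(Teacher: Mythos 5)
Your proof is correct and follows exactly the intended route: the paper states the corollary without proof immediately after Lemma~\ref{lem:Xrigid}, relying on the identifications $X_n \cong X^{\otimes n}$, $X_{-n} \cong Y^{\otimes n}$ recorded just before the lemma and on the standard closure of rigid objects under tensor products. Nothing to add.
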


Since $X$ is a rigid object of $\fD$ in degree~1, it follows that $\uEnd(X)=X \otimes X^*$ is an S-algebra in $\fD_0=\fC$.

\begin{lemma}
We have a natural isomorphism of S-algebras $i \colon A \to \uEnd(X)$ in $\fC$.
\end{lemma}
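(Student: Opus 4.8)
The plan is to exhibit an explicit isomorphism $i \colon A \to \uEnd(X)$ in $\fC = \fD_0$ and then check it respects the S-structures. Recall that $X = X_1$ is the object of $\tilde{\fD}_{0,1}$ given by $A^{\op}$ with its natural left $A^{\op}$-module structure, and $X^* = Y = X_{-1}$ is $A$ as a left $A$-module; by the Morita identifications of \S\ref{ss:tildeD}, $\uEnd(X) = X \otimes X^*$ is computed in $\tilde{\fD}_{1,1}$, giving $A^{\op} \otimes A$ with its $A_{1,1}$-module structure, and then passing to $\fD_0 = \fC$ amounts to applying the Morita equivalence $\Mod_{A_{1,1}} \cong \fC$ afforded by the Azumaya property. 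Concretely, since $A \otimes A^{\op} \to \uEnd(A)$ is an isomorphism (Azumaya), the object of $\fC$ underlying $\uEnd(X)$ is identified with $A$ itself. So first I would make the Morita comparison completely explicit: trace through how $E = A \in \tilde{\fD}_{1,1}$ is sent to $\bone$ under $\tilde{\fD}\{E\} = \fD$, and how this converts the $A_{1,1}$-module $A^{\op} \otimes A$ into an object of $\fC$, checking that this object is naturally $A$.

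Next I would identify the algebra structure. The multiplication on $\uEnd(X)$ is composition of endomorphisms, which under the self-duality $(\pi, \epsilon\mu)$ of $A$ (established in the proposition preceding Corollary~\ref{cor:dim-deg}) and the identification $\uEnd(X) \cong A \otimes A^*\cong A\otimes A$ becomes a concrete formula involving $\pi$ and $\epsilon$ — in fact exactly the formula $\phi(x\otimes y) = (x\otimes y)\pi$ computed in the proof of Theorem~\ref{thm:azu-T}(b), which shows $A \otimes A^{\op} \to \uEnd(A)$ is $\rho_\pi$. The upshot is that the composite isomorphism $A \to A \otimes A^{\op} \to \uEnd(A) \cong \uEnd(X)$ is an algebra map; I would verify this is multiplicative by the same diagrammatic manipulations (conjugation by $\pi_i$ realizing transpositions, the relation $\mu = (\id\otimes\epsilon)\rho_\pi$) already used repeatedly in \S\ref{s:azumaya}. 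The key structural input is simply that $A$ is Azumaya with the self-duality coming from its S-structure, so none of this requires anything beyond what is proved.

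Then I would check the S-structures match. The S-structure on $\uEnd(X)$ has trace map $\epsilon_{\uEnd(X)}$ equal to the evaluation $\beta$ for $X$, and $\pi_{\uEnd(X)} \in \Gamma(\uEnd(X)^{\otimes 2})$ equal to the symmetry of $X \otimes X$. Under the identification $\uEnd(X) \cong A$, I claim $\epsilon_{\uEnd(X)}$ becomes $\epsilon$ and $\pi_{\uEnd(X)}$ becomes $\pi$. The first is essentially the defining property $\beta = \mu$ of the evaluation map in Lemma~\ref{lem:Xrigid} combined with $\mu = (\id\otimes\epsilon)\rho_\pi$; the second is the content of the computation in Lemma~\ref{lem:Xrigid} showing the first zig-zag composite equals $(b\otimes a)\pi$, i.e.\ the symmetry on $X\otimes X$ is implemented by $\pi$. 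So both identities reduce to the formulas already derived for the co/evaluation maps of $X$.

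I expect the main obstacle to be bookkeeping rather than mathematics: keeping straight the three layers of identification — the $\tilde{\fD}_{1,1}$ description of $\uEnd(X)$, the Morita equivalence $\Mod_{A_{1,1}} \cong \fC$ used to land in $\fD_0$, and the trivialization functor $\tilde{\fD} \to \fD$ sending $E$ to $\bone$ — and making sure the algebra structure on $\uEnd(X)$ (composition of internal endomorphisms, which involves the duality data and a symmetry) is transported correctly through all of them, including sign conventions (the $m - n$ grading convention and the fact that $A = A_{-1}$). Once the identification $\uEnd(X) \cong A$ is pinned down with its algebra and S-structure, the verification of $i$ being an S-algebra isomorphism is a direct consequence of the explicit formulas for $\alpha$, $\beta$ in Lemma~\ref{lem:Xrigid} and of $\rho_\pi$ being the Azumaya isomorphism, with no genuinely new computation required.
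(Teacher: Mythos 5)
Your plan is essentially the paper's own argument: the key isomorphism is $\rho_\pi$, defined first in $\tilde{\fD}$ (the paper writes it as $j=\rho_\pi\colon A\otimes E\to X\otimes Y$, where all four objects are copies of $A$ in bidegrees $(0,0)$, $(1,1)$, $(0,1)$, $(1,0)$), which descends to $i\colon A\to\uEnd(X)$ in $\fD$; one then checks compatibility with the unit and with multiplication by diagram chases using $\pi^2=1$, conjugation by $\pi$ implementing transpositions, and $\mu=(\id\otimes\epsilon)\rho_\pi$. Your description via Morita theory for $A_{1,1}$ is just a more abstract packaging of the same identification, and your observation that the algebra structure reduces to the $\rho_\pi$-computation from Theorem~\ref{thm:azu-T}(b) is exactly the content of the paper's multiplication diagram.

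The one place you go beyond the paper is that you propose to explicitly verify that $i$ matches $\epsilon$ with the trace on $\uEnd(X)$ and $\pi$ with the symmetry of $X^{\otimes 2}$. The paper's written proof only checks that $i$ is an algebra isomorphism and does not spell out the S-structure comparison (it presumably regards it as following from the construction, or from the essential uniqueness of S-structures in Proposition~\ref{prop:Sclass}). Your sketch of this step is plausible but not nailed down: identifying $\epsilon_{\uEnd(X)}\circ i=\epsilon$ from $\beta=\mu$ is a short computation, but your claim that the zig-zag computation in Lemma~\ref{lem:Xrigid} ``shows the symmetry on $X\otimes X$ is implemented by $\pi$'' needs care — that computation concerns the symmetry of $E\otimes X$, not $X\otimes X$, and the actual check goes through the formula $\tilde\tau_{X,X}=\rho_{\pi}\circ\tau$ for modules in bidegree $(0,1)$, followed by unwinding the identification $\Gamma(\uEnd(X)^{\otimes 2})\cong\End(X\otimes X)$ under $i$. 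That is routine bookkeeping of the kind you flag, but as written it is a gap you would need to fill.
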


\begin{proof}
We have an isomorphism in $\tilde{\fD}$
\begin{displaymath}
j \colon A \otimes E \to X \otimes Y
\end{displaymath}
given by $j=\rho_{\pi}$; here $E$, $A$, $X$, $Y$ are each copies of $A$, but placed in degrees $(1,1)$, $(0,0)$, $(0,1)$, and $(1,0)$. The isomorphism $j$ induces an isomorphism $i \colon A \to X \otimes Y$ in $\fD$. We must now show that $i$ is an algebra isomorphism.

Consider the diagram in $\tilde{\fD}$
\begin{displaymath}
\xymatrix@C=3em{
E \ar[d]_{1 \otimes \id} \ar[rd]^{\alpha} \\
A \otimes E \ar[r]^-j & X \otimes Y }
\end{displaymath}
where 1 is the unit of $A$, and $\alpha$ is the map from the proof Lemma~\ref{lem:Xrigid}. The diagram clearly commutes. Passing to $\fD$, we see that $i$ is compatible with units.

Consider the diagram in $\tilde{\fD}$
\begin{displaymath}
\xymatrix@C=4em{
A \otimes E \otimes A \otimes E \ar[d]_{j \otimes j} \ar[r]^-{\id \otimes \tilde{\tau} \otimes \id} &
A \otimes A \otimes E \otimes E \ar[r]^-{\mu \otimes \id \otimes \id} &
A \otimes E \otimes E \ar[d]^{j \otimes \id} \\
X \otimes Y \otimes X \otimes Y \ar[r]^-{\id \otimes \beta \otimes \id} &
X \otimes E \otimes Y \ar[r]^-{\id \otimes \tilde{\tau}} &
X \otimes Y \otimes E }
\end{displaymath}
where $\tilde{\tau}$ is the symmetry in $\tilde{\fD}$, and $\beta$ is from the proof of Lemma~\ref{lem:Xrigid}. We show that this diagram commutes; this will prove that $i$ is compatible with multiplication. Call the top path in the diagram $\phi$ and the bottom path $\psi$. Since $A$ is in degree $(0,0)$, the symmetry $\tilde{\tau}$ on $A \otimes E$ is simply the symmetry $\tau$ from $\fC$. We therefore have
\begin{displaymath}
\phi(a_1 \otimes b_1 \otimes a_2 \otimes b_2) = (a_1 a_2 \otimes b_1 \otimes b_2) \pi_1.
\end{displaymath}
Let $\psi'$ be the composition of the first two maps defining $\psi$. We have
\begin{displaymath}
\psi'(a_1 \otimes b_1 \otimes a_2 \otimes b_2) = (\id \otimes \id \otimes \epsilon \otimes \id)((a_1 \otimes b_1 \otimes a_2 \otimes b_2)\pi_1 \pi_3 \pi_2).
\end{displaymath}
Now, it will be somewhat clearer to have the $\epsilon$ on the final tensor factor; we can arrange this by conjugating the argument by $\pi_3$. We thus have
\begin{displaymath}
\psi'(a_1 \otimes b_1 \otimes a_2 \otimes b_2) = (\id \otimes \id \otimes \id \otimes \epsilon)(\pi_3 (a_1 \otimes b_1 \otimes a_2 \otimes b_2)\pi_1 \pi_3 \pi_2 \pi_3).
\end{displaymath}
We now move the first three $\pi$'s on the right to the left, permuting the tensor factors:
\begin{displaymath}
\psi'(a_1 \otimes b_1 \otimes a_2 \otimes b_2) = (\id \otimes \id \otimes \id \otimes \epsilon)(\pi_1 \pi_2 (b_1 \otimes b_2 \otimes a_1 \otimes a_2) \pi_3).
\end{displaymath}
Now, $\pi_1 \pi_2$ pulls out of the function, and what remains is the multiplication map on the 3rd and 4th factors. We thus find
\begin{displaymath}
\psi'(a_1 \otimes b_1 \otimes a_2 \otimes b_2) = \pi_1 \pi_2 (b_1 \otimes b_2 \otimes a_1 a_2).
\end{displaymath}
Now, to obtain $\psi$ we must apply $\id \otimes \tau'$. Since $E$ is in degree $(1,1)$ and $Y$ is in degree $(1,0)$, we have $\tilde{\tau} = \lambda_{\pi} \tau$ on $E \otimes Y$; for these particular modules, this becomes $\tilde{\tau}=\rho_{\pi}$. We thus have
\begin{displaymath}
\psi(a_1 \otimes b_1 \otimes a_2 \otimes b_2) = \pi_1 \pi_2 (b_1 \otimes b_2 \otimes a_1 a_2) \pi_2.
\end{displaymath}
Moving the $\pi$'s on the left to the right, we find $\psi=\phi$, as required.
\end{proof}

We have thus shown that $(\fD, \Phi, X, i)$ is a splitting category for $A$.

\subsection{The split case} \label{ss:split-case}

Suppose now that $A=\uEnd(V)$ is S-split, where $V$ is a rigid object of $\fC$. Let $V_1=V^*$ equipped with its canonical $A^{\op}$-module structure, and regarded as an object of $\tilde{\fD}_{0,1}$ or of $\fD_1$. Similarly, let $V_{-1}=V$ regarded in $\tilde{\fD}_{1,0}$ or $\fD_{-1}$.

\begin{lemma} \label{lem:Vinvert}
$V_1$ is an invertible object of $\fD$ with sign $+1$.
\end{lemma}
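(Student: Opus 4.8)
The plan is to identify the two relevant tensor products in $\tilde{\fD}$ with the object $E$ (which becomes the unit in $\fD$), and then to read the sign off the explicit formula for the symmetry of $\tilde{\fD}$.

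\emph{Invertibility.} I would mimic the proof of Lemma~\ref{lem:Xrigid}. In $\tilde{\fD}$ the objects $V_1\otimes V_{-1}$ and $V_{-1}\otimes V_1$ both lie in bidegree $(1,1)$; unwinding the definition of the tensor product of $\tilde{\fD}$, their underlying objects of $\fC$ are $V^*\otimes V$ and $V\otimes V^*$ respectively, each with the evident $A_{1,1}=A\otimes A^{\op}$-module structure coming from the action of $A=\uEnd(V)$ on the $V$-factor and of $A^{\op}=\uEnd(V^*)$ on the $V^*$-factor. Since $E_{1,1}=A=V\otimes V^*$ carries the same bimodule structure, one may take $\beta\colon V_{-1}\otimes V_1\to E$ to be the identity and $\alpha\colon E\to V_1\otimes V_{-1}$ to be the symmetry $\tau$ of $\fC$; a short check shows $\tau$ is $A_{1,1}$-linear for these structures. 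Exactly as in Lemma~\ref{lem:Xrigid}, one then verifies that the composite $E\otimes V_1\xrightarrow{\alpha\otimes\id}V_1\otimes V_{-1}\otimes V_1\xrightarrow{\id\otimes\beta}V_1\otimes E$ equals the symmetry of $\tilde{\fD}$ (the twisting $\pi$'s in that symmetry are exactly what makes this come out right), and similarly for the other zig-zag. Applying the functor $\tilde{\fD}\to\fD$, under which $E\mapsto\bone$, the images of $\alpha$ and $\beta$ become honest coevaluation and evaluation maps presenting $V_1$ as rigid in $\fD$ with dual $V_{-1}$; since $\alpha$ and $\beta$ are isomorphisms, $V_1$ is invertible.

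\emph{The sign.} Since $V_1$ is invertible, so is $V_1^{\otimes 2}$, and its symmetry is a scalar, which I would compute from the explicit symmetry of $\tilde{\fD}$. The object $V_1\otimes V_1$ sits in bidegree $(0,2)$, with underlying object $V^*\otimes V^*=(V\otimes V)^*$ and module structure given by the right $A^{\otimes 2}$-action by precomposition. Here the symmetry formula reads $\tilde{\tau}=\rho_{\pi}\circ\tau$, where $\tau$ is the symmetry of $\fC$ on $V^*\otimes V^*$ and $\rho_{\pi}$ is precomposition by $\pi$. But under the identification $\Gamma(A^{\otimes 2})=\End(V\otimes V)$ the element $\pi$ is the symmetry of $V\otimes V$, so precomposition by $\pi$ on $(V\otimes V)^*$ is once again the symmetry of $V^*\otimes V^*$. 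Hence $\tilde{\tau}$ restricted to $V_1^{\otimes 2}$ is the square of the symmetry of $V^*\otimes V^*$, i.e.\ the identity, so the sign of $V_1$ is $+1$.

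The only real work is the bookkeeping of module structures: matching the $A_{1,1}$-module structures on $V_1\otimes V_{-1}$, $V_{-1}\otimes V_1$ and $E$ in the first part, and correctly specializing the symmetry formula of $\tilde{\fD}$ to the bidegree $(0,2)$ piece in the second. There is no conceptual difficulty---splitness of $A=\uEnd(V)$ means that $V$ implements a Morita equivalence between $A$ and $\bone$, which is precisely what forces $V^*$ to become invertible once $E$ is trivialized, and the sign is $+1$ because $\pi$ records the plain (unsigned) symmetry of $V\otimes V$.
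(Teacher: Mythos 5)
Your proof is correct and follows essentially the same route as the paper: both establish $V_1\otimes V_{-1}\cong E$ (hence $\cong\bone$ in $\fD$) for invertibility, and both read the sign off the explicit symmetry formula of $\tilde{\fD}$ together with the fact that for a split $A=\uEnd(V)$ the element $\pi$ is the symmetry of $V\otimes V$. The only differences are cosmetic: you verify the zig-zag identities explicitly where the paper simply observes that $V_1\otimes V_{-1}\cong\bone$ already implies invertibility, and you compute the sign on $V_1^{\otimes 2}$ (bidegree $(0,2)$, using $\rho_\pi\tau$) whereas the paper computes on $V_{-1}^{\otimes 2}$ (bidegree $(2,0)$, using $\lambda_\pi\tau$) and deduces the sign of $V_1$ as its inverse.
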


\begin{proof}
It follows immediately from the definitions that we have a natural isomorphism $V_1 \otimes V_{-1} = E$ in $\tilde{\fD}$, and thus an isomorphism $V_1 \otimes V_{-1} = \bone$ in $\fD$. We thus see that $V_1$ is an invertible object of $\fD$. The symmetry $\tilde{\tau}$ on $V_{-1}^{\otimes 2}=V^{\otimes 2}$ is the map $\lambda_{\pi} \circ \tau$, which is the identity by definition of S-split. Thus $V_{-1}$ has sign $+1$, and so $V_1$ does too.
\end{proof}

Let $\fC^{\bZ}$ denote the category of $\bZ$-graded objects in $\fC$, where all but finitely many graded components are zero. For an object $M$ of $\fC$, we let $M[n]$ denote the corresponding object of $\fC^{\bZ}$ concentrated in degree $n$. The category $\fC^{\bZ}$ carries a natural tensor product which makes it into a tensor category. Giving a tensor functor $\Theta \colon \fC^{\bZ} \to \fT$, for some tensor category $\fT$, is equivalent to giving a tensor functor $\Theta_0 \colon \fC \to \fT$ and an invertible object $L \in \fT$ of sign $+1$ (meaning the symmetry of $L^{\otimes 2}$ is trivial). Precisely, given $\Theta$, we take $\Theta_0$ to be its restriction to $\fC$, which is the degree~0 subcategory of $\fC^{\bZ}$, and we take $L=\Theta(\bone[1])$.

The above discussion shows that we have a natural tensor functor
\begin{displaymath}
\Theta \colon \fC^{\bZ} \to \fD
\end{displaymath}
that is the identity in degree~0, and satisfies $\Theta(\bone[1])=V_1$. We note that we have a natural isomorphism $X=\Theta(V[1])$. The following is the main result we are after:

\begin{lemma}
The above functor $\Theta$ is an equivalence.
\end{lemma}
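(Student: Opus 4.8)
The plan is to verify the three hypotheses of Corollary~\ref{cor:uni-char} for the tuple $(\fC^{\bZ}, \Theta_0, \bone[1], j)$, where $\Theta_0$ is the degree-$0$ inclusion $\fC \hookrightarrow \fC^{\bZ}$ and $j$ is the evident $S$-isomorphism $A = \uEnd(V) \to \uEnd(V[1])$; equivalently, one shows directly that $\Theta$ is fully faithful and essentially surjective. Since we already know $(\fD, \Phi, X, i)$ is the universal splitting category, and since $\Theta$ is the tensor functor it supplies when fed the splitting category $\fC^{\bZ}$ (with splitting object $V[1]$), it suffices to check that $\fC^{\bZ}$ itself satisfies conditions (a)--(c) of Corollary~\ref{cor:uni-char}: it is $\bZ$-graded with $V[1] \in (\fC^{\bZ})_1$ (immediate); $\Theta_0 \colon \fC \to (\fC^{\bZ})_0$ is an equivalence (immediate, it is the identity on the degree-$0$ piece); and every object of $\fC^{\bZ}$ is a summand (in fact equal to a finite direct sum of shifts) of objects of the form $\Theta_0(M) \otimes (V[1])^{\otimes r}$. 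For the last point I would note that $V[1]$ is invertible with inverse $V^*[-1]$ (this is essentially Lemma~\ref{lem:Vinvert}, together with $\uEnd(V) = V \otimes V^*$ split), so $(V[1])^{\otimes r} \cong (V^{\otimes r})[r]$ for $r \ge 0$ and $\cong ((V^*)^{\otimes(-r)})[r]$ for $r < 0$; then for any $M \in \fC$ the object $M[r]$ is a summand of $(M \otimes (V^*)^{\otimes r})[0] \otimes (V[1])^{\otimes r} = \Theta_0(M \otimes (V^*)^{\otimes r} \otimes V^{\otimes r}) \otimes (V[1])^{\otimes r}$, using that $V$ is rigid so $M$ is a summand of $M \otimes V^{\otimes r} \otimes (V^*)^{\otimes r}$. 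Every object of $\fC^{\bZ}$ is a finite direct sum of such $M[r]$, so condition (c) holds.

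Alternatively, and perhaps more cleanly, I would prove full faithfulness of $\Theta$ by hand. Because both $\fC^{\bZ}$ and $\fD$ are $\bZ$-graded and $\Theta$ respects the grading, it suffices to compare $\Hom$-spaces between homogeneous objects, and by rigidity of $V[1]$ resp.\ $X$ one reduces every such $\Hom$-space to one of the form $\Hom(\Theta_0(M), \Theta_0(N) \otimes (V[1])^{\otimes \ell})$ resp.\ $\Hom(\Phi(M), \Phi(N) \otimes X^{\otimes \ell})$. For $\ell \ne 0$ both vanish by the grading. For $\ell = 0$ both equal $\Hom_{\fC}(M,N)$ via the equivalences $(\fC^{\bZ})_0 \simeq \fC \simeq \fD_0$ (the latter from Theorem~\ref{thm:split}(a) / Lemma~\ref{lem:D-equiv}), and $\Theta$ is the identity on degree $0$ by construction, so it is an isomorphism on these $\Hom$-spaces. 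Full faithfulness follows. Essential surjectivity follows from the description of $\fD_n$ after Lemma~\ref{lem:D-equiv} as relatively projective $A_n$-modules: since $A = \uEnd(V)$ is split (hence Morita trivial), $\Mod_{A_n} \simeq \fC$, so every object of $\fD_n$ is of the form $\Phi(M) \otimes X^{\otimes n}$ up to isomorphism; but $X = \Theta(V[1])$ and $\Phi(M) = \Theta(M[0])$, so it lies in the essential image of $\Theta$.

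The main obstacle I anticipate is not conceptual but bookkeeping: making sure the identification $\fD_n \simeq \Mod^{\mathrm{relp}}_{A_n} \simeq \fC$ is compatible with the tensor structure and with the specific isomorphisms $X = \Theta(V[1])$, $V_1 \otimes V_{-1} = E \mapsto \bone$, so that $\Theta$ really is a tensor functor and really does send $\bone[1]$ to $V_1$; this is where one must be careful that the sign conventions (degree $m-n$, the $\lambda_\pi$-twist in $\tilde\tau$, the fact that $V_{-1}^{\otimes 2}$ has trivial symmetry because $A$ is $S$-split) all line up. I would also take care, in either approach, that the reduction of $\Hom$-spaces via rigidity is genuinely the same reduction on both sides — i.e.\ that $\Theta$ intertwines the evaluation/coevaluation maps for $V[1]$ with those for $X$ constructed in Lemma~\ref{lem:Xrigid} — since Corollary~\ref{cor:uni-char} is invoked precisely to package this, the cleanest write-up is the first one: just verify (a)--(c) and cite Corollary~\ref{cor:uni-char}.
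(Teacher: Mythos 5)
Your first approach, via Corollary~\ref{cor:uni-char}, is circular. That corollary's proof begins ``Let $(\fD, \Phi, X, i)$ be the universal splitting category,'' so it presupposes Theorem~\ref{thm:split}. But the lemma you are proving sits inside the proof of Theorem~\ref{thm:split}: it is the content of \S\ref{ss:split-case}, and the universality argument in the subsequent subsection explicitly invokes \S\ref{ss:split-case} (``Since $A'$ is S-split, $\fD'$ is the category of $\bZ$-graded objects in $\fC'$\ldots Thus the functor $\Phi'$ splits''). So at the point where this lemma is proved, you do not yet know $\fD$ is universal, and you cannot appeal to Corollary~\ref{cor:uni-char}. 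There is also a directional mismatch you gloss over: the functor supplied by universality would go $\fD \to \fC^{\bZ}$, while the $\Theta$ in the lemma goes $\fC^{\bZ} \to \fD$; even if universality were available, one would still have to check these are quasi-inverse, rather than treating them as interchangeable.

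Your second approach is sound and is essentially what the paper does, just less economically. The paper observes that the restriction of $\Theta$ to degree $n$ is the functor $\fC \to \fD_n = \Mod^{\mathrm{relp}}_{A_n}$, $M \mapsto (V^*)^{\otimes n} \otimes M$ (for $n \ge 0$, and the mirror for $n < 0$), which is a Morita equivalence since $A_n = \uEnd(V)^{\otimes n}$ is split, $A_n$ is separable, and $\fC$ is Karoubian. An equivalence in each degree is automatically fully faithful and essentially surjective, so your separate hand-computation of $\Hom$-spaces, while correct, is redundant once you've identified the degree-$n$ restriction as a Morita equivalence. I would also flag one small point in your essential-surjectivity paragraph: you should check, as the paper does parenthetically, that the inverse Morita functor $N \mapsto V^{\otimes n} \otimes_{A_n} N$ actually exists — this is where separability of $A$ and Karoubianness of $\fC$ enter, since $\fC$ need not be abelian.

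In short: drop the appeal to Corollary~\ref{cor:uni-char} (it is downstream of the result you are proving), and trim the second argument to the degree-by-degree Morita identification.
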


\begin{proof}
The restriction of $\Theta$ to degree $n \ge 0$ is the functor $\fC \to \fD_n=\Mod_{A_n}$ given by $M \mapsto (V^*)^{\otimes n} \otimes M$, which is an equivalence by Morita theory. (The inverse equivalence is $N \mapsto V^{\otimes n} \otimes_{A_n} N$, which exists since $A_n$ is separable and $\fC$ is Karoubian.) The negative degree case is similar.
\end{proof}

\subsection{Functoriality} \label{ss:split-func}

Let $\Psi \colon \fC \to \fC'$ be a tensor functor, let $A'=\Psi(A)$, and let $\fD'$ be defined like $\fD$, but relative to $A'$. If $M$ is an $A_{n,m}$-module in $\fC$ then $\Psi(M)$ is naturally an $A'_{n,m}$-module in $\fC'$; moreover, if $M$ is relatively projective then so is $\Psi(M)$. It follows that $\Psi$ induces a functor
\begin{displaymath}
\tilde{\Psi} \colon \tilde{\fD} \to \tilde{\fD}'.
\end{displaymath}
It is clear from the construction that $\tilde{\Psi}$ is naturally a tensor functor. It is also clear that $\tilde{\Psi}$ maps $E$ to $E'$. It therefore induces a tensor functor
\begin{displaymath}
\Psi^+ \colon \fD \to \fD'.
\end{displaymath}
Thus the construction of the category $\fD$ is functorial. We note that the diagram
\begin{displaymath}
\xymatrix@C=3em{
\fD \ar[r]^{\Psi^+} & \fD' \\
\fC \ar[u]^{\Phi} \ar[r]^{\Psi} & \fC' \ar[u]_{\Phi'} }
\end{displaymath}
commutes (up to isomorphism), and $\Psi^+(X) \cong X'$ naturally.

\subsection{Universality}

We have already shown that $(\fD, \Phi, X, i)$ is a splitting category. We now show that it is universal. Thus suppose that $(\fT, \Psi, Y, j)$ is a second splitting category for $A$. We must construct a tensor functor $\Theta \colon \fD \to \fT$ and isomorphisms as in Definition~\ref{defn:usplit}.

For notational consistency, put $\fC'=\fT$. Let $A'=\Psi(A)$, and let $\Phi' \colon \fC' \to \fD'$ be the universal splitting category for $A'$. We have an induced tensor functor $\Psi^+ \colon \fD \to \fD'$ (see \S \ref{ss:split-func}). Since $A'$ is S-split, $\fD'$ is the category of $\bZ$-graded objects in $\fC'$ (see \S \ref{ss:split-case}). Thus the functor $\Phi'$ splits, that is, we have a tensor functor $\Pi \colon \fD' \to \fC'$ such that $\Pi \circ \Phi' = \id$. We define $\Theta=\Pi \circ \Psi^+$. The following diagram summarizes the situation:
\begin{displaymath}
\xymatrix@C=5em{
\fD \ar[r]^{\Psi^+} \ar@{..>}[rd]^{\Theta} & \fD' \ar@<-3pt>[d]_{\Pi} \\
\fC \ar[u]^{\Phi} \ar[r]^{\Psi} & \fC' \ar@<-3pt>[u]_{\Phi'} }
\end{displaymath}
It is clear that the requisite isomorphisms exist.

We must also show that $\Theta$ (together with the auxiliary isomorphisms) are unique up to isomorphism. For this, we note that every object of $\fD$ is a summand of an object of the form $M \otimes X_n$ with $M \in \fC$ and $n \in \bZ$. Thus a tensor functor out of $\fD$ is determined by its restriction to $\fC$ and where it sends $X$. This leads to the requisite uniqueness. We omit the details.

\subsection{Completion of proof}

At this point, we have shown that $(\fD, \Phi, X, i)$ is a universal splitting category and that statements~(a) and~(b) of Theorem~\ref{thm:split} hold. We now verify statement~(c). In what follows, we use that $\fD_n$ is equivalent to the category of relatively projective $A_n$-modules (Lemma~\ref{lem:D-equiv}).
\begin{itemize}
\item Suppose that $\fC$ is rigid. If $M$ is an object of $\fC$ then $M$ is a rigid object of $\fC=\fD_0$. Since $X_n$ is also a rigid object of $\fD$, it follows that $M \otimes X_n$ is rigid. Since every (homogeneous) object of $\fD$ is a summand of one of these objects, it follows that $\fD$ is rigid.
\item If $\fC$ is $\Hom$-finite then the category of $A_n$-modules is also $\Hom$-finite, and so $\fD$ is $\Hom$-finite.
\item Since $A$ is separable, so is $A_n$. Thus every $A_n$-module in $\fC$ is relatively projective (Proposition~\ref{prop:separable}), and so $\fD_n$ is equivalent to the category of all $A_n$-modules in $\fC$, which is abelian. Thus $\fD$ is abelian. 
\item If $\fC$ is semi-simple then the category of $A_n$-modules is semi-simple (Corollary~\ref{cor:azumss}), and coincides with the category of relatively projective modules, and so $\fD$ is semi-simple.
\item If $\fC$ is pre-Tannakian then so is $\fD$, by the above results.
\end{itemize}

\section{Splitting categories: complements} \label{s:split2}

\subsection{An example}

Let $\fC$ be the category of real vector spaces, and let $\bH$ be the usual Hamilton quaternions with basis 1, $i$, $j$, $k$. Define $\epsilon \colon \bH \to \bR$ by $\epsilon(1)=2$ and $\epsilon(x)=0$ if $x$ is one of $i$, $j$, or $k$. Also define $\pi \in \bH \otimes \bH$ by
\begin{displaymath}
\pi = \tfrac{1}{2} ( 1\otimes 1 - i \otimes i - j \otimes j - k \otimes k).
\end{displaymath}
Then $(\pi, \epsilon)$ is an S-structure on $\bH$. The S-algebra $\bH^{\otimes 2}$ is S-split, and so we get a $\bZ/2$-graded splitting category $\fD$ as in \S \ref{ss:msplit}. By the construction, $\fD_0=\fC$ is the category of real vector spaces, while $\fD_1$ is the category of (left) $\bH$-vector spaces.

Deligne pointed out an elegant model of the splitting category. Define $\fD'$ to be the category whose objects are $\bZ/2$-graded $\bC$-vector spaces $V$ equipped with a conjugate linear involution $\sigma \colon V \to V$ such that $\sigma^2(x)=(-1)^n \cdot x$ for $x \in V_n$. This category admits a natural symmetric tensor product, induced from the usual tensor product of graded vector spaces. The tensor category $\fD'$ is equivalent to the tensor category $\fD$ above.

\subsection{The Tannakian case}

Suppose $k$ is algebraically closed and $\fC=\Rep(G)$ is the category of finite dimensional representations of an affine group scheme $G$ over $k$. Let $A$ be an Azumaya algebra in $\fC$. Ignoring the $G$-action, $A$ is an Azumaya algebra in the category of finite dimensional vector spaces, and thus of the form $\End(V)$ for some vector space $V$. Since the automorphism group of the algebra $\End(V)$ is the algebraic group $\PGL(V)$, the action of $G$ on $A$ corresponds to a homomorphism $G \to \PGL(V)$ of algebraic groups, i.e., $V$ is a projective representation of $G$.

Let $\tilde{G}$ be the pull-back of $G$ along the natural map $\GL(V) \to \PGL(V)$; this is again an affine group scheme, and a central extension of $G$ by $\bG_m$. The universal splitting category of $A$ is $\fD=\Rep(\tilde{G})$; here we endow $A$ with the canonical S-structure on $\End(V)$. The functor $\fC \to \fD$ is simply pull-back along the projection $\tilde{G} \to G$. The $\bZ$-grading of $\fD$ is induced from the central $\bG_m$ in $\tilde{G}$. The space $V$ is naturally a linear representation of $\tilde{G}$, and we have an algebra isomorphism $A=\uEnd(V)$ in $\fD$.

\subsection{Connection to $\GL_t$} \label{ss:pgl}

Suppose $k$ has characteristic~0, and fix $t \in k$. We briefly recall the construction of the category $\uRep(\GL_t)$; see \cite{DeligneMilne} or \cite{Deligne} for additional details. To begin, we define a skeleton category $\uRep^0(\GL_t)$. The objects are formal symbols $V_{n,m}$ with $n,m \in \bN$, morphisms are linear combinations of walled Brauer diagrams, and composition uses the usual diagrammatic rule with parameter $t$. The category $\uRep(\GL_t)$ is defined to be the additive--Karoubi envelope of $\uRep^0(\GL_t)$. This category is $\bZ$-graded, with $V_{n,m}$ in degree $n-m$. We define $\uRep(\PGL_t)$ to be the degree~0 piece.

We let $V=V_{1,0}$, which is a degree~1 object of $\uRep(\GL_t)$ of categorical dimension $t$, and we let $A=\uEnd(V)$, which is an S-algbera of $\uRep(\PGL_t)$ of categorical degree $t$. Clearly, we can regard $\uRep(\GL_t)$ as a splitting category for $A$. In fact:

\begin{proposition}
The category $\uRep(\GL_t)$ is the universal splitting category of $A$.
\end{proposition}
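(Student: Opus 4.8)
The plan is to verify the intrinsic characterization of the universal splitting category given in Corollary~\ref{cor:uni-char}. We have a splitting category $(\uRep(\GL_t), \Psi, V, j)$ where $\Psi \colon \uRep(\PGL_t) \to \uRep(\GL_t)$ is the inclusion of the degree~$0$ piece, and we must check the three hypotheses of that corollary. Hypotheses (a) and (b) are immediate from the construction: $\uRep(\GL_t)$ is $\bZ$-graded with $V = V_{1,0}$ in degree~$1$, and $\uRep(\PGL_t)$ is by definition the degree~$0$ piece, so $\Psi$ is an equivalence onto $\uRep(\GL_t)_0$. Hypothesis (c) is the real content: we must show every object of $\uRep(\GL_t)$ is a summand of $\Psi(M) \otimes V^{\otimes r}$ for some $M \in \uRep(\PGL_t)$ and $r \in \bZ$.

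For (c), first reduce to the skeleton: since $\uRep(\GL_t)$ is the additive--Karoubi envelope of $\uRep^0(\GL_t)$, it suffices to treat the generating objects $V_{n,m}$. Using that $V = V_{1,0}$ is rigid with dual $V^* = V_{0,1}$ (via the walled Brauer diagram calculus, where $\udim(V) = t$), we have $V_{n,m} \cong V^{\otimes n} \otimes (V^*)^{\otimes m}$, which is an object of degree $n - m =: d$. The point is then to exhibit $V_{n,m}$ as a summand of $(V \otimes V^*)^{\otimes k} \otimes V^{\otimes d}$ for suitable $k$, because $(V \otimes V^*)^{\otimes k} = A^{\otimes k}$ lives in degree~$0$, hence comes from $\uRep(\PGL_t)$ under $\Psi$, and $V^{\otimes d}$ is $V^{\otimes r}$ with $r = d$ (interpreting negative tensor powers via $V^*$ as in the convention preceding Corollary~\ref{cor:uni-char}). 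For $d \ge 0$, take $k = m$: then $(V \otimes V^*)^{\otimes m} \otimes V^{\otimes d} \cong V^{\otimes(m+d)} \otimes (V^*)^{\otimes m} \cong V^{\otimes n} \otimes (V^*)^{\otimes m}$ after reordering tensor factors (the symmetry is an isomorphism), which is exactly $V_{n,m}$ up to isomorphism — in fact an equality of objects here, not merely a summand. The case $d < 0$ is symmetric, taking $k = n$ and using $V^{\otimes d} = (V^*)^{\otimes(-d)}$. So (c) holds, and Corollary~\ref{cor:uni-char} applies.

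The only subtlety I anticipate is bookkeeping about the S-structure: Corollary~\ref{cor:uni-char} takes as input a splitting category, i.e., an isomorphism of \emph{S-algebras} $j \colon \Psi(A) \to \uEnd(V)$, not merely an abstract algebra. But here $A$ was \emph{defined} as $\uEnd(V)$ with its canonical S-structure (the trace map $\epsilon$ of categorical degree $t$ and the symmetry element $\pi$), and $\Psi$ is fully faithful, so $j = \id$ works and the compatibility is tautological. One should also note in passing that $A$ is separable: since $\udim(A) = t^2 \ne 0$ when $t \ne 0$ we are in the situation where Theorem~\ref{thm:split} directly applies, but the intrinsic characterization in Corollary~\ref{cor:uni-char} does not actually require separability as a hypothesis — it only requires that a universal splitting category exist, which for $\uRep(\PGL_t)$ it does by Theorem~\ref{thm:split} (separability being automatic here). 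I do not expect any genuine obstacle; the proof is essentially an exercise in matching the walled Brauer category's grading and rigidity against the abstract universal property, and the hardest line is simply checking hypothesis (c), which the factorization $V_{n,m} \cong A^{\otimes \min(n,m)} \otimes V^{\otimes(n-m)}$ makes transparent.
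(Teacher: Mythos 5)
Your proof is correct and takes the same approach as the paper, which states only that the result ``follows immediately from Corollary~\ref{cor:uni-char}''; you have simply spelled out the three hypothesis checks, with the key observation for (c) being the factorization $V_{n,m}\cong A^{\otimes\min(n,m)}\otimes V^{\otimes(n-m)}$.
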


\begin{proof}
This follows immediately from Corollary~\ref{cor:uni-char}.
\end{proof}

The categories $\uRep(\GL_t)$ and $\uRep(\PGL_t)$ have important mapping properties:

\begin{proposition}
Let $\fC$ be a Karoubian tensor category.
\begin{enumerate}
\item Giving a tensor functor $\uRep(\GL_t) \to \fC$ is equivalent to giving a rigid object of $\fC$ of categorical dimension $t$, via $\Phi \mapsto \Phi(V)$.
\item Giving a tensor functor $\uRep(\PGL_t) \to \fC$ is equivalent to giving an S-algebra of $\fC$ of categorical degree $t$, via $\Phi \mapsto \Phi(A)$.
\end{enumerate}
\end{proposition}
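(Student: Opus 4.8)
The plan is to obtain part~(a) from the classical universal property of the Deligne--Milne category, and part~(b) from part~(a) together with the preceding proposition (which identifies $\uRep(\GL_t)$ with the universal splitting category of $A$) and the results of \S\ref{s:split}.

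For part~(a): a $k$-linear symmetric monoidal functor out of the skeleton $\uRep^0(\GL_t)$ is the same as a rigid object $W=\Phi(V)$ of $\fC$ together with a realization of the walled Brauer generators by the structure morphisms (evaluation, coevaluation, symmetry) of $W$; modulo the axioms of a symmetric rigid monoidal category, the defining relations among walled Brauer diagrams reduce to the ``bubble'' relation $\beta\circ\alpha = t\cdot\id_{\bone}$, which says precisely that $\udim(W)=t$. Conversely, any rigid $W$ with $\udim(W)=t$ determines such a functor, and since $\fC$ is Karoubian this extends uniquely along the additive--Karoubi completion; this universal property is classical, see \cite{DeligneMilne}. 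In particular, two tensor functors $\uRep(\GL_t)\to\fC$ are isomorphic as soon as they carry $V$ to isomorphic objects, and $\Phi\mapsto\Phi(V)$ is an equivalence of groupoids, with monoidal natural isomorphisms on the source and isomorphisms of rigid objects on the target.

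For part~(b): the assignment $\Phi\mapsto\Phi(A)$ does land among S-algebras of categorical degree $t$, since every tensor functor preserves categorical degree and hence $\udeg(\Phi(A))=\udeg(A)=t$. To construct an inverse, fix an S-algebra $B$ of categorical degree $t$ in $\fC$ and let $(\fD,\Phi_B,X,i)$ be its universal splitting category. This exists: if $t\ne 0$ then $\udim(B)=t^2\ne 0$ by Corollary~\ref{cor:dim-deg}, so $B$ is separable by Proposition~\ref{prop:separable} and Theorem~\ref{thm:split} applies; if $t=0$ one invokes instead the version of the construction valid without separability (\S\ref{ss:insep}). By Theorem~\ref{thm:split}, $\fD$ is a Karoubian $\bZ$-graded tensor category, $\Phi_B\colon\fC\to\fD_0$ is an equivalence, $X\in\fD_1$ is rigid, and $i\colon\Phi_B(B)\to\uEnd(X)$ is an isomorphism of S-algebras; in particular $\udim(X)=\udeg(\uEnd(X))=\udeg(B)=t$. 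Applying part~(a) to the Karoubian category $\fD$ produces a tensor functor $G\colon\uRep(\GL_t)\to\fD$ with $G(V)\cong X$. Since every object of $\uRep(\GL_t)$ is a retract of some $V^{\otimes n}\otimes(V^{*})^{\otimes m}$, with $V$ in degree $1$ and $V^{*}$ in degree $-1$, and since $G$ necessarily sends $V^{*}$ to $G(V)^{*}\in\fD_{-1}$, the functor $G$ respects the $\bZ$-gradings; write $G'\colon\uRep(\PGL_t)\to\fD_0$ for its restriction to the degree-$0$ parts, and set $\Theta_B=\Phi_B^{-1}\circ G'\colon\uRep(\PGL_t)\to\fC$. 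As $A=\uEnd(V)$ lies in degree $0$, this gives $\Theta_B(A)\cong\uEnd(G(V))\cong\uEnd(X)\cong B$ as S-algebras.

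It then remains to verify that $\Phi\mapsto\Phi(A)$ and $B\mapsto\Theta_B$ are mutually quasi-inverse. One composite is exactly the isomorphism $\Theta_B(A)\cong B$ just exhibited. For the other, given $\Phi\colon\uRep(\PGL_t)\to\fC$ set $B=\Phi(A)$; then $(\fD,\ \Phi_B\circ\Phi,\ X,\ i)$ is a splitting category for $A$ (viewed in $\uRep(\PGL_t)$), so by the universal property of $\uRep(\GL_t)$ there is a tensor functor $\Theta\colon\uRep(\GL_t)\to\fD$ whose restriction to $\uRep(\PGL_t)$ is isomorphic to $\Phi_B\circ\Phi$ and which satisfies $\Theta(V)\cong X$. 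This $\Theta$ is graded by the same argument as for $G$, so by the uniqueness clause of part~(a) we get $\Theta\cong G$; restricting to degree $0$ and composing with $\Phi_B^{-1}$ yields $\Phi\cong\Theta_B$. Running the same constructions on natural isomorphisms of functors and on isomorphisms of S-algebras upgrades this to an equivalence of groupoids. I expect the one point requiring genuine care to be the case $t=0$, where $B$ need not be separable and one must use the non-separable splitting-category construction of \S\ref{ss:insep}; everything else is either part~(a) or a formal consequence of \S\ref{s:split}, the main effort being the bookkeeping with the $\bZ$-gradings and the equivalence $\fD_0\cong\fC$.
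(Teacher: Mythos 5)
Your proof is correct and takes essentially the same route as the paper: part (a) is Deligne's universal property, and for part (b) you pass to the universal splitting category of $B$, apply part (a) to obtain a functor out of $\uRep(\GL_t)$, and restrict along the degree-zero inclusion. The paper explicitly omits the verification that the resulting functor $\uRep(\PGL_t)\to\fC$ is unique up to isomorphism; your last paragraph supplies exactly that bookkeeping (compatibility with gradings, uniqueness via the universal property of the splitting category and part (a)), so your write-up is a fleshed-out version of the paper's sketch rather than a different argument.
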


\begin{proof}
(a) is \cite[Proposition~10.3]{Deligne}. As far as we know, (b) does not appear explicitly in the literature, though it is likely known to experts; indeed, Noah Snyder has explained to us that it can be deduced from the results of \cite{Thurston}. It can also be deduced from our results, as follows. Let $B$ be an S-algebra in $\fC$ of categorical degree $t$. Let $\fD$ be the universal splitting category for $B$, and write $B=\uEnd(X)$ with $X \in \fD$. (We note that if $t=0$ then $\fD$ and $X$ are still provided by the constructions from \S \ref{s:split}.) By (a), there is a tensor functor $\uRep(\GL_t) \to \fD$ mapping $V$ to $X$. This restricts to a tensor functor $\Phi \colon \uRep(\PGL_t) \to \fC$ mapping $A$ to $B$. We omit the proof that $\Phi$ is uniquely determined up to isomorphism.
\end{proof}

Suppose now that we have an S-algebra $B$ in a Karoubian tensor category $\fC$ of categorical degree $t \ne 0$. As in the above proof, let $\fD$ be the universal splitting category of $B$, and write $B=\uEnd(X)$ where $X \in \fD$ has categorical dimension $t$. We have a diagram of tensor categories
\begin{displaymath}
\xymatrix{
\fC \ar[r] & \fD \\
\uRep(\PGL_t) \ar[u] \ar[r] & \uRep(\GL_t) \ar[u] }
\end{displaymath}
where the vertical maps come from the universal properties. The universality of $\fD$ exactly means that this is a push-out square. Thus the existence of universal splitting categories can be reformulated as saying that such push-outs always exist in the (1-truncation of the) 2-category of Karoubian tensor categories.

\begin{remark}
Harman has discussed how certain constructions with algebraic groups can be viewed as (co)limits in the 2-category of Tannakian categories. For instance, if $G$ is an algebraic group over a finite field $\bF$ then $\Rep(G(\bF))$ is the co-equalizer of the diagram $\Rep(G) \rightrightarrows \Rep(G)$, where the functors are the identity and Frobenius twist. The above description of the universal splitting category is similar in spirit.
\end{remark}

\subsection{The inseparable case} \label{ss:insep}

In \S \ref{s:split}, we assume that our S-algebra $A$ was separable. We now discuss what happens if we drop this assumption. The constructions of $\tilde{\fD}$, $\fD$, and the splitting structure on $\fD$ all go through; thus we still obtain a $\bZ$-graded extension of $\fC$ in which $A$ splits. Some issues start to arise \S \ref{ss:split-case}. Here, $A=\uEnd(V)$ is assumed to be split, and we made use of the Morita equivalence between $\fC$ and $\Mod_A$. This is likely not true in all tensor categories as we have defined them. However, this will still go through in a fairly general setting, e.g., if $\fC$ is abelian. This means that $\Phi \colon \fC \to \fD$ will still satisfy a universal property, but only among splitting categories for which this kind of Morita theory is valid.

There is one other issue in the inseparable case: it may not be the case that all $A$-modules are relatively projective, and so the category $\fD$ may fail to be abelian even if $\fC$ is abelian. To remedy this, one can redefine $\tilde{\fD}_{n,m}$ to be the category of all $A_{n,m}$-modules. This will yield an abelian tensor category in which $A$ splits. However, some other issues now arise. For instance, it is less clear if this version of $\fD$ inherits rigidity from $\fC$. We have not investigated these issues in detail.

\subsection{Change of S-structure}

Let $\fC$ be a tensor category, let $A$ be an S-algebra in $\fC$ with S-structure $(\pi, \epsilon)$, and let $A'$ be the algebra $A$ endowed with the S-structure $(-\pi, -\epsilon)$. Let $\fD$ and $\fD'$ be the universal splitting categories of $A$ and $A'$. The construction from \S \ref{s:split} shows that $\fD$ and $\fD'$ have the same underlying $k$-linear monoidal category; it is only the symmetry of the tensor product that differs. The two symmetries are related by the familiar super sign rule. Precisely, let $\tau$ be the symmetry on $\fD$, and $\tau'$ the one on $\fD'$. If $M$ and $N$ are objects of degrees $m$ and $n$ then $\tau'_{M,N} = (-1)^{nm} \tau_{M,N}$. This follows from a close examination of the construction.

\subsection{$\bZ/m$-gradings} \label{ss:msplit}

Let $A$ and $\fC$ be as in \S \ref{s:split}. Suppose we have an S-isomorphism $A^{\otimes m} = \uEnd(V)$ for some rigid object $V$ of $\fC$. We can then consider the following variant of the notion of splitting category:

\begin{definition}
An \defn{$m$-splitting category} for $A$ is a tuple $(\fD, \Phi, X, i, j)$ where $(\fD, \Phi, X, i)$ is a splitting category for $A$ and $j \colon \Phi(V) \to X^{\otimes m}$ is an isomorphism.
\end{definition}

We now explain how to construct a universal $m$-splitting category. Start with the universal splitting category $(\fD', \Phi', X', i')$ for $A$. Let $V_{-m} \in \fD_{-m}$ be $V$ regarded as an $A_{-m}$-module, and let $V_m \in \fD_m$ be $V^*$ regarded as an $A_m$-module. The same argument as used in Lemma~\ref{lem:Vinvert} shows that $V_{\pm m}$ is invertible with sign $+1$. Let $\fD=\fD'\{V_m\}$ be the trivialization of $V$ as in \S \ref{ss:triv1}. There is a natural functor $\fD' \to \fD$. We thus obtain a tensor functor $\Phi \colon \fC \to \fD$, and the pair $(X, i)$. There is a natural isomorphism $(X')^{\otimes m} \cong V_m \otimes V$ in $\fD'$ (where $V \in \fD'_0$), which yields the isomorphism $j \colon \Phi(V) \to X^{\otimes m}$ in $\fD'$. Universality is proven similarly to the previous case. The construction shows that $\fD$ is $\bZ/m$-graded, with $\fD_0=\fC$.

\subsection{Supersymmetric categories}

We have concentrated on Azumaya algebras with trivial $\eta$ invariant. We now make some comments on the $\eta \ne 1$ case. Let $\fD$ be a $k$-linear category equipped with an invertible object $L$ of sign $-1$ satisfying $L^{\otimes 2}=\bone$. We can enrich $\fD$ in super vector spaces by defining a degree~1 map $X \to Y$ to be a map $X \to L \otimes Y$ in $\fD$. Suppose now that we also have a decomposition $\fD=\bigoplus_{n \in \bZ} \fD_n$, and $\fD$ carries a monoidal structure $\otimes$. A \defn{supersymmetry} on $\otimes$ is a functorial isomorphism $X \otimes Y \to Y \otimes X$ that has degree $nm$ when $X \in \fD_n$ and $Y \in \fD_m$ such that a number of axioms hold (most notably a version of the hexagon axiom). There a number of subtleties, and we refer to \cite{supermon} for details.

Suppose $\fD$ carries a supersymmetry. Let $V$ be a rigid object of degree~1, and put $A=\uEnd(V)$. Note that $A$ lives in $\fC=\fD_0$, which is an ordinary (symmetric) tensor category. We have a natural isomorphism
\begin{displaymath}
A \otimes A \to L \otimes A \otimes A
\end{displaymath}
by applying the supersymmetry to the two $V$ factors, which shows that $\eta(A)=L$.

One can carry out our construction of splitting categories in this setting, i.e., given an Azumaya algebra $A$ in a (symmetric) tensor category $\fC$ with $\eta(A)$ of sign $-1$, one can construct a $\bZ$-graded supersymmetric extension $\fD$ of $\fC$ where $A$ splits.

\section{Interpolation via ultraproducts} \label{s:ultra}

\subsection{Set-up} \label{ss:ultra-setup}

Fix a finite field $\bF$ of odd cardinality $q$, and an infinite index set $I$ equipped with a non-principal ultrafilter. For $i \in I$, fix the following data
\begin{description}[align=right,labelwidth=1.5cm,leftmargin=!]
\item [ $E_i$ ] a symplectic vector space over $\bF$ of finite dimension $d_i$
\item [ $G_i$ ] the symplectic group $\Sp(E_i)$
\item [ $k_i$ ] an algebraically closed field in which $q \ne 0$
\item [ $\fC_i$ ] the category of finite dimensional representations of $G_i$ over $k_i$
\item [ $V_i$ ] the permutation representation $k_i[E_i]$ of $G_i$, an object of $\fC_i$
\end{description}
Let $k$ be the ultraproduct of the $k_i$'s. Let $t_i \in k_i$ be the image of $q^{d_i}$, and let $t \in k$ be the ultralimit of the $t_i$'s. We make the following assumptions:
\begin{enumerate}
\item $k$ has characteristic~0
\item the ultralimit of $\{d_i\}$ (in the ultrapower of $\bN$) is not an integer.
\end{enumerate}
If each $k_i$ has characteristic~0 then $t$ is transcendental over $\bQ$. By carefully choosing the $k_i$ and $d_i$, we can realize any non-zero algebraic number as $t$. We say that $t$ is \defn{singular} if it is an even power of $q$.

Let $\tilde{\fC}$ be the ultraproduct of the $\fC_i$, which is a $k$-linear abelian tensor category. The sequence $\{V_i\}$ defines a rigid object $V$ of $\tilde{\fC}$ of categorical dimension $t$. We let $\fC$ be the subcategory of $\tilde{\fC}$ generated by $V$, i.e., $\fC$ is the full subcategory spanned by objects which occur as a subquotient of a finite direct sum of tensor powers of $V$. If the $k_i$'s have characteristic~0, it is easy to see that $\fC$ is semi-simple and pre-Tannakian. In general, $\fC$ is pre-Tannakian, and semi-simple if $t$ is a non-singular parameter. These statements are not obvious, and do not yet appear in the literature, though see \cite[\S 8]{Kriz} for related results; in any case, they will not be essential to our discussion. The category $\fC$ is one model for $\uRep(\Sp_t(\bF))$.

\begin{remark}
It is not clear from the definition that $\fC$ depends only on $t$. The oligomorphic approach clarifies this issue.
\end{remark}

\subsection{Twisted group algebras} \label{ss:SpS}

Fix a non-trivial additive character $\psi \colon \bF \to k^{\times}$. We let $A_i$ be the twisted group algebra of $E_i$. The vector space underlying $A_i$ is simply $V_i$. The product is defined by
\begin{displaymath}
[x] \cdot [y] = \psi(\langle x, y \rangle) [x+y].
\end{displaymath}
Let $t_i^{1/2} \in k_i$ be the square root $q^{d_i/2}$ of $t_i$. Define $\epsilon_i \colon A_i \to k$ and $\pi_i \in A_i \otimes A_i$ by
\begin{displaymath}
\epsilon_i \big( \sum c_x [x] \big) = t_i^{1/2} \cdot c_0, \qquad
\pi_i = t_i^{-1/2} \cdot \sum_{x+y=0} [x] \otimes [y].
\end{displaymath}
We now have:

\begin{proposition} \label{prop:tgas}
With the above data, $A_i$ is an S-algebra of formal degree $t_i^{1/2}$.
\end{proposition}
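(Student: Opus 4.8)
The plan is to verify directly that the pair $(\pi_i, \epsilon_i)$ satisfies the two defining identities of an S-structure from Definition~\ref{defn:Salg}, namely $\tau = \lambda_{\pi_i} \rho_{\pi_i}$ and $\mu = (\id \otimes \epsilon_i) \rho_{\pi_i}$, and then read off the formal degree as $\epsilon_i(1) = \epsilon_i([0]) = t_i^{1/2}$. All computations take place in the category of $G_i$-representations, but the $G_i$-equivariance is automatic: the symplectic form $\langle \cdot, \cdot \rangle$ and the character $\psi$ are $G_i$-invariant, so every map written down commutes with the group action. So the content is a finite-dimensional linear-algebra check on the twisted group algebra of the finite abelian group $E_i$.

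First I would record the basic multiplication facts. Writing $\omega(x,y) = \psi(\langle x, y\rangle)$, one has $[x][y] = \omega(x,y)[x+y]$, the unit is $[0]$, and $[x]^{-1} = \omega(x,x)^{-1}[-x] = [-x]$ since $\langle x,x\rangle = 0$; more generally $[-x][x] = \omega(-x,x)[0] = [0]$. The element $\pi_i = t_i^{-1/2}\sum_{x}[x]\otimes[-x]$ is manifestly invariant. For the normalization identity $\mu = (\id\otimes\epsilon_i)\rho_{\pi_i}$: unwinding, $\rho_{\pi_i}$ sends $[v]$ to $[v]\cdot\pi_i = t_i^{-1/2}\sum_x [v][x]\otimes[-x] = t_i^{-1/2}\sum_x \omega(v,x)[v+x]\otimes[-x]$, and applying $\id\otimes\epsilon_i$ picks out the $[-x]$-coefficient times $t_i^{1/2}$, i.e. only $x=0$ survives, giving $[v]$; by linearity this is the identity map, which is indeed $\mu$ precomposed with... wait — one must be careful: $\mu \colon A_i\otimes A_i \to A_i$, so I should instead check $(\id\otimes\epsilon_i)\circ\rho_{\pi_i}$ against $\mu$ as maps $A_i\otimes A_i \to A_i$. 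Here $\rho_{\pi_i}$ is the right-multiplication-by-$\pi_i$ operator on $A_i\otimes A_i$ (using the $A_i^{\otimes 2}$ algebra structure), so on $[v]\otimes[w]$ it gives $t_i^{-1/2}\sum_x ([v][x])\otimes([w][-x])$, and $(\id\otimes\epsilon_i)$ extracts $x$ with $w - x = 0$, i.e. $x = w$, yielding $t_i^{-1/2}\cdot t_i^{1/2}\cdot[v][w]\cdot(\text{coeff}) = \omega(v,w)\omega(w,-w)[v+w] = \omega(v,w)[v+w] = [v][w] = \mu([v]\otimes[w])$, as wanted.

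For the symmetry identity $\tau = \lambda_{\pi_i}\rho_{\pi_i}$, equivalently conjugation by $\pi_i$ equals the flip (after checking $\pi_i^2 = 1$, which follows since $\pi_i^2 = t_i^{-1}\sum_{x,y}[x][y]\otimes[-x][-y]$ and... actually it is cleaner to get $\pi_i^2=1$ a posteriori from Proposition by evaluating $\tau = \lambda_{\pi_i}\rho_{\pi_i}$ at $1$, so I would just verify the conjugation identity directly). The key computation is: $\pi_i \cdot([v]\otimes[w]) = t_i^{-1/2}\sum_x [x][v]\otimes[-x][w] = t_i^{-1/2}\sum_x \omega(x,v)\omega(-x,w)[x+v]\otimes[-x+w]$, and similarly $([w]\otimes[v])\cdot\pi_i = t_i^{-1/2}\sum_y [w][y]\otimes[v][-y] = t_i^{-1/2}\sum_y \omega(w,y)\omega(v,-y)[w+y]\otimes[v-y]$. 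Reindexing the second sum by $y = -x + w$ (so $w+y = w-x+w$... let me instead match $x+v$ with $v-y$, forcing $y = -x$, giving $[w-x]\otimes[v+x]$ — not matching) — the correct bookkeeping is to substitute so that both produce $\sum_x [v+x]\otimes[w-x]$ as the pair of group elements, then compare the scalar phases: the claim $\lambda_{\pi_i}\rho_{\pi_i}([v]\otimes[w]) = [w]\otimes[v]$ amounts, after this reindexing, to a Gauss-sum-type identity $t_i^{-1}\sum_x \psi(\langle x, v-w\rangle + c(x,v,w)) = \delta$-like collapse, and the relevant fact is that $\sum_{x\in E_i}\psi(\langle x, z\rangle) = |E_i|\cdot\mathbf{1}_{z=0} = q^{d_i}\cdot\mathbf{1}_{z=0} = t_i\cdot\mathbf{1}_{z=0}$ since $q^{d_i}$ maps to $t_i$, which exactly cancels the $t_i^{-1}$. \textbf{This orthogonality-of-characters cancellation is the main point} and the only place the choice $t_i^{1/2} = q^{d_i/2}$ (rather than an arbitrary square root) and the nondegeneracy of $\langle\cdot,\cdot\rangle$ both get used — nondegeneracy is what makes $x\mapsto\langle x,-\rangle$ an isomorphism $E_i \to E_i^\vee$, so the character sum vanishes unless the argument is zero. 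I would carry out the phase bookkeeping carefully (collecting all the $\omega$ factors, using $\langle x,x\rangle=0$ and bilinearity/antisymmetry of $\langle\cdot,\cdot\rangle$), reduce to this character sum, and conclude. Finally $\udeg(A_i) = \epsilon_i(\eta) = \epsilon_i([0]) = t_i^{1/2}$ by definition of $\epsilon_i$, completing the proof. The hard part is purely the sign/phase accounting in the conjugation identity; everything else is formal.
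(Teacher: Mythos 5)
Your approach is the same as the paper's: unwind the definitions, verify both S-axioms by a direct computation on the basis of point masses, and use orthogonality of characters (namely $\sum_{a \in E_i} \psi(\langle a, z\rangle) = t_i \cdot \mathbf{1}_{z=0}$, valid because the form is nondegenerate) as the key collapse. Your check of $\mu = (\id\otimes\epsilon_i)\rho_{\pi_i}$ is correct and identical to the paper's. For the symmetry axiom your writeup wavers between two tactics — verifying $\pi_i \cdot x = \tau(x) \cdot \pi_i$ (which requires establishing $\pi_i^2 = 1$ as a separate lemma, so the ``get $\pi_i^2=1$ a posteriori'' aside is circular in that version) and directly computing $\pi_i \cdot x \cdot \pi_i$. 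The paper takes the second route cleanly: expand $\pi_i\cdot[x,y]\cdot\pi_i$ as a double sum over $(u,v)\in E_i^2$ and substitute $a = u+v$, $b = u-v$; then $b$ parametrizes the resulting basis element $[x+b,\, y-b]$ and the $a$-sum is the character sum that collapses to $t_i\cdot\mathbf{1}_{b = y-x}$, killing the $t_i^{-1}$. That linear substitution is precisely the phase bookkeeping you left undone, and its invertibility (dividing by $2$) is where the $\operatorname{char}\bF \neq 2$ hypothesis is used — a point worth making explicit, since the paper flags it in the remark immediately following.
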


\begin{proof}
We identify $A_i \otimes A_i$ with $k_i[E_i \oplus E_i]$ in what follows, and write $[x,y]$ in place of $[(x,y)]=[x] \otimes [y]$. With this notation, we have
\begin{displaymath}
\pi_i = t_i^{-1/2} \sum_{u \in E_i} [u,-u].
\end{displaymath}
Let $x,y \in E_i$. Then
\begin{align*}
\pi_i \cdot [x,y] \cdot \pi_i
&= t_i^{-1} \sum_{(u,v) \in E_i \oplus E_i} [u,-u] \cdot [x,y] \cdot [-v, v] \\
&= t_i^{-1} \sum_{(u,v) \in E_i \oplus E_i} \psi(-2 \langle u,v \rangle+\langle u+v, x-y \rangle) [x+u-v,y-u+v].
\end{align*}
Put $a=u+v$ and $b=u-v$. We then have
\begin{align*}
\pi \cdot [x,y] \cdot \pi
&= t_i^{-1} \sum_{b \in E_i} \sum_{a \in E_i} \psi(\langle a, b+x-y \rangle) [x+b,y-b].
\end{align*}
The inner sum vanishes unless $b=y-x$, in which case it is $t_i$. We thus find
\begin{displaymath}
\pi \ast [x,y] \ast \pi = [y,x],
\end{displaymath}
and so $\tau=\lambda_{\pi} \rho_{\pi}$.

We have
\begin{displaymath}
[x,y] \ast \pi
= t_i^{-1/2} \sum_{u \in E_i} [x,y] \cdot [u,-u]
= t_i^{-1/2} \sum_{u \in E_i} \psi(\langle x-y, u \rangle) [x+u,y-u]
\end{displaymath}
Now apply $\id \otimes \epsilon$. Only that $u=y$ term in the integral contributes, and so we find
\begin{displaymath}
(\id \otimes \epsilon) \rho_{\pi} [x,y] = \psi(\langle x-y, y \rangle) [x+y] = \psi(\langle x, y \rangle) [x+y]
\end{displaymath}
This coincides with $\mu([x] \otimes [y])$, and so we have verified that $(\pi_i, \epsilon_i)$ is an S-structure. As $\epsilon_i(1)=t_i^{1/2}$, the result follows.
\end{proof}

We now let $(A, \pi, \epsilon)$ be the ultralimit of the $(A_i, \pi_i, \epsilon_i)$. This is an S-algebra in $\fC$ of categorical degree $t^{1/2}$. It is therefore also an Azumaya algebra in $\fC$ with $\tilde{\eta}(A)=1$ by Theorem~\ref{thm:azu-T}.

\begin{remark}
If $\bF$ has characteristic~2 then the algebra $A_i$ would be commutative, and thus could not admit a S-structure (the symmetry of $A_i \otimes A_i$ could not be inner). In the above proof, we used the characteristic $\ne 2$ hypothesis when we changed coordinates from $(u,v)$ to $(a,b)$, since the inverse transformation involves dividing by~2.
\end{remark}

\subsection{Splittings}

In what follows, we say that an anti-involution $\sigma$ of an S-algebra is \defn{admissible} if it preserves $\epsilon$, i.e., $\epsilon = \epsilon \circ \sigma$.

\begin{proposition} \label{prop:Asplit}
We have the following:
\begin{enumerate}
\item If $-1$ is a square in $\bF$ then $A_i$ admits an admissible anti-involution.
\item It is always the case that $A_i^{\otimes 2}$ admits an admissible anti-involution.
\end{enumerate}
\end{proposition}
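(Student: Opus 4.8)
The plan is to obtain each anti-involution simply by relabelling the natural basis $\{[x] : x \in E_i\}$ of $A_i$. Call an $\bF$-linear map $N \colon E_i \to E_i$ an \emph{anti-symplectic involution} if $N^2 = \id$ and $\langle Nx, Ny\rangle = -\langle x, y\rangle$ for all $x,y \in E_i$. The key point, from which both parts follow, is that for any such $N$ the $k_i$-linear map $\sigma_N \colon A_i \to A_i$ determined by $\sigma_N([x]) = [Nx]$ is an admissible anti-involution of $A_i$. So the whole problem reduces to exhibiting an anti-symplectic involution of $E_i$ — under the hypothesis of (a), and unconditionally for (b).

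First I would verify the key point. Since $N$ permutes $E_i$, the map $\sigma_N$ permutes the basis $\{[x]\}$, so it is a linear automorphism of $A_i$, and $\sigma_N \circ \sigma_N = \sigma_{N^2} = \id$. The one substantive check is that $\sigma_N$ is an algebra anti-homomorphism, and this is exactly where the anti-symplectic condition is used: since $\langle Ny, Nx\rangle = -\langle y,x\rangle = \langle x,y\rangle$,
\[ \sigma_N([y])\,\sigma_N([x]) = [Ny][Nx] = \psi(\langle Ny, Nx\rangle)[Nx+Ny] = \psi(\langle x,y\rangle)[N(x+y)] = \sigma_N([x][y]). \]
Admissibility is immediate: $N$ is linear, so $N^{-1}$ fixes $0$, hence $\sigma_N$ preserves the coefficient of $[0]$ and therefore $\epsilon_i \circ \sigma_N = \epsilon_i$.

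For (a), assume $-1 = \mu^2$ for some $\mu \in \bF^\times$. Choose a symplectic basis $e_1,\dots,e_m,f_1,\dots,f_m$ of $E_i$ (so $d_i = 2m$) and let $J \in \Sp(E_i)$ be the complex structure with $Je_j = f_j$ and $Jf_j = -e_j$, so $J^2 = -\id$. Put $N = \mu J$. Then $N^2 = \mu^2 J^2 = \id$ and $\langle Nx, Ny\rangle = \mu^2 \langle Jx, Jy\rangle = -\langle x,y\rangle$, so $N$ is an anti-symplectic involution and $\sigma_N$ is the required structure. For (b), with no hypothesis on $\bF$, take the same symplectic basis and let $N_0$ be the involution swapping $e_j \leftrightarrow f_j$. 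On the basis one has $\langle N_0 e_i, N_0 f_j\rangle = \langle f_i, e_j\rangle = -\langle e_i, f_j\rangle$ and $\langle N_0 e_i, N_0 e_j\rangle = \langle N_0 f_i, N_0 f_j\rangle = 0$, so $N_0$ is anti-symplectic, and $N_0^2 = \id$. Hence $\sigma_{N_0}$ is an admissible anti-involution of $A_i$, and then $\sigma_{N_0}\otimes\sigma_{N_0}$ is one for $A_i^{\otimes 2}$: it is an anti-homomorphism squaring to the identity, and it preserves the natural S-structure since each factor does. This proves (b).

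I expect the only real obstacle to be identifying the right map, not the verification. The tempting candidate when $-1 = \mu^2$ is scalar multiplication by $\mu$ on $E_i$, that is $[x] \mapsto [\mu x]$; this is already an admissible anti-automorphism, but it is \emph{not} an involution — it squares to the order-two automorphism $[x] \mapsto [-x]$, not to $\id$. The genuine content of the proposition is the condition $N^2 = \id$, and the point is that a symplectic $\bF$-space always admits an anti-symplectic \emph{involution} $N_0$ coming from a Lagrangian polarization, while if $-1$ is a square one can alternatively twist a complex structure by $\mu$. Everything beyond the choice of $N$ is a routine diagrammatic check.
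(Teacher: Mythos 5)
Your reduction ``find an anti-symplectic involution $N$ of $E_i$ and set $\sigma_N([x])=[Nx]$'' omits the one condition that carries all the content of the proposition: $\sigma_N$ must be a morphism in $\fC_i=\Rep(G_i)$, i.e.\ $G_i$-equivariant, which forces $N$ to commute with $\Sp(E_i)$. Since $E_i$ is an absolutely irreducible $\bF[\Sp(E_i)]$-module, the centralizer of $\Sp(E_i)$ in $\GL(E_i)$ consists of the scalars, so the only admissible $N$ on $E_i$ itself are $N=\mu\cdot\id$. Your complex structure $J$ and your swap $N_0\colon e_j\leftrightarrow f_j$ are not scalar and do not commute with $\Sp(E_i)$, so $\sigma_{\mu J}$ and $\sigma_{N_0}$ are not morphisms in $\fC_i$ at all. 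This is not a technicality: your argument would show that $A_i$ (hence, after taking ultralimits, $A$) admits an admissible anti-involution for \emph{every} odd $\bF$, whence $A^{\otimes 2}$ would always be S-split by Proposition~\ref{prop:anti-inv} --- directly contradicting the theorem that $A^{\otimes 2}$ is non-split and Corollary~\ref{cor:order4}, which says $[A]$ has order $4$ in $\Br(\fC)$ when $-1$ is a nonsquare. (As a plain algebra over an algebraically closed field, $A_i$ is a matrix algebra and of course has anti-automorphisms; the proposition is only meaningful equivariantly.) The correct constructions are: for (a), the scalar $N=\mu\cdot\id$ with $\mu^2=-1$; for (b), a $2\times 2$ matrix $M$ over $\bF$ with $M^{\mathsf T}M=-I$ acting on $E_i^{\oplus 2}$, which is equivariant for the diagonal $\Sp(E_i)$-action because its entries are scalars.

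Your side remark that $[x]\mapsto[\mu x]$ squares to $[x]\mapsto[-x]$ rather than the identity is correct, and in fact that scalar map is exactly what the paper uses in case (a) (similarly, the paper's $T$ in case (b) need not square to the identity). So strictly speaking the paper exhibits admissible anti-\emph{automorphisms} rather than anti-involutions. But this is harmless: the proof of Proposition~\ref{prop:anti-inv} only uses that $i$ is an algebra isomorphism $A\to A^{\op}$ preserving $\epsilon$; the condition $i^2=\id$ is never invoked. Your insistence on genuinely involutive $N$ is what pushed you toward the non-equivariant choices, and it buys nothing downstream.
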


\begin{proof}
(a) Let $i=\sqrt{-1}$, and let $\sigma \colon A_i \to A_i$ be the map $\sigma([x])=[ix]$. We have
\begin{displaymath}
\sigma([x] \cdot [y]) = \sigma(\psi(\langle x, y \rangle) [x+y]) = \psi(\langle x, y \rangle) [i(x+y)]
\end{displaymath}
and
\begin{displaymath}
\sigma([y]) \cdot \sigma([x]) = [iy] \cdot [ix] = \psi(\langle iy, ix \rangle) [i(x+y)] = \psi(\langle x, y\rangle) [i(x+y)],
\end{displaymath}
and so $\sigma$ is an anti-involution. It is clearly admisisble.

(b) Let $u,v \in \bF^2$ be vectors such that
\begin{displaymath}
u \cdot u=-1, \qquad u \cdot v = 0, \qquad v \cdot v=-1,
\end{displaymath}
where here $\cdot$ is the standard dot product. The existence of such $u$ and $v$ follows from the theory of quadratic forms over finite fields. Define\footnote{The idea to use this map came from \cite{DeligneLetter1}.}
\begin{displaymath}
T \colon E_i^{\oplus 2} \to E_i^{\oplus 2}, \qquad T(x,y)=(u_1 x+v_1 y, u_2 x+v_2 y).
\end{displaymath}
One readily verifies
\begin{displaymath}
\langle T(x,y), T(x',y') \rangle = - \langle (x,y), (x',y') \rangle.
\end{displaymath}
We now define $\sigma \colon A^{\otimes 2} \to A^{\otimes 2}$ by $\sigma([x,y])=[T(x,y)]$. The remainder of the proof is similar to case (a).
\end{proof}

\begin{corollary} \label{cor:Asplit}
We have the following:
\begin{enumerate}
\item If $-1$ is a square in $\bF$ then $A^{\otimes 2}$ is S-split.
\item It is always the case that $A^{\otimes 4}$ is S-split.
\end{enumerate}
\end{corollary}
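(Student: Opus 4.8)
The plan is to reduce the statement to the level-wise constructions of Proposition~\ref{prop:Asplit}, pass to the ultralimit, and then invoke Proposition~\ref{prop:anti-inv}. Concretely: in case (a) I want to produce an admissible anti-involution of $A$ itself and apply Proposition~\ref{prop:anti-inv} once; in case (b) I want to produce an admissible anti-involution of $B := A^{\otimes 2}$ and apply Proposition~\ref{prop:anti-inv} to $B$, which yields that $B^{\otimes 2} = A^{\otimes 4}$ is S-split.

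The first thing to check is that the anti-involutions produced in Proposition~\ref{prop:Asplit} are actually morphisms in the categories $\fC_i$, i.e.\ that they are $G_i$-equivariant. In case (a) the map $\sigma_i([x]) = [\sqrt{-1}\,x]$ commutes with the $G_i$-action because $G_i = \Sp(E_i)$ acts $\bF$-linearly on $E_i$ and multiplication by $\sqrt{-1} \in \bF$ is $\bF$-linear; in case (b) the map $\sigma_i([x,y]) = [T(x,y)]$ is equivariant for the diagonal $G_i$-action for the same reason, since $T$ has entries in $\bF$ and hence commutes with any $\bF$-linear endomorphism applied coordinatewise. Because $\bF$ (hence $\sqrt{-1}$, and the vectors $u,v \in \bF^2$) is fixed independently of $i$, the $\sigma_i$ form a coherent sequence and therefore define a morphism $\sigma = \lim_i \sigma_i$ in the ultraproduct category $\tilde{\fC}$; since $A$ (resp.\ $A^{\otimes 2}$) has underlying object $V$ (resp.\ $V \otimes V$) lying in $\fC$, this $\sigma$ is a morphism of $\fC$. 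The defining conditions --- that $\sigma$ be an algebra anti-homomorphism, that $\sigma^2 = \id$, and that $\epsilon \circ \sigma = \epsilon$ --- are all equalities of morphisms, so each holds for $\sigma$ because it holds for every $\sigma_i$ by Proposition~\ref{prop:Asplit} and the \L o\'s-type transfer built into the ultraproduct. Thus $A$ (in case (a)) and $A^{\otimes 2}$ (in case (b)) each carry an admissible anti-involution as S-algebras in $\fC$.

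With these in hand, Proposition~\ref{prop:anti-inv} finishes the argument immediately: applied to $A$ it gives that $A^{\otimes 2}$ is S-split, proving (a); applied to the S-algebra $A^{\otimes 2}$ with its admissible anti-involution it gives that $(A^{\otimes 2})^{\otimes 2} = A^{\otimes 4}$ is S-split, proving (b). The only step that requires genuine care --- and the one I would write out most carefully --- is the bookkeeping in the previous paragraph: verifying that $\sigma_i$ is really an endomorphism inside the subcategory $\fC_i$ generated by $V_i$ (not merely an equivariant map of the ambient representation), and that the ultralimit of a sequence of admissible anti-involutions, formed in $\tilde{\fC}$, restricts to a morphism of the generated subcategory $\fC$ and retains all three defining properties. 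Both points are routine given that $A$ and $A^{\otimes 2}$ are built directly from $V$, but they are the substance of the proof; everything else is a formal consequence of results already established.
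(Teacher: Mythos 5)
Your proof is correct and follows the same route as the paper: take ultralimits of the admissible anti-involutions from Proposition~\ref{prop:Asplit} to get admissible anti-involutions on $A$ (case (a)) and on $A^{\otimes 2}$ (case (b)), then apply Proposition~\ref{prop:anti-inv} to each. The extra care you take --- checking $G_i$-equivariance of $\sigma_i$, coherence of the family across $i$, and that the ultralimit lands in the generated subcategory $\fC$ --- is exactly the bookkeeping the paper compresses into the phrase ``applying the proposition and taking an ultralimit,'' so there is no substantive difference.
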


\begin{proof}
(a) Applying the proposition and taking an ultralimit, we see that $A$ carries an admissible anti-involution. Thus $A^{\otimes 2}$ is S-split by Proposition~\ref{prop:anti-inv}. Case (b) is similar.
\end{proof}

\begin{remark}
In fact, $A_i$ itslef is S-split, provided $k_i$ has characteristic~0: it is the endomorphism algebra of the oscillator representation. However, $A$ is not split (Corollary~\ref{cor:order4}), at least for generic $t$.
\end{remark}

\subsection{The main construction} \label{ss:maincon}

We have produced an S-algebra $A$ in $\fC$. It has categorical degree $t^{1/2}$, and thus categorical dimension $t$, and is hence separable (Proposition~\ref{prop:separable}). Theorem~\ref{thm:split} therefore applies, and produces a $\bZ$-graded universal splitting category. In fact, this comes from a $\bZ/2$-graded category if $-1$ is a square in $\bF$, or a $\bZ/4$-graded category if not, by Corollary~\ref{cor:Asplit} and \S \ref{ss:msplit}. This category is pre-Tannakian, and semi-simple for non-singular $t$ (if we grant these statements about $\fC$). The construction of these tensor categories is the main point of this paper.

\subsection{Comparison with Kriz's category} \label{ss:kriz}

We assume in \S \ref{ss:kriz} that $t$ is transcendental (and take $k_i=\bC$ for simplicity), and that $-1$ is not a square in $\bF$. This implies that $\fC$ is semi-simple. See Remark~\ref{rmk:comments} for comments on other cases.

Let $W_i$ be the oscillator representation (or Weil representation) of $G_i$ over $k_i$; see \cite[\S 2]{Kriz}. This representation has dimension $q^{d_i/2}$, and there is a natural identification $A_i=\uEnd(W_i)$ of algebras in $\fC_i$. We also have an isomorphism
\begin{equation} \label{eq:weil}
W_i^{\otimes 4} \cong A_i^{\otimes 2}
\end{equation}
by \cite[Lemma~15]{Kriz}. Let $W \in \tilde{\fC}$ be the ultralimit of the $W_i$'s. For $a \in \bZ$, define $\fD'_a$ to be the full subcategory of $\tilde{\fC}$ spanned by objects that occur as a subquotient of a finite direct sum of objects of the form $W^{\otimes a} \otimes V^{\otimes r}$, with $r$ arbitrary. This is an abelian category, and we clearly have $\fD'_a \otimes \fD'_b \subset \fD'_{a+b}$. Moreover, \eqref{eq:weil} implies $\fD'_a=\fD'_b$ if $a \equiv b \pmod{4}$. Deligne \cite{DeligneLetter1} proved the following result; see also \cite[\S 8]{Kriz}:

\begin{theorem} \label{thm:mod4}
If $a,b \in \bZ/4$ are distinct, $X \in \fD'_a$, and $Y \in \fD'_b$, then $\Hom_{\tilde{\fC}}(X,Y)=0$.
\end{theorem}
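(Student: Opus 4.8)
The plan is to push the statement down to the finite groups $G_i$, eliminate the permutation object $V$, and reduce to a disjointness statement for Jordan--H\"older constituents of mixed tensor powers of the Weil representation; the final step is where the genuine input of \cite{DeligneLetter1} is needed, while the earlier steps are formal.

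First I would use the standard description of the ultraproduct category: if $X$ and $Y$ are represented by sequences $(X_i)$ and $(Y_i)$, then $\Hom_{\tilde{\fC}}(X,Y)$ is the ultraproduct of the spaces $\Hom_{\fC_i}(X_i,Y_i)$, and an object of $\fD'_a$ is represented by a sequence $(X_i)$ in which $X_i$ is a subquotient of $\bigoplus_j W_i^{\otimes a}\otimes V_i^{\otimes r_j}$ for $\cU$-almost all $i$ (the finite set $\{r_j\}$ being one that witnesses $X\in\fD'_a$). Since each $\fC_i=\Rep(G_i)$ has finite length, the image of a morphism $X_i\to Y_i$ is a subquotient of both source and target; hence $\Hom_{\fC_i}(X_i,Y_i)=0$ as soon as $\bigoplus_j W_i^{\otimes a}\otimes V_i^{\otimes r_j}$ and $\bigoplus_k W_i^{\otimes b}\otimes V_i^{\otimes s_k}$ share no Jordan--H\"older constituent. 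Thus it suffices to prove: for each $a\not\equiv b\pmod 4$ and each $r,s\ge 0$, the $G_i$-modules $W_i^{\otimes a}\otimes V_i^{\otimes r}$ and $W_i^{\otimes b}\otimes V_i^{\otimes s}$ have disjoint sets of constituents for $\cU$-almost all $i$.

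Next I would remove $V$. The identification $A_i=\uEnd(W_i)$ recalled in \S\ref{ss:kriz} is $G_i$-equivariant, and the underlying $G_i$-module of the twisted group algebra $A_i$ is exactly the permutation module $V_i=k_i[E_i]$; hence $V_i\cong W_i\otimes W_i^*$. Therefore $W_i^{\otimes a}\otimes V_i^{\otimes r}\cong W_i^{\otimes(a+r)}\otimes(W_i^*)^{\otimes r}$, and the statement to prove becomes: for $p,q,p',q'\ge 0$ with $p-q\not\equiv p'-q'\pmod 4$, the modules $W_i^{\otimes p}\otimes(W_i^*)^{\otimes q}$ and $W_i^{\otimes p'}\otimes(W_i^*)^{\otimes q'}$ have disjoint constituents for $\cU$-almost all $i$. (If $-1$ were a square in $\bF$ then $W_i^*\cong W_i$ and only the mod-$2$ analogue would hold, which is why the hypothesis on $\bF$ enters — and it already indicates that the separating invariant is sensitive to the square class of $-1$, that is, to a Gauss sum.)

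The remaining step is a theorem about $\Sp(\bF)$ with the rank $d_i/2\to\infty$, and is where the work lies. The natural route is the theta correspondence for finite dual pairs: $W_i^{\otimes p}\otimes(W_i^*)^{\otimes q}$ is the restriction to $\Sp(E_i)$ of the Weil representation of $\Sp(E_i\otimes Q)$ for the quadratic space $Q=\langle 1\rangle^{\oplus p}\perp\langle -1\rangle^{\oplus q}$, so its constituents are the Howe lifts of the irreducible representations of $\mathrm O(Q)$. One must then show that every such lift carries a well-defined $\bZ/4$-valued invariant — presumably built from the Gauss sums attached to its Deligne--Lusztig datum, or transported from the orthogonal side — equal to $p-q\bmod 4$, so that incongruent net degrees force disjointness of constituents. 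I expect this to be the main obstacle; it is essentially the content of \cite{DeligneLetter1}. One further point to watch: when $k_i$ has positive characteristic $\fC_i$ is not semisimple, so the constituent-disjointness must be known in that generality too — ordinary disjointness need not survive reduction modulo a prime — and although the primes occurring tend to infinity (as $k$ has characteristic $0$), controlling them uniformly in the rank is a genuine, if secondary, complication.
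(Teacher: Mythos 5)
The paper does not actually give a proof of this theorem: Theorem~\ref{thm:mod4} is stated as a result of Deligne (\cite{DeligneLetter1}), with a pointer to \cite[\S 8]{Kriz}, and no argument appears in the text. Your proposal is therefore best compared not with a proof that does not exist, but with the paper's decision to treat the statement as a black box. Your formal reductions are correct: the ultraproduct characterization of $\Hom_{\tilde\fC}$ reduces the claim to $\Hom_{\fC_i}(X_i,Y_i)=0$ for almost all $i$; the observation that disjointness of Jordan--H\"older constituents forces vanishing of $\Hom$ is sound even in the non-semisimple case (a nonzero morphism has nonzero image, which is a common subquotient); and the identification $V_i \cong W_i \otimes W_i^*$ correctly translates the problem into the disjointness of constituents of $W_i^{\otimes p} \otimes (W_i^*)^{\otimes q}$ and $W_i^{\otimes p'} \otimes (W_i^*)^{\otimes q'}$ when $p-q \not\equiv p'-q' \pmod 4$. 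You then defer the decisive step to Deligne's letter, exactly as the paper does, so what you have is a faithful reduction rather than a new proof.

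Two smaller remarks. First, your worry about positive characteristic is not in play for Theorem~\ref{thm:mod4} as stated: \S\ref{ss:kriz} explicitly assumes $k_i = \bC$ and $t$ transcendental, so every $\fC_i$ is a semisimple complex representation category and constituent-disjointness is just disjointness of irreducible summands. The issue you raise is exactly the one the paper flags in Remark~\ref{rmk:comments} as the obstacle to extending the theorem to non-transcendental $t$, but it is orthogonal to the theorem as stated. Second, the theta-correspondence route you sketch for the core disjointness statement cannot be checked against the paper, since the paper gives no argument; it is a plausible strategy, but as you yourself note it is precisely the content that the paper attributes to \cite{DeligneLetter1}, so there is no basis in this text for either confirming or correcting it.
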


We thus see that $\fD'$ is a $\bZ/4$-graded tensor category with $\fD'_0=\fC$. The ultralimit of the isomorphisms $A_i=\uEnd(W_i)$ shows that $A=\uEnd(W)$ in $\fD'$. Thus $\fD'$ is naturally a 4-splitting category for $A$ as in \S \ref{ss:msplit}. The $\bZ/m$-version of Corollary~\ref{cor:uni-char} now shows that $\fD'$ is equivalent to the universal 4-splitting category $\fD$ constructed in \S \ref{ss:maincon}. Kriz proves that her category is equivalent to $\fD'$, so it is thus equivalent to $\fD$ as well.

\begin{remark} \label{rmk:comments}
If $-1$ is a square then Deligne shows that $\fD$ is $\bZ/2$-graded, and the above arguments show that it matches the $\bZ/2$-graded version of $\fD$ from \S \ref{ss:maincon}. In fact, Deligne constructs a $\bZ/2 \times \bZ/2$-graded category by making use of a second oscillator representation (obtained by using a different choice of $\psi$). This can be carried out using our constructions as well. Deligne's proof of Theorem~\ref{thm:mod4} uses the characteristic~0 representation theory of the $G_i$'s. To extend this theorem to general $t$'s would require analogous results in positive characteristic.
\end{remark}

\subsection{Non-trivial Brauer classes}

We maintain the assumptions that $t$ is transcendental and that $-1$ is not a square in $\bF$. We now show:

\begin{theorem}
The Azumaya algebra $A^{\otimes 2}$ is not split in $\fC$.
\end{theorem}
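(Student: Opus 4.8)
The plan is to assume $A^{\otimes 2}\cong\uEnd(U)$ for some rigid object $U$ of $\fC$ and to derive a contradiction with Deligne's orthogonality theorem (Theorem~\ref{thm:mod4}) by transporting the hypothetical splitting down to the finite groups $G_i$. So suppose such a $U$ exists. Since $\fC$ sits fully faithfully inside the ultraproduct category $\tilde\fC$, we may write $U=(U_i)$ for a representing sequence of finite-dimensional $k_i[G_i]$-modules, and the assumed isomorphism is represented, for almost all $i$, by algebra isomorphisms $A_i^{\otimes 2}\cong\uEnd(U_i)$ in $\fC_i=\Rep(G_i)$; here I use that $\uEnd$ and $\otimes$ are computed componentwise in $\tilde\fC$ and that being an isomorphism transfers.

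Next I would use the identification $A_i\cong\uEnd(W_i)$, where $W_i$ is the oscillator representation (recalled in \S\ref{ss:kriz}); this gives $A_i^{\otimes2}\cong\uEnd(W_i^{\otimes2})$, hence $\uEnd(U_i)\cong\uEnd(W_i^{\otimes2})$ as algebra objects of $\Rep(G_i)$ for almost all $i$. The key elementary step is then that an algebra isomorphism between the internal endomorphism algebras of two $G_i$-representations forces them to differ by a linear character: forgetting the $G_i$-action and applying Skolem--Noether one obtains a $k_i$-linear isomorphism $U_i\xrightarrow{\sim}W_i^{\otimes2}$ conjugating the two matrix-algebra structures, and comparing it with the two $G_i$-actions shows it intertwines $\rho_{U_i}$ with $\chi_i\otimes\rho_{W_i^{\otimes2}}$ for a unique $\chi_i\in\Hom(G_i,k_i^{\times})$. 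Thus $U_i\cong W_i^{\otimes2}\otimes\chi_i$.

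Now I would invoke that $\dim E_i\to\infty$ along the ultrafilter (assumption~(b) of \S\ref{ss:ultra-setup}) and that $q$ is odd: the groups $G_i=\Sp(E_i)$ are then perfect for almost all $i$, the only non-perfect finite symplectic group in odd characteristic being $\Sp_2(\bF_3)$, which is excluded since $\dim E_i$ is eventually at least $4$. Hence $\Hom(G_i,k_i^{\times})=0$ for almost all $i$, so $\chi_i$ is trivial and $U_i\cong W_i^{\otimes2}$. Taking the ultralimit yields an isomorphism $U\cong W^{\otimes2}$ in $\tilde\fC$, and both objects are nonzero since $\udim U=\pm t$ and $\udim W^{\otimes2}=t$ with $t\ne0$. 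But $U$ lies in $\fD'_0=\fC$ while $W^{\otimes2}$ lies in $\fD'_2$, and $0\ne2$ in $\bZ/4$, so Theorem~\ref{thm:mod4} forces $\Hom_{\tilde\fC}(U,W^{\otimes2})=0$, contradicting the isomorphism just produced. This proves the theorem; combined with Corollary~\ref{cor:Asplit} it shows that the class of $A$ in $\Br(\fC)$ has order exactly~$4$.

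I expect essentially all of the difficulty to be bookkeeping: the substantive input is Theorem~\ref{thm:mod4}, which we are free to assume, and the rest is either the standard equivariant Skolem--Noether argument of the second paragraph or routine ultraproduct manipulations. If any point requires care, it is making precise the correspondence ``isomorphism in $\tilde\fC$ $\leftrightarrow$ isomorphisms in $\fC_i$ for almost all $i$'' for the structured objects $A_i^{\otimes2}$, $\uEnd(U_i)$, and $W_i^{\otimes2}$, but this is routine for the ultraproduct model of $\uRep(\Sp_t(\bF))$ in use here.
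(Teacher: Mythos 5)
Your argument is correct and is essentially the same proof as the paper's: represent the hypothetical splitting as a sequence of componentwise isomorphisms $A_i^{\otimes 2}\cong\uEnd(U_i)$, use $A_i\cong\uEnd(W_i)$ and the equivariant Skolem--Noether argument (which the paper isolates as a lemma) to conclude $U_i\cong W_i^{\otimes 2}\otimes\chi_i$, kill $\chi_i$ using that $\Sp(E_i)$ has trivial abelianization, and then contradict Deligne's orthogonality theorem (Theorem~\ref{thm:mod4}) since the two sides live in different $\bZ/4$-graded pieces. Your write-up is marginally more explicit on why $G_i$ is perfect for almost all $i$ and on the degree bookkeeping ($0$ vs.\ $2$ in $\bZ/4$), but the route and every substantive step coincide with the paper's.
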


\begin{proof}
Suppose we have a splitting $A^{\otimes 2} \cong \uEnd(X)$ for some rigid object $X$ of $\fC$. We thus have $A_i^{\otimes 2} \cong \uEnd(X_i)$ for all $i$ belonging to some set $I'$ in our ultrafilter. For all $i$, we have a matural isomorphism $A_i = \uEnd(W_i)$, and so $A_i^{\otimes 2} = \uEnd(W_i^{\otimes 2})$. The following lemma, combined with the fact that $G_i$ has trivial abelianzation, implies that $X_i \cong W_i^{\otimes 2}$ in $\fC_i$, and so $X \cong W^{\otimes 2}$ in $\fD'$. But this contradicts Theorem~\ref{thm:mod4}, since $X$ is in degree~0 and $W$ is in degree~1.
\end{proof}

\begin{lemma}
Let $\Gamma$ be a group and let $V$ and $W$ be finite dimensional complex representations such that $\End(V) \cong \End(W)$ as $\Gamma$-algebras. Then there is a one-dimensional representation $L$ of $\Gamma$ such that $V \cong L \otimes W$.
\end{lemma}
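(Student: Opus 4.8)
The plan is to deduce this from the Skolem--Noether theorem together with Schur's lemma. Write $\rho_V\colon\Gamma\to\GL(V)$ and $\rho_W\colon\Gamma\to\GL(W)$ for the two representations; the $\Gamma$-actions on $\End(V)$ and $\End(W)$ are by conjugation, and let $f\colon\End(V)\to\End(W)$ be the given $\Gamma$-equivariant algebra isomorphism. First I would observe that, forgetting the $\Gamma$-action, an algebra isomorphism $\End(V)\cong\End(W)$ forces $\dim_{\bC}V=\dim_{\bC}W=:n$, since $\End(V)\cong\rM_n(\bC)$ as $\bC$-algebras and $\rM_n(\bC)\cong\rM_m(\bC)$ only when $n=m$. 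Fixing any $\bC$-linear identification $V\cong W$ turns $f$ into an automorphism of $\rM_n(\bC)$, which by Skolem--Noether is inner; composing with the chosen identification then yields a $\bC$-linear isomorphism $\phi\colon V\to W$ with $f(a)=\phi\circ a\circ\phi^{-1}$ for every $a\in\End(V)$. (Equivalently: pull back the tautological simple $\End(W)$-module $W$ along $f$; since it has $\bC$-dimension $n$ it is a simple $\End(V)$-module, hence isomorphic to $V$, and one takes $\phi$ to be such an isomorphism.)

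Next I would feed in the $\Gamma$-equivariance. For $\gamma\in\Gamma$ this reads $f(\rho_V(\gamma)\,a\,\rho_V(\gamma)^{-1})=\rho_W(\gamma)\,f(a)\,\rho_W(\gamma)^{-1}$ for all $a\in\End(V)$. Substituting $f=\phi(-)\phi^{-1}$ and conjugating the resulting identity back to $\End(V)$ by $\phi$, one finds that the linear automorphism $\phi^{-1}\circ\rho_W(\gamma)^{-1}\circ\phi\circ\rho_V(\gamma)$ of $V$ commutes with every element of $\End(V)$. Since the centre of $\End(V)$ consists of scalars, this automorphism equals $c(\gamma)\cdot\id_V$ for a unique $c(\gamma)\in\bC^{\times}$; that is, $\phi\circ\rho_V(\gamma)\circ\phi^{-1}=c(\gamma)\,\rho_W(\gamma)$ in $\GL(W)$.

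Finally, comparing this relation for a product $\gamma_1\gamma_2$ with the product of the relations for $\gamma_1$ and $\gamma_2$, and using that $\rho_V$ and $\rho_W$ are homomorphisms, shows $c\colon\Gamma\to\bC^{\times}$ is a homomorphism. Letting $L$ be the one-dimensional representation with character $c$, the map $v\mapsto 1_L\otimes\phi(v)$ is then a $\Gamma$-equivariant isomorphism $V\to L\otimes W$, since $\gamma\cdot(1_L\otimes\phi(v))=c(\gamma)\cdot(1_L\otimes\rho_W(\gamma)\phi(v))=1_L\otimes\phi(\rho_V(\gamma)v)$. None of these steps is a serious obstacle; the only points needing a little care are the initial reduction to $\dim V=\dim W$ (which is what makes the pulled-back module genuinely simple, so that Skolem--Noether applies) and keeping track of whether the character that emerges is $c$ or $c^{-1}$, which is purely a matter of conventions.
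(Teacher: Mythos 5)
Your proof is correct and is essentially the same as the paper's: both invoke Skolem--Noether to realize the algebra isomorphism as conjugation by a linear isomorphism $\phi\colon V\to W$, then use $\Gamma$-equivariance plus the fact that the center of $\End(V)$ is $\bC$ to extract a scalar-valued function $c(\gamma)$, which is checked to be a homomorphism and yields the twisting line $L$. The extra remarks you include (the forced equality of dimensions and the alternative phrasing of Skolem--Noether via the simple module) are fine but do not change the argument.
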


\begin{proof}
Let $\rho \colon \Gamma \to \GL(V)$ and $\sigma \colon \Gamma \to \GL(W)$ be the representations, and let $\phi \colon \End(V) \to \End(W)$ be the given isomorphism. By Noether--Skolem, $\phi$ is induced by a linear map $\psi \colon V \to W$, i.e., we have $\phi(\alpha)=\psi \alpha \psi^{-1}$ for $\alpha \in \End(V)$. Since $\phi$ is $\Gamma$-equivariant, we find
\begin{displaymath}
\sigma(g) \psi \alpha \psi^{-1} \sigma(g)^{-1} = \sigma(g) \phi(\alpha) \sigma(g)^{-1} = \phi(\rho(g) \alpha \rho(g)^{-1}) = \psi \rho(g) \alpha \rho(g)^{-1} \psi^{-1}.
\end{displaymath}
Thus $\psi^{-1} \sigma(g)^{-1}\psi \rho(g)$ commutes with $\alpha$ for all $\alpha$, and is therefore a scalar $\lambda(g)$. We thus have $\rho(g) = \lambda(g) \psi^{-1} \sigma(g) \psi$. One easily sees that $\lambda$ is a group homomorphism, and so the result follows.
\end{proof}

Let $\Br(\fC)$ denote the Brauer group of $\fC$, as defined in \cite{Pareigis4}.

\begin{corollary} \label{cor:order4}
The class $[A]$ in $\Br(\fC)$ generates a cyclic subgroup of order~4.
\end{corollary}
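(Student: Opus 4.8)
The plan is simply to combine the two immediately preceding results with the basic structure of the Brauer group. Recall from \S\ref{ss:azumaya} (following \cite{Pareigis4}) that $\Br(\fC)$ is built from Azumaya algebras with the group operation induced by $\otimes$, and that the split algebras $\uEnd(V)$ represent the identity class. Thus in $\Br(\fC)$ we have $[A^{\otimes n}] = [A]^n$ for all $n \ge 1$, and $[B] = 1$ whenever $B$ is split.

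First I would invoke Corollary~\ref{cor:Asplit}(b): since $-1$ is not a square in $\bF$ (our standing assumption in this subsection), $A^{\otimes 4}$ is S-split, hence split, hence $[A]^4 = [A^{\otimes 4}] = 1$. Therefore the order of $[A]$ in $\Br(\fC)$ divides $4$. Next I would invoke the theorem just proved, that $A^{\otimes 2}$ is not split in $\fC$; this gives $[A]^2 = [A^{\otimes 2}] \ne 1$, so the order of $[A]$ does not divide $2$. An integer dividing $4$ but not $2$ must equal $4$, so $[A]$ has order exactly $4$ and the cyclic subgroup it generates has order $4$.

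The main obstacle here is not in this corollary at all — it is entirely absorbed into the preceding theorem (non-splitness of $A^{\otimes 2}$), which rests on Deligne's Theorem~\ref{thm:mod4} and the comparison with Kriz's category. Once that input is in hand, the only things to check are the formal properties of $\Br(\fC)$ quoted above, which are standard. So this proof is essentially a one-line bookkeeping argument; I would present it as such, perhaps adding a remark that the two halves use, respectively, Corollary~\ref{cor:Asplit}(b) for the upper bound $[A]^4 = 1$ and the preceding theorem for the lower bound $[A]^2 \ne 1$.
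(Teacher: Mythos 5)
Your proof is correct and is exactly the paper's argument: the paper's one-line proof reads ``The theorem shows that $[A]$ is not 2-torsion, while Corollary~\ref{cor:Asplit} shows that $[A]$ is 4-torsion.'' You have simply spelled out the same bookkeeping in slightly more detail.
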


\begin{proof}
The theorem shows that $[A]$ is not 2-torsion, while Corollary~\ref{cor:Asplit} shows that $[A]$ is 4-torsion.
\end{proof}

\begin{remark}
As in Remark~\ref{rmk:comments}, if $-1$ is a square then we can construct a subgroup of $\Br(\fC)$ isomorphic to the Klein 4-group. We also expect that these results will extend to more general values of $t$ (at least all non-singular values, but perhaps all $t \ne 0$).
\end{remark}

\begin{remark}
The categorical dimensions of objects of $\fC$ belong to $\bQ(t) \subset k$, though this statement is not yet in the literature. Granted this, it is obvious that $A$ is not split: indeed, if $A \cong \uEnd(V)$ then taking categorical dimensions we would find $t=(\udim{V})^2$, a contradiction. The fact that $A^{\otimes 2}$ is non-split does not seem so immediate.
\end{remark}

\section{Interpolation via oligomorphic groups} \label{s:oligo}

\subsection{Background}

We very briefly recall some of the theory from \cite{repst}. We refer to \cite[\S 2]{line} for a more substantial summary. We let $k$ be an algebraically closed field of characteristic~0 throughout.

Let$(G, \Omega)$ be an oligomorphic group. This means that $G$ is a permutation group of $\Omega$, and $G$ has finitely many orbits on $\Omega^n$ for all $n \ge 0$. For a finite subset $A$ of $\Omega$, let $G(A)$ be the subgroup of $G$ fixing each element of $A$. These subgroups form a neighborhood basis for a topology on $G$. We say that an action of $G$ on a set $X$ is \defn{smooth} if each stabilizer is an open subgroup, meaning it contains some $G(A)$, and \defn{finitary} if $G$ has finitely many orbis on $X$. We use the term ``$G$-set'' to mean ``set equipped with a smooth and finitary action of $G$.'' See \cite[\S 2]{repst} for more details.

Let $X$ be a $G$-set. Let $\cC(X)$ denote the \defn{Schwartz space}, i.e., the space of smooth functions $\phi \colon X \to k$, meaning $\phi$ is stable under left translation by some open subgroup. One of the key ideas introduced in \cite{repst} was a certain notion of a \defn{measure} on $G$ (\cite[Definition~3.1]{repst}); this assigns a value in $k$ to each $G$-set (in fact, to each $\hat{G}$-set). Fixing such a measure $\mu$, we can define the integral
\begin{displaymath}
\int_X \phi(x) dx
\end{displaymath}
for $\phi \in \cC(X)$. This integral is in fact a finite sum. See \cite[\S 4]{repst} for details.

We define a tensor category $\uPerm(G, \mu)$ as follows. The objects are the Schwartz spaces $\cC(X)$, where $X$ is a $G$-set. A morphism $\cC(X) \to \cC(Y)$ is given by a $G$-invariant $Y \times X$ matrix, meaning a $G$-invariant function $Y \times X \to k$. Such a matrix defines an actual linear function $\cC(X) \to \cC(Y)$, and composition is just function composition. The direct sum $\oplus$ and tensor product $\uotimes$ are given on objects by
\begin{displaymath}
\cC(X) \oplus \cC(Y) = \cC(X \amalg Y), \qquad
\cC(X) \uotimes \cC(Y) = \cC(X \times Y).
\end{displaymath}
See \cite[\S 8]{repst} for details. We note that $\oplus$ is the usual direct sum on the underlying vector spaces, while $\uotimes$ is not the usual tensor product, which is why we decorate the symbol. The tensor category $\uPerm(G, \mu)$ is rigid, and the objects $\cC(X)$ are self-dual. It is an important and difficult problem to determine when $\uPerm(G, \mu)$ admits a pre-Tannakian envelope; see \cite[Theorem~13.2]{repst} for one result in this direction.

\subsection{Symplectic spaces} \label{ss:sympG}

Fix a finite field $\bF$ of odd characteristic. We define a \defn{symplectic $G$-space} to be $\bF$-vector space $V$ equipped with a linear action of $G$ and an alternating form $\langle, \rangle$ such that:
\begin{itemize}
\item $V$ is a $G$-set, i.e., the action is finitary and smooth.
\item The form is $G$-invariant, i.e., $\langle gv, gw \rangle = \langle v, w \rangle$ for $v,w \in V$ and $g \in G$.
\item The form is non-degenerate in the sense that it has no kernel, i.e., if $\langle v, w \rangle=0$ for all $w$ then $v=0$.
\end{itemize}
In the main case of interest, $V$ will have dimension $\aleph_0$, and so the non-degeneracy condition does not imply that the map $V \to V^*$ is an isomorphism.

\subsection{Twisted convolution algebras} \label{ss:oligo-alg}

Let $V$ be a $G$-symplectic space, and fix a non-trivial additive character $\psi \colon \bF \to k^{\times}$. We define an algebra $A=A(V)$ as follows. The underlying vector space is $\cC(V)$, the space of Schwartz functions on $V$. The product $\ast$ is the following twisted convolution:
\begin{displaymath}
(\phi_1 \ast \phi_2)(x) = \int_V \psi(\langle y, x-y \rangle) \phi_1(y) \phi_2(x-y) dy
\end{displaymath}
One easily sees (using Fubini's theorem \cite[Corollary~4.9]{repst}) that this product is associative, and that the point mass $\delta_0$ at the origin is the unit. As a special case, we find
\begin{displaymath}
\delta_x \ast \delta_y = \psi(\langle x, y \rangle) \delta_{x+y},
\end{displaymath}
for $x,y \in V$; we note that maps of Schwartz spaces are determined by where they send point masses, so the above formula uniquely characterizes the product. It is clear that the product is induced by a $(V \times V) \times V$ matrix, and so the multiplication map is a map in the category $\uPerm(G, \mu)$. We thus see that $A$ is an algebra object in the category $\uPerm(G, \mu)$.

We now assume $\mu(V) \ne 0$, and let $\gamma$ be a chosen square root of $\mu(V)$. Define $\pi \in \cC(V \oplus V)$ and $\epsilon \colon \cC(V) \to \bone$ by
\begin{displaymath}
\pi(x,y)=\gamma^{-1} \cdot \delta_0(x+y), \qquad \epsilon(\phi)=\gamma \cdot \phi(0).
\end{displaymath}
One shows that $(\pi, \epsilon)$ is an S-structure on $A$, by essentially the same argument used to prove Proposition~\ref{prop:tgas}. Similarly, the argument from Proposition~\ref{prop:Asplit} shows that $A^{\otimes 2}$ is S-split if $-1$ is a square in $\bF$, and $A^{\otimes 4}$ is always S-split.

\subsection{The symplectic group}

We now specialize to the infinite symplectic group. Let $\bV_i$ be a two-dimensional symplectic vector space over $\bF$ with symplectic basis $e_i, f_i$, and let $\bV=\bigoplus_{i \ge 1} \bV_i$, equipped with the natural symplectic form. We let $G=\Sp(\bV)$ be the group of all linear automorphisms of $\bV$ preserving the form. This group is oligomorphic via its action on $\bV$, and $\bV$ is a symplectic $G$-space as defined in \S \ref{ss:sympG}.

The group $G$ carries a 1-parameter family of measures $\mu_t$. Fix a choice of parameter $t \in k \setminus \{0\}$. The category $\uPerm(G, \mu_t)$ admits a pre-Tannakian abelian envelope $\fC=\uRep(G, \mu_t)$, which is semi-simple if $t$ is non-singular (as in \S \ref{ss:ultra-setup}). This category is equivalent to the ultraproduct category from \S \ref{ss:ultra-setup}. These statements do not yet appear in the literature, but can be proven similarly to the $\GL(\bV)$ case, which is treated in detail in \cite[\S 15]{repst}.

Applying the construction from \S \ref{ss:oligo-alg}, we obtain an S-algebra $A=A(\bV)$ in $\fC$. We now form the universal splitting category of $A$ (or its $\bZ/2$ or $\bZ/4$ variant) to obtain a category that is once again equivalent to Kriz's.

\end{document}